\newtheorem*{theorem*}{Theorem}
\newtheorem{prop*}{Proposition}
\newtheorem{theorem}{Theorem}
\numberwithin{theorem}{section} 
\numberwithin{equation}{section}
\newtheorem{corollary}[theorem]{Corollary}
\newtheorem{prop}[theorem]{Proposition} 
\theoremstyle{definition}
\numberwithin{example}{section}
\newtheorem{lemma}[theorem]{Lemma}
\theoremstyle{definition}
\newtheorem{obs}{Remark}
\theoremstyle{definition}
\newtheorem{definition}[theorem]{Definition}
\newcommand{\R}{\mathbb{R}}
\newcommand{\Z}{\mathbb{Z}}
\newcommand{\N}{\mathbb{N}}
\newcommand{\T}{\mathbb{T}}
\newcommand{\C}{\mathbb{C}}
\newcommand{\Tr}{\operatorname{Tr}}
\newcommand{\HS}{\operatorname{HS}}
\renewcommand{\S}{\mathbb{S}^3}
\title[Directional Poincaré inequality on compact Lie groups]{Directional Poincaré inequality on compact Lie groups}
\newcommand{\defeq}{\vcentcolon=}
\author[P. L. Dattori da Silva]{Paulo L. Dattori da Silva} 
\address{
  Departamento de Matemática,
 Instituto de Ciências Matemáticas e de Computação,
 Universidade de São Paulo, São Paulo, Brazil
  }
\email{dattori@icmc.usp.br}
\author[A. Kowacs]{André Pedroso Kowacs} 
\address{
  Departamento de Matemática,
 Instituto de Ciências Matemáticas e de Computação,
 Universidade de São Paulo, São Paulo, Brazil
  }
\email{andrekowacs@gmail.com}
\subjclass{Primary: 35A23, 35A01. Secondary: 43A77, 22E30}
\keywords{Poincaré inequality, compact Lie groups, Fourier analysis, vector field, global solvability}
\date{\today}
\begin{document}

\begin{abstract}
    We extend the directional Poincaré inequality on the torus, introduced by Steinerberger  in [Ark. Mat. 54 (2016), pp. 555--569], to the setting of compact Lie groups. We provide necessary and sufficient conditions for the existence of such an inequality based on estimates on the eigenvalues of the global symbol of the corresponding vector field. 
    We also prove that such refinement of the Poincaré  inequality holds for a left-invariant vector field on a compact Lie group $G$ if and only if the vector field is globally solvable, and extend this equivalence to tube-type vector fields on $\T^1\times G$.
\end{abstract}
\maketitle

\section{Introduction}

In \cite{directional_Poincare}, Steinerberger introduced the notion of a refinement for the classical Poincaré inequality on the torus, called the {\it directional Poincaré inequality}, and proved that it holds for an uncountable but Lebesgue-null set of directions $\alpha$. More precisely, he showed that there is a set $\mathcal{B}\subset\mathbb{R}^n$ of directions such that for every $\alpha\in\mathcal{B}$ there is $c_\alpha>0$ so that
\begin{equation}\label{DPI_intro}
	\|\nabla f\|_{L^2(\mathbb{T}^n)}^{n-1}\| \langle \nabla f,\alpha\rangle \|_{L^2(\mathbb{T}^n)}\geq c_\alpha\|f\|_{L^2(\mathbb{T}^n)}^n, 
\end{equation}
for every $f\in H^1(\mathbb{T}^n)$ with mean-value zero, and if $n\geq2$ then $\mathcal{B}$ is uncountable but Lebesgue-null. Here, and for the rest of this paper, $\T^n=\R^n/(2\pi\Z)^n$ denotes the $n$-dimensional torus.

As the torus is a compact Lie group, and the directional derivative $\langle \nabla ,\alpha\rangle=\sum \alpha_j\partial_{x_j}$ is a (left-)invariant vector field under its own group action, a natural question to ask is if an analogue of such an inequality can also hold for left-invariant vector fields on other compact Lie groups.
In this paper we show that in the same formulation, the answer to this question is negative. In fact, we prove that (see Section \ref{sect_nec_suff}, Corollary \ref{coro_poincare_implies_torus}):

\emph{  Let $Y=\langle \nabla_{G},\alpha\rangle$ be a left-invariant real vector field on a compact Lie group $G$ and $\delta\geq 1$. If there exists $c>0$ such that 
    \begin{equation*}
        \|\nabla_{G} f\|_{L^2(G)}^{\delta-1}\| Yf\|_{L^2(G)}\geq c\|f\|_{L^2(G)}^\delta,
    \end{equation*}
    for every $f\in H^1(G)$ with mean-value zero, then $G$ is a torus.
}

 In summary, this follows from the fact that under the conditions stated above, 
 the vector field $Y$ must be globally hypoelliptic, and 
 and so a result from Greenfield and Wallach in  \cite{GW_GH_vector_fields} 
 allows us to conclude that $G$ is a torus.

Hence, in order to obtain a similar  directional Poincaré inequality for invariant vector fields on compact Lie groups, we have to look for another subspace of $H^1(G)$ where the inequality may hold. One can easily verify that no similar inequality can hold in a subspace which intersects the kernel of $Y:H^1(G)\to L^2(G)$, hence the largest subspace where this inequality might hold is $(\ker Y)^\perp\subset H^1(G)$. We prove that a similar inequality indeed can exist as long as the symbol of the vector field $Y$ satisfies certain conditions. Our main result reads as follows (see Section \ref{sect_nec_suff}, Theorem \ref{theo_directional_poincare_general}).

\emph{
    Let $Y=\langle \nabla_{G},\alpha\rangle$ be a left-invariant real vector field on a compact Lie group $G$ and $\delta\geq 1$. There exists $c>0$ such that 
\begin{equation*}
        \|\nabla_{G} f\|_{L^2(G)}^{\delta-1}\| Yf\|_{L^2(G)}\geq c\|f\|_{L^2(G)}^\delta,
    \end{equation*}
    for every $f\in (\ker Y|_{H^1(G)})^\perp$, if and only if there exists $C>0$ such that
\begin{equation*}
        \lambda_{\min}^{>0}[\sigma_Y(\xi)]\geq C\langle \xi \rangle^{-(\delta-1)},
    \end{equation*}
    for all $[\xi]\in\widehat{G}$ such that $\sigma_Y(\xi)\neq 0$, where $\lambda_{\min}^{>0}[\sigma_{Y}(\xi)]$ denotes the smallest non-zero  singular value of $\sigma_Y(\xi)$.
}

As an application, in Section \ref{sect_solvability} we provide characterizations of the global solvability of left-invariant continuous linear operators (Fourier multipliers) on compact Lie groups in terms of the existence of the associated directional Poincaré inequality and, equivalently, on lower bounds for the decay of the smallest non-zero singular values of its symbol.

Finally, inspired  by the connection to global solvability mentioned above,  in Section \ref{sect_tube} we prove that the existence of a directional Poincaré inequality for tube-type vector fields on a product $\T^1\times G$ is also equivalent to the global solvability of the corresponding vector field. In particular, we extend the directional Poincaré inequality on $\T^2$ to all real vector fields on $\T^2$ with a non-vanishing coefficient, in the following result (see Section \ref{sect_tube}, Corollary \ref{coro_tube_t2}):

\emph{
     Let $a\in C^\infty(\T^1)$ be real-valued and consider the vector field on $\T^2$ given by
     \begin{equation*}
         Y=\partial_t+a(t)\partial_x.
     \end{equation*}
     There exist $\delta\geq 2$ and  $c_a>0$ such that
          \begin{equation*}
        \|\nabla_{\T^2} f\|_{L^2(\T^2)}^{\delta-1}\| Y f \|_{L^2(\T^2)}\geq c_a\|f\|_{L^2(\T^2)}^\delta,
    \end{equation*}
    for every $f\in H^1(\T^2)$ with mean-value zero, if and only if 
     $a_0=\frac{1}{2\pi}\int_0^{2\pi} a(t)dt$ is an irrational non-Liouville number. Moreover: in this case we can take $\delta$ greater than the irrationality measure of $a_0$, or equal to $2$ if $a_0$ is algebraic of degree $2$. 
}

We believe these results indicate that this equivalence between the directional Poincaré inequality and global solvability might also hold for more general classes of vector fields on the torus and on other manifolds and, if true, could be an useful tool for the study of global solvability in the future.

\section{Preliminaries}\label{sect_pre}

Let $G$ be a (connected) compact Lie group, and denote by $\widehat{G}$ its unitary dual, that is, the set comprised of equivalence classes of continuous irreducible unitary representations of $G$. From the fact that $G$ is compact, any $\xi\in [\xi]\in\widehat{G}$ is finite dimensional;  we denote its dimension by $d_\xi$ and assume $\xi$ to be matrix-valued. From the Peter-Weyl Theorem, the collection of coefficient functions of representatives of elements in $\widehat{G}$ form an orthogonal Schauder basis for $L^2(G)$, with respect to the (normalized) Haar measure on $G$. Hence, by considering the matrix-valued Fourier coefficients for $f\in L^2(G)$ as defined in \cite{RT2010_book}, and given by  
\begin{equation*}
    \widehat{f}(\xi)\vcentcolon= \int_G f(x)\xi(x)^*dx\in\mathbb{C}^{d_\xi\times d_\xi}, 
\end{equation*}
for every $ [\xi]\in\widehat{G}$, we have the Fourier inversion formula:
\begin{equation*}
    f(x)=\sum_{[\xi]\in\widehat{G}} d_\xi\Tr\left(\xi(x)\widehat{f}(\xi)\right),
\end{equation*}
with convergence for almost every $x$ in $G$, as well as in $L^2(G)$, where in the sum above we choose exactly one representative of each class in $\widehat{G}$, and we maintain this convention throughout the paper. This also yields the analogue of Plancherel's Theorem and Parseval's Formula:
\begin{equation}\label{eq_Plancherel}
    \|f\|_{L^2(G)}=\left(\sum_{[\xi]\in \widehat{G}}d_\xi\|\widehat{f}(\xi)\|_{\HS}^2\right)^{\frac{1}{2}},\quad \langle f,g\rangle_{L^2(G)}=\sum_{[\xi]\in\widehat{G}}d_\xi\Tr\Big(\widehat{f}(\xi)\widehat{g}(\xi)^*\Big)
\end{equation}
for any $f,g\in L^2(G)$, where $\|A\|_{\HS}$ denotes the Hilbert-Schmidt norm of a complex matrix $A$, given by $\sqrt{\Tr(A^*A)}=\sqrt{\sum_{i,j}|A_{ij}|^2}$. 

Let $\mathcal{D}'(G)$ denote the set of distributions on $G$, that is, the set of continuous linear functionals on $C^\infty(G)$. Then its matrix-valued  Fourier coefficients are defined by
\begin{equation*}
    \widehat{u}(\xi)\vcentcolon=\langle u,\xi^*\rangle\in\mathbb{C}^{d_\xi\times d_\xi},
\end{equation*}
for every $ [\xi]\in\widehat{G}$, where the action of $u$ should be interpreted  coefficient-wise, and the brackets denote the distribution-function duality.

For any continuous linear operator $P:C^\infty(G)\to C^\infty(G)$, its matrix-valued global symbol is defined by
\begin{equation*}
    \sigma_P(x,\xi)\vcentcolon= \xi(x)^*P\xi(x),
\end{equation*}
for every $(x,[\xi])\in G\times \widehat{G}$. This symbols yields the quantisation formula:
\begin{equation}\label{eq_quant_formula}
    Pf(x)=\sum_{[\xi]\in\widehat{G}}d_\xi \Tr\left(\xi(x)\sigma_P(x,\xi)\widehat{f}(\xi)\right),
\end{equation}
for every $x\in G$, $f\in C^\infty(G)$. 

If $P:C^\infty(G)\to C^\infty(G)$ is a continuous linear operator which also commutes with left-translations by the group action on itself (left-invariant), then its symbol $\sigma_P(x,\xi)$ does not depend on the variable $x$ and we denote it by  $\sigma_P(\xi)$. In this case, the quantisation formula \eqref{eq_quant_formula}, along with the Fourier inversion formula implies that $\widehat{Pf}(\xi)=\sigma_P(\xi)\widehat{f}(\xi)$, and thus we call it a Fourier multiplier.

Next, let $\mathcal{L}_G$ denote the positive Laplace-Beltrami operator corresponding to the unique (up to a constant) bi-invariant Riemmanian metric on $G$. Then for every $\xi\in[\xi]\in\widehat{G}$, its coefficient functions are eigenfunctions of $\mathcal{L}_G$ with same eigenvalue denoted by $\nu_\xi=\nu_{[\xi]}\geq 0$. Note that $\nu_\xi=0$ if and only if $\xi={\bf1_G}$, the trivial representation on $G$. Setting the weight
\begin{equation*}
    \langle \xi\rangle\vcentcolon=\sqrt{1+\nu_\xi},
\end{equation*}
we have that 
\begin{equation}\label{def_C1}
    \nu_\xi \leq \langle \xi \rangle^2 \leq C_1\nu_\xi,
\end{equation}
for every non-trivial $[\xi]\in\widehat{G}$,  where $C_1= 1+\frac{1}{\nu_\eta}$, and $\nu_\eta$ denotes the smallest non-zero eigenvalue of $\mathcal{L}_G$ (recall that its spectrum is discrete). 

For $s\in\R$, the Sobolev spaces $H^s(G)$ are then defined as the set of all $u\in\mathcal{D}'(G)$ with finite Sobolev norm:
\begin{equation*} \|u\|_{H^s(G)}\vcentcolon=\left(\sum_{[\xi]\in\widehat{G}}d_\xi \langle \xi\rangle^{2s} \|\widehat{u}(\xi)\|_{\HS}^2\right)^{\frac{1}{2}}.
\end{equation*}
We have that 
\begin{equation*}
     \bigcap_{s\in\R} H^s(G)=C^\infty(G) \text{ and } \bigcup_{s\in\R} H^s(G)=\mathcal{D}'(G),
\end{equation*}
and consequently the following characterization of smooth functions and distributions on $G$ holds:
\begin{prop}\label{prop_charact_smooth_and_dis}
    Let $({v}(\xi))_{[\xi]\in\widehat{G}}$ denote a sequence of matrices such that $v(\xi)\in \C^{d_\xi\times d_\xi}$ for every $[\xi]\in\widehat{G}$. Then $v(\cdot)=\widehat{f}(\cdot)$ for a smooth function $f\in C^\infty(G)$ if and only if for every $N>0$, there exists $C_N>0$ such that
    \begin{equation*}
        \|v(\xi)\|_{\HS}\leq C_N\langle \xi\rangle^{-N},
    \end{equation*}
    for every $[\xi]\in\widehat{G}$. Moreover, $v(\cdot)=\widehat{u}(\cdot)$ for a distribution $u\in \mathcal{D}'(G)$ if and only if there exist $C,N>0$ such that
    \begin{equation*}
         \|v(\xi)\|_{\HS}\leq C\langle \xi\rangle^{N},
    \end{equation*}
     for every $[\xi]\in\widehat{G}$. 
\end{prop}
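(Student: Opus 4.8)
The plan is to reduce both equivalences to the definition of the Sobolev norms $\|\cdot\|_{H^s(G)}$, the two identities $\bigcap_{s\in\R}H^s(G)=C^\infty(G)$ and $\bigcup_{s\in\R}H^s(G)=\mathcal D'(G)$ recorded above, and one elementary summability fact about $\widehat G$.

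\emph{Forward implications.} If $v(\cdot)=\widehat f(\cdot)$ with $f\in C^\infty(G)$, then $f\in H^N(G)$ for every $N>0$, so, since a convergent series of nonnegative terms dominates each of its terms, the definition of $\|\cdot\|_{H^N(G)}$ gives $d_\xi\langle\xi\rangle^{2N}\|v(\xi)\|_{\HS}^2\le\|f\|_{H^N(G)}^2$ for every $[\xi]\in\widehat G$; as $d_\xi\ge 1$ this is exactly $\|v(\xi)\|_{\HS}\le\|f\|_{H^N(G)}\langle\xi\rangle^{-N}$, i.e.\ the claimed bound with $C_N=\|f\|_{H^N(G)}$. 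Likewise, if $v(\cdot)=\widehat u(\cdot)$ with $u\in\mathcal D'(G)$, then (using that the $H^s(G)$ are nested) $u\in H^{-m}(G)$ for some $m\ge0$, and the same term-by-term estimate yields $\|v(\xi)\|_{\HS}\le\|u\|_{H^{-m}(G)}\langle\xi\rangle^{m}$, i.e.\ the claimed bound with $C=\|u\|_{H^{-m}(G)}$ and $N=m$.

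\emph{Reverse implications.} The one nontrivial ingredient is that there is an exponent $q=q(\dim G)>0$ with $\sum_{[\xi]\in\widehat G}d_\xi\langle\xi\rangle^{-q}<\infty$; indeed $\sum_{[\xi]}d_\xi^2\langle\xi\rangle^{-q}$ is the trace of $(1+\mathcal L_G)^{-q/2}$, which is finite once $q>\dim G$ by Weyl's law for $\mathcal L_G$ (see \cite{RT2010_book}), and $d_\xi\le d_\xi^2$. Fix such a $q$. Assume first that for every $N$ there is $C_N$ with $\|v(\xi)\|_{\HS}\le C_N\langle\xi\rangle^{-N}$. Given $s\in\R$, pick $N\ge s+q/2$, so that $\langle\xi\rangle^{2s-2N}\le\langle\xi\rangle^{-q}$ (recall $\langle\xi\rangle\ge1$); then
\begin{equation*}
\sum_{[\xi]\in\widehat G}d_\xi\langle\xi\rangle^{2s}\|v(\xi)\|_{\HS}^2\;\le\;C_N^2\sum_{[\xi]\in\widehat G}d_\xi\langle\xi\rangle^{2s-2N}\;\le\;C_N^2\sum_{[\xi]\in\widehat G}d_\xi\langle\xi\rangle^{-q}\;<\;\infty .
\end{equation*}
Hence $f(x)\defeq\sum_{[\xi]}d_\xi\Tr(\xi(x)v(\xi))$ converges in $H^s(G)$ for every $s$, so $f\in C^\infty(G)$; and since for fixed $\eta$ the functional $g\mapsto\widehat g(\eta)$ is continuous on $H^s(G)$ (by \eqref{eq_Plancherel}), while the Peter--Weyl orthogonality relations give $\widehat{g_F}(\eta)=v(\eta)$ for every partial sum $g_F$ over a finite set $F\ni[\eta]$, passing to the limit gives $\widehat f(\eta)=v(\eta)$. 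If instead $\|v(\xi)\|_{\HS}\le C\langle\xi\rangle^{N}$ for all $[\xi]$, the same computation with any $s\le -N-q/2$ shows the displayed sum is finite, so $v$ defines an element of $H^s(G)\subset\mathcal D'(G)$, and the same limiting argument identifies its Fourier coefficients with the $v(\xi)$.

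\emph{Main obstacle.} There is essentially no obstacle here: the argument is bookkeeping with the Sobolev norms, the only genuinely used input being the summability $\sum_{[\xi]}d_\xi\langle\xi\rangle^{-q}<\infty$ for $q$ large — equivalently, the polynomial control of the ``size'' of $\widehat G$ by the weight $\langle\xi\rangle$ — which is standard and available in \cite{RT2010_book}.
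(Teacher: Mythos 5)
Your proof is correct. The paper itself gives no argument for this proposition: it is stated as a standard fact, with the reader referred to \cite{RT2010_book}, and your write-up is exactly the standard argument that reference (and the identities $\bigcap_s H^s(G)=C^\infty(G)$, $\bigcup_s H^s(G)=\mathcal{D}'(G)$ quoted in the paper) relies on — term-by-term domination by the Sobolev norms in one direction, and in the other the summability $\sum_{[\xi]}d_\xi\langle\xi\rangle^{-q}<\infty$ for $q>\dim G$, which you justify correctly via the trace of $(\operatorname{Id}+\mathcal{L}_G)^{-q/2}$ and Weyl's law, together with the orthogonality relations to identify the Fourier coefficients of the limit. No gaps.
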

For a more detailed exposition on the Fourier analysis of compact Lie groups, we refer to \cite{RT2010_book}.

Finally, for $m,n,d\in\N$, and a complex matrix $A\in \C^{m\times n}$, $m,n\in\N$, we fix the notation:
\begin{equation*}
    \ker A=\Big\{ u\in \C^{n}:Au=0\Big\}\quad \text{and}\quad \operatorname{ran}A=\Big\{v\in\C^{m}:\exists u\in \C^n,\ Au=v\Big\}.
\end{equation*}
Also, we shall denote by $\lambda_{\min}[A]$ its smallest singular value, and if $A\neq 0$, we denote by $\lambda_{\min}^{>0}[A]$ its smallest non-zero singular value. Note that $\lambda_{\min}^{>0}[A]$ is always positive and coincides with $\lambda_{\min}[A|_{(\ker A)^\perp}]$, where $A|_{(\ker A)^\perp}$ denotes the restriction of $A$ to the subspace $(\ker A)^\perp$. Hence, $\lambda_{\min}^{>0}[A]$ satisfies 
\begin{equation*}
    \lambda_{\min}^{>0}[A]=\min_{\substack{v\in (\ker A)^\perp\\\|v\|_2=1}} \|Av\|_2,
\end{equation*}
where $\|\cdot\|_{2}$ denotes the Euclidean norm. Consequently,
\begin{equation*}
    \|AB\|_{\HS}\geq \lambda_{\min}^{>0}[A]\|B\|_{\HS},
\end{equation*}
for any compatible matrix $B\in\C^{n\times d}$ whose columns are in $(\ker A)^\perp$. 

\section{Directional Poincaré inequality on compact Lie groups}

In this section we will prove necessary and sufficient conditions for the existence of a directional Poincaré inequality for left-invariant vector fields on a compact Lie group. From this characterization, we derive several corollaries (on general compact Lie groups and on the torus), and as an example we prove that every left-invariant vector field on $\S$ satisfies a directional Poincaré inequality with exponent $\delta=1$. But first, we fix some notation:

Let $G$ be a compact Lie group. There exists a basis of left-invariant vector fields $\mathcal{B}=\{X_1,\dots,X_n\}$ such that $\mathcal{L}_G=-(X_1^2+\dots+X_n^2)$. Denote by $\nabla_G:C^\infty(G)\to C^\infty(G)^n$ the gradient operator associated to the ordered basis $\mathcal{B}$. In other words,
\begin{equation*}
\nabla_Gf = (X_1f,\dots, X_nf),    
\end{equation*}
 for any $f\in C^\infty(G)$. Then any (real) left-invariant vector field $Y$ on $G$ can be written as $\langle \nabla_G,\alpha\rangle\vcentcolon=\alpha_1X_1+\dots +\alpha_n X_n$, where $\alpha\in \R^n$. When no confusion may arise, we shall sometimes omit the subscript $G$ and denote $\nabla_G$ simply by $\nabla.$

\subsection{Necessary and sufficient conditions}\label{sect_nec_suff}

\begin{prop}\label{prop_necessary}
     Let $Y=\langle \nabla_{G},\alpha\rangle$ be a left-invariant real vector field on a compact Lie group $G$ and $\delta\geq 1$. There exists $c>0$ such that 
    \begin{equation}\label{ineq_poincare_necessary}
        \|\nabla_{G} f\|_{L^2(G)}^{\delta-1}\| Yf\|_{L^2(G)}\geq c\|f\|_{L^2(G)}^\delta,
    \end{equation}
    for every $f\in H^1(G)$ with mean value zero, if and only if there exists $C>0$ such that
    \begin{equation}\label{ineq_necessary}
        \lambda_{\min}[\sigma_Y(\xi)]\geq C\langle \xi \rangle^{-(\delta-1)},
    \end{equation}
    for all non trivial $[\xi]\in\widehat{G}$. 
\end{prop}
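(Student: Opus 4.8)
The plan is to work entirely on the Fourier side using the Plancherel identity \eqref{eq_Plancherel}. First I would record the symbols of the relevant operators acting on a function $f\in H^1(G)$ with mean value zero: since $Y = \alpha_1 X_1 + \dots + \alpha_n X_n$ is left-invariant, it is a Fourier multiplier with $\widehat{Yf}(\xi) = \sigma_Y(\xi)\widehat f(\xi)$, and likewise each $X_j$ acts as $\widehat{X_jf}(\xi) = \sigma_{X_j}(\xi)\widehat f(\xi)$. The key algebraic fact I would invoke is that, because $\mathcal L_G = -(X_1^2+\dots+X_n^2)$ acts on the coefficient functions of $\xi$ as multiplication by $\nu_\xi$, one has $\sum_{j=1}^n \sigma_{X_j}(\xi)^*\sigma_{X_j}(\xi) = \nu_\xi\, I_{d_\xi}$, so that $\|\nabla_G f\|_{L^2(G)}^2 = \sum_{[\xi]} d_\xi\,\nu_\xi\,\|\widehat f(\xi)\|_{\HS}^2$. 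Combining this with $\nu_\xi \le \langle\xi\rangle^2 \le C_1\nu_\xi$ from \eqref{def_C1}, I get the two-sided comparison $\|\nabla_G f\|_{L^2(G)}^2 \asymp \sum_{[\xi]\neq \mathbf 1_G} d_\xi\,\langle\xi\rangle^2\,\|\widehat f(\xi)\|_{\HS}^2$, i.e. $\|\nabla_G f\|_{L^2(G)}$ is equivalent to the $\dot H^1$-seminorm (for mean-zero $f$), with constants depending only on $G$. Thus the inequality \eqref{ineq_poincare_necessary} is equivalent, up to fixed multiplicative constants, to a statement comparing $\big(\sum d_\xi\langle\xi\rangle^2\|\widehat f(\xi)\|^2\big)^{(\delta-1)/2}\big(\sum d_\xi\|\sigma_Y(\xi)\widehat f(\xi)\|^2\big)^{1/2}$ with $\big(\sum d_\xi\|\widehat f(\xi)\|^2\big)^{\delta/2}$.

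For the direction \eqref{ineq_necessary} $\Rightarrow$ \eqref{ineq_poincare_necessary}, I would argue termwise. For a fixed $f$, set $a_\xi = d_\xi\|\widehat f(\xi)\|_{\HS}^2$ and use $\|\sigma_Y(\xi)\widehat f(\xi)\|_{\HS}\ge \lambda_{\min}[\sigma_Y(\xi)]\|\widehat f(\xi)\|_{\HS}\ge C\langle\xi\rangle^{-(\delta-1)}\|\widehat f(\xi)\|_{\HS}$, valid since $\lambda_{\min}[\sigma_Y(\xi)]$ here is the genuine smallest singular value (no kernel issue, because we are on $H^1(G)$ with mean zero and assume \eqref{ineq_necessary}). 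Then $\|Yf\|_{L^2}^2 \ge C^2\sum_{[\xi]\neq\mathbf 1_G} \langle\xi\rangle^{-2(\delta-1)} a_\xi$. Writing $\|f\|_{L^2}^{2\delta} = \big(\sum a_\xi\big)^\delta$, I split $\sum a_\xi = \sum \langle\xi\rangle^{(\delta-1)} a_\xi^{1/\delta}\cdot \langle\xi\rangle^{-(\delta-1)}a_\xi^{(\delta-1)/\delta}$ — wait, more cleanly I would apply Hölder with exponents $\delta$ and $\delta/(\delta-1)$ to $a_\xi = \big(\langle\xi\rangle^{-2(\delta-1)}a_\xi\big)^{1/\delta}\big(\langle\xi\rangle^{2}a_\xi\big)^{(\delta-1)/\delta}$, since $-2(\delta-1)\cdot\frac1\delta + 2\cdot\frac{\delta-1}{\delta} = 0$, obtaining $\sum a_\xi \le \big(\sum\langle\xi\rangle^{-2(\delta-1)}a_\xi\big)^{1/\delta}\big(\sum\langle\xi\rangle^{2}a_\xi\big)^{(\delta-1)/\delta}$. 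Raising to the $\delta$ power and inserting the bounds above yields $\|f\|_{L^2}^{2\delta}\le C^{-2}\|Yf\|_{L^2}^2\cdot(C_1\text{-type const})\|\nabla_G f\|_{L^2}^{2(\delta-1)}$, which is \eqref{ineq_poincare_necessary} after taking square roots.

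For the converse \eqref{ineq_poincare_necessary} $\Rightarrow$ \eqref{ineq_necessary}, I would test the inequality on functions concentrated on a single representation. Fix a nontrivial $[\xi]\in\widehat G$ and let $v\in\C^{d_\xi}$ be a unit eigenvector of $\sigma_Y(\xi)^*\sigma_Y(\xi)$ realizing $\lambda_{\min}[\sigma_Y(\xi)]^2$; take $f$ to be the function whose only nonzero Fourier coefficient $\widehat f(\xi)$ is the rank-one matrix $v w^*$ for some fixed unit vector $w$ (this $f$ is a finite linear combination of coefficient functions of $\xi$, hence smooth, and has mean value zero). Then $\|f\|_{L^2}^2 = d_\xi$, $\|\nabla_G f\|_{L^2}^2 = d_\xi\,\nu_\xi \le d_\xi\langle\xi\rangle^2$, and $\|Yf\|_{L^2}^2 = d_\xi\|\sigma_Y(\xi)v w^*\|_{\HS}^2 = d_\xi\,\lambda_{\min}[\sigma_Y(\xi)]^2$. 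Plugging into \eqref{ineq_poincare_necessary} gives $(d_\xi\langle\xi\rangle^2)^{(\delta-1)/2}\cdot d_\xi^{1/2}\lambda_{\min}[\sigma_Y(\xi)] \ge c\, d_\xi^{\delta/2}$, and the powers of $d_\xi$ cancel, leaving $\lambda_{\min}[\sigma_Y(\xi)] \ge c\,\langle\xi\rangle^{-(\delta-1)}$, which is \eqref{ineq_necessary} with $C = c$.

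The routine parts are the Plancherel bookkeeping and the Hölder step; the one point requiring care — and the main thing to get right — is the identity $\sum_j \sigma_{X_j}(\xi)^*\sigma_{X_j}(\xi) = \nu_\xi I_{d_\xi}$ together with the claim that for mean-zero $f\in H^1(G)$ the quantity $\|\nabla_G f\|_{L^2(G)}$ is comparable to $\|f\|_{H^1(G)}$ with constants depending only on $G$ (via \eqref{def_C1}); this comparison is what lets \eqref{ineq_necessary}, stated in terms of the spectral weight $\langle\xi\rangle$, be equivalent to \eqref{ineq_poincare_necessary}, stated in terms of the gradient. I would also note explicitly that on $H^1(G)$ with mean value zero there is no distinction between $\lambda_{\min}$ and $\lambda_{\min}^{>0}$ being forced, because \eqref{ineq_necessary} in particular forces $\sigma_Y(\xi)$ to be invertible for all nontrivial $\xi$ — and conversely if some $\sigma_Y(\xi_0)$ were singular, testing on the corresponding kernel vector would make the left side of \eqref{ineq_poincare_necessary} vanish while the right side does not, contradicting it; this is the subtle reason \eqref{ineq_poincare_necessary} on all of mean-zero $H^1(G)$ is so much stronger than the inequality on $(\ker Y)^\perp$ treated in Theorem \ref{theo_directional_poincare_general}.
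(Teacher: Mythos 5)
Your proposal is correct, and both directions take a genuinely different route from the paper. For the implication \eqref{ineq_necessary}$\,\Rightarrow\,$\eqref{ineq_poincare_necessary}, the paper does \emph{not} give a self-contained argument at this point: it first observes that \eqref{ineq_necessary} forces $Y$ to be globally hypoelliptic, then invokes the Greenfield--Wallach theorem to conclude that $G$ must be a torus, and finally defers to an adaptation of Steinerberger's original proof on $\T^n$. Your H\"older argument --- splitting
$a_\xi = \bigl(\langle\xi\rangle^{-2(\delta-1)}a_\xi\bigr)^{1/\delta}\bigl(\langle\xi\rangle^{2}a_\xi\bigr)^{(\delta-1)/\delta}$
with conjugate exponents $\delta$ and $\delta/(\delta-1)$, then inserting the termwise bounds $\|\sigma_Y(\xi)\widehat f(\xi)\|_{\HS}\ge C\langle\xi\rangle^{-(\delta-1)}\|\widehat f(\xi)\|_{\HS}$ and $\langle\xi\rangle^2\le C_1\nu_\xi$ --- is self-contained and yields an explicit constant $c=C\,C_1^{-(\delta-1)/2}$. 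It also differs from the self-contained proof the paper gives for the general Theorem~\ref{theo_directional_poincare_general}, which instead truncates the frequency sum at the scale $\sqrt{\nu_\xi}\sim \|\nabla f\|_{L^2}/\|f\|_{L^2}$ and uses the resulting one-sided tail bound (a Chernoff-type trick). Your route is arguably the tidiest of the three; the identity $\sum_j\sigma_{X_j}(\xi)^*\sigma_{X_j}(\xi)=\nu_\xi I_{d_\xi}$, which you correctly deduce from anti-hermitianness of each $\sigma_{X_j}(\xi)$, is precisely the Plancherel identity for $\|\nabla_G f\|_{L^2}$ that the paper uses implicitly. One small gap: the H\"older step degenerates when $\delta=1$ (the conjugate exponent $\delta/(\delta-1)$ is $\infty$), so that case should be split off; there \eqref{ineq_necessary} reads $\lambda_{\min}[\sigma_Y(\xi)]\ge C$ and gives $\|Yf\|_{L^2}\ge C\|f\|_{L^2}$ directly from Plancherel.

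For the converse, the paper argues by contrapositive, constructing a sequence $f_n$ of near-extremal test functions (plus a separate case for a vanishing singular value) and showing the ratio tends to zero; you instead test \eqref{ineq_poincare_necessary} directly on a single-representation function $\widehat f(\xi)=vw^*$ with $v$ realizing $\lambda_{\min}[\sigma_Y(\xi)]$ and read off the bound immediately, with the $d_\xi$-powers cancelling. This is logically equivalent but cleaner in presentation, and you correctly note that $\lambda_{\min}[\sigma_Y(\xi_0)]=0$ would make $\|Yf\|_{L^2}=0$ for a nonzero mean-zero $f$, contradicting \eqref{ineq_poincare_necessary}, so the conclusion automatically rules out any kernel at nontrivial $\xi$ --- which is indeed the content of Corollary~\ref{coro_poincare_implies_torus}.
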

\begin{proof}
For the sufficiency of the condition stated, note that if inequality \eqref{ineq_necessary} holds, then by \cite{strongly_invariant_KW,tese-Nicholas} the vector field $Y$ must be globally hypoelliptic. But then by \cite{GW_GH_vector_fields} we must have that $G$ is a torus. Therefore inequality \eqref{ineq_poincare_necessary} follows from a slight adaptation in the proof of \cite[Theorem 1]{directional_Poincare}.

    Next, suppose that \eqref{ineq_necessary} does not hold. First assume that  $\lambda_{\min}[\sigma_Y(\xi_0)]=0$ for some non-trivial  $[\xi_0]$. Let $f\in H^1(G)$ be given by: $\widehat{f}(\xi)$ is the zero matrix for all $[\xi]\neq [\xi_0]$, and $\widehat{f}(\xi_0)$ is given by 
\begin{equation*}
    \widehat{f}(\xi_0)=\begin{pmatrix}
      v_0 & 0 & \cdots & 0  
    \end{pmatrix}\in\C^{d_{\xi_0}\times d_{\xi_0}},
\end{equation*}
for some $v_0\in\ker \sigma_Y(\xi_0)$ with $\|v_0\|=1$. Clearly $f\in H^1(G)$ (in fact $f\in C^\infty(G)$), and since $0=\widehat{f}({\bf1}_G)=\int_Gf(x)dx$, where ${\bf1}_G$ denotes the trivial representation, we have that $f$ has mean value zero.

Then by Plancherel's Identity we have that $\|Yf\|_{L^2}=0$ and $\|f\|_{L^2}=1$, so that inequality \eqref{ineq_poincare_necessary} cannot hold for any $d>1$ and $c>0$.

Now, assume that $\lambda_{\min}[\sigma_Y(\xi)]\neq 0$, for every non-trivial $[\xi]\in\widehat{G}$. Then,  there exist sequences $[\xi_n]\in\widehat{G}$ and $v_{n}\in \C^{d_{\xi_n}}$, $\|v_n\|_2=1$, such that 
\begin{equation*}
  0< \| \sigma_Y(\xi_n)v_n\|_2<\frac{1}{n}\langle \xi_n \rangle ^{-(\delta-1)},
\end{equation*}
for every $n\in\N$.
 Let $f_n\in H^1(G)$ be given by
\begin{equation*}
    \widehat{f_n}(\xi)=\begin{cases}
        0&\text{if }\xi\neq \xi_n,\\
         \begin{pmatrix}
        v_n&0&\cdots&0
    \end{pmatrix}_{d_{\xi_n}\times d_{\xi_n}}&\text{if }\xi=\xi_n.
    \end{cases}
\end{equation*}
    Note that by Plancherel's identity we have that 
    \begin{align*}
        \|f_n\|_{L^2}^{2\delta}&=1,\\
        \|\nabla f_n\|_{L^2}^{2(\delta-1)}&=\nu_{\xi_n}^{\delta-1}\sim\langle \xi_n\rangle^{2(\delta-1)},\quad \text{and}\\
        \|Yf_n\|_{L^2}^{2}&= \|\sigma_Y(\xi_n)v_n\|_2^2< \frac{1}{n^2}\langle\xi_n\rangle^{-2(\delta-1)}.
    \end{align*} 
    Therefore,
    \begin{align*}
         \frac{ \|\nabla f_n\|_{L^2}^{2(\delta-1)}\| Yf_n\|_{L^2}^2}{\|f_n\|_{L^2}^{2\delta}}&\lesssim \frac{1}{n^2}\to 0,\quad \text{as } n\to \infty,
    \end{align*}
 so that inequality \eqref{ineq_poincare_necessary} cannot hold for any $c>0$, as claimed. 
\end{proof}

\begin{corollary}\label{coro_poincare_implies_torus}
Let $Y=\langle \nabla_{G},\alpha\rangle$ be a left-invariant real vector field on a compact Lie group $G$ and $\delta\geq 1$. If there exists $c>0$ such that 
    \begin{equation}\label{eq_poincare_implies_torus}
        \|\nabla_{G} f\|_{L^2(G)}^{\delta-1}\| Yf\|_{L^2(G)}\geq c\|f\|_{L^2(G)}^\delta,
    \end{equation}
    for every $f\in H^1(G)$ with mean-value zero, then $G$ is a torus.
\end{corollary}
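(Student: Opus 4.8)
The plan is to reduce the statement to the chain of implications already assembled in the proof of Proposition~\ref{prop_necessary}. Assume that \eqref{eq_poincare_implies_torus} holds; this is precisely inequality \eqref{ineq_poincare_necessary}, so the necessity direction of Proposition~\ref{prop_necessary} (established there via the explicit test functions $f_n$) yields a constant $C>0$ with
\[
\lambda_{\min}[\sigma_Y(\xi)] \geq C\langle\xi\rangle^{-(\delta-1)}
\]
for every non-trivial $[\xi]\in\widehat{G}$. In particular $\sigma_Y(\xi)$ is invertible for all non-trivial $\xi$, with $\|\sigma_Y(\xi)^{-1}\|\leq C^{-1}\langle\xi\rangle^{\delta-1}$.

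Next I would use this polynomial bound to deduce that $Y$ is globally hypoelliptic, which is the content of \cite{strongly_invariant_KW,tese-Nicholas}: if $u\in\mathcal{D}'(G)$ satisfies $Yu=g\in C^\infty(G)$, then on each non-trivial block $\widehat{u}(\xi)=\sigma_Y(\xi)^{-1}\widehat{g}(\xi)$, so $\|\widehat{u}(\xi)\|_{\HS}\leq C^{-1}\langle\xi\rangle^{\delta-1}\|\widehat{g}(\xi)\|_{\HS}$; since $\widehat{g}$ decays faster than any power of $\langle\xi\rangle$ by Proposition~\ref{prop_charact_smooth_and_dis}, so does $\widehat{u}$, and hence $u\in C^\infty(G)$ (the trivial component of $u$ being a constant, hence harmless). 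Then I would invoke the theorem of Greenfield and Wallach \cite{GW_GH_vector_fields}: a connected compact Lie group that admits a globally hypoelliptic left-invariant vector field must be a torus, which is exactly the desired conclusion.

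I do not expect a genuine obstacle here, since the substantive work is already packaged into Proposition~\ref{prop_necessary} and the cited global hypoellipticity criterion; the corollary is essentially a restatement of the first paragraph of that proof. The only points to watch are the role of the trivial representation — the hypothesis is imposed only on mean-zero functions and global hypoellipticity is insensitive to the constant component, so there is no real loss — and the implicit fact that $Y\neq 0$, which is forced because otherwise $\|Yf\|_{L^2(G)}=0$ while $\|f\|_{L^2(G)}\neq 0$ for any nonconstant mean-zero $f$, contradicting \eqref{eq_poincare_implies_torus}.
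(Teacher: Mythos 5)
Your proof is correct and follows essentially the same route as the paper: apply the necessity direction of Proposition~\ref{prop_necessary} to get the symbol lower bound, deduce global hypoellipticity of $Y$ via the cited criterion, and conclude that $G$ is a torus by Greenfield--Wallach. The additional remarks about the trivial representation and about $Y\neq 0$ are accurate but already subsumed in Proposition~\ref{prop_necessary} and its symbol condition on non-trivial $[\xi]$.
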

\begin{proof}
Suppose that there exist $\delta\geq 1$ and $c>0$ such that \eqref{eq_poincare_implies_torus} holds for every $f\in H^1(G)$ with mean-value zero. Then by Proposition \ref{prop_necessary}, there exists $C>0$ such that
\begin{equation*}
    \lambda_{\min}[\sigma_{Y}(\xi)]\geq C\langle \xi\rangle^{-(\delta-1)},
\end{equation*}
for every non-trivial $[\xi]\in\widehat{G}$. By a result in \cite{strongly_invariant_KW,tese-Nicholas} this implies that $Y$ is a real globally hypoelliptic vector field on a compact Lie group, so by \cite{GW_GH_vector_fields} we must have that $G$ is (diffeomorphic to) a torus.
\end{proof}

\begin{definition}\label{def_EY}
     Let $s\in\R$ and $P:H^{m+s}(G)\to H^s(G)$ be a bounded linear operator. Denote by $ (\ker P)^\perp$ the subspace of $H^{m+s}(G)$ corresponding to the orthogonal complement of the kernel of $P$ with respect to the usual inner product $\langle \cdot ,\cdot \rangle _{H^{m+s}(G)}=\langle \Xi^{(m+s)/2}\cdot , \Xi^{(m+s)/2}\cdot\rangle_{L^2(G)}$, where $\Xi=\operatorname{Id}+\mathcal{L}_G$.
\end{definition}

Next, for left-invariant continuous linear operators $P:C^\infty(G)\to C^\infty(G)$ of order $m\in\R$, we characterize the distributions in the space $(\ker P|_{H^{m+s}(G)})^\perp\subset H^{m+s}(G)$, introduced in Definition \ref{def_EY}, in terms of conditions on their Fourier coefficients. Here, and for the rest of this paper, $P|_{H^{m+s}(G)}$ will denote the extension $P|_{H^{m+s}(G)}:H^{m+s}(G)\to H^s(G)$ of $P$ which is bounded by definition.

\begin{obs}\label{remark_perp}
    As already mentioned in the Introduction, in contrast to the directional Poincaré inequality on the torus introduced in  \cite{directional_Poincare}, we will prove the directional Poincaré inequality for all $f\in  (\ker Y|_{H^1(G)})^\perp\subset H^1(G)$ instead of for $f$ with mean value zero.  This choice is natural as it provides the largest subspace of $H^1(G)$ where this type of inequality can possibly hold, which can be seen directly from the fact that $H^1(G)=\ker Y|_{H^1(G)}\oplus(\ker Y|_{H^1(G)})^\perp$ and a directional Poincaré inequality cannot hold on a subspace $W\subset H^1(G)$ on which $Y:W\to L^2(G)$ is not injective.     
\end{obs}

\begin{prop}\label{prop_EP_equiv_Fourier}
    Let $s\in\R$ and $P:C^\infty(G)\to C^\infty(G)$ be left-invariant continuous linear operator of order $m\in\R$. A distribution $f\in H^{m+s}(G)$ is in  $(\ker P|_{H^{m+s}(G)})^\perp$ if and only if, for every $[\xi]\in\widehat{G}$, the columns of $\widehat{f}(\xi)$ are in $(\ker \sigma_P(\xi))^\perp$.
\end{prop}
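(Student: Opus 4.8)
The plan is to unwind the definition of the orthogonal complement in $H^{m+s}(G)$ via the Plancherel/Parseval formula \eqref{eq_Plancherel}, reducing the membership condition $f\in(\ker P|_{H^{m+s}(G)})^\perp$ to a fiberwise (i.e. representation-by-representation) linear-algebra statement, and then matching that up with the description of $\ker P|_{H^{m+s}(G)}$ in terms of Fourier coefficients. Concretely, the first step is to identify $\ker P|_{H^{m+s}(G)}$: since $P$ is a left-invariant Fourier multiplier, $\widehat{Pf}(\xi)=\sigma_P(\xi)\widehat{f}(\xi)$, so $f\in\ker P|_{H^{m+s}(G)}$ if and only if $f\in H^{m+s}(G)$ and $\sigma_P(\xi)\widehat{f}(\xi)=0$ for every $[\xi]\in\widehat{G}$, i.e. every column of $\widehat{f}(\xi)$ lies in $\ker\sigma_P(\xi)$. (One should note here that $\ker P|_{H^{m+s}(G)}$ is a closed subspace of $H^{m+s}(G)$, since $P|_{H^{m+s}(G)}$ is bounded, so the orthogonal decomposition in Remark \ref{remark_perp}/Definition \ref{def_EY} makes sense.)

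Next I would compute the inner product on $H^{m+s}(G)$ in Fourier terms. Since $\Xi=\operatorname{Id}+\mathcal{L}_G$ acts on the $[\xi]$-block by multiplication by $\langle\xi\rangle^2$, for $f,g\in H^{m+s}(G)$ we get
\begin{equation*}
\langle f,g\rangle_{H^{m+s}(G)}=\sum_{[\xi]\in\widehat{G}}d_\xi\,\langle\xi\rangle^{2(m+s)}\,\Tr\!\big(\widehat{f}(\xi)\widehat{g}(\xi)^*\big).
\end{equation*}
Then $f\in(\ker P|_{H^{m+s}(G)})^\perp$ means this vanishes for every $g\in\ker P|_{H^{m+s}(G)}$. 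For the ``if'' direction, suppose every column of $\widehat{f}(\xi)$ lies in $(\ker\sigma_P(\xi))^\perp$; for any such $g$, every column of $\widehat{g}(\xi)$ lies in $\ker\sigma_P(\xi)$, and since $\Tr(\widehat{f}(\xi)\widehat{g}(\xi)^*)=\sum_j\langle\text{col}_j\widehat{f}(\xi),\text{col}_j\widehat{g}(\xi)\rangle$ is a sum of inner products of vectors from orthogonal subspaces of $\C^{d_\xi}$, each term vanishes, so $\langle f,g\rangle_{H^{m+s}(G)}=0$ and $f\perp\ker P|_{H^{m+s}(G)}$.

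For the converse, suppose $f\in(\ker P|_{H^{m+s}(G)})^\perp$ but some column $v$ of some $\widehat{f}(\xi_0)$ has nonzero component $w\neq 0$ in $\ker\sigma_P(\xi_0)$. I would construct a test element $g\in\ker P|_{H^{m+s}(G)}$ supported on $[\xi_0]$ whose Fourier coefficient has $w$ (suitably placed) as a column and zeros elsewhere — this $g$ is smooth, lies in $H^{m+s}(G)$, and satisfies $\sigma_P(\xi_0)\widehat{g}(\xi_0)=0$, hence $g\in\ker P|_{H^{m+s}(G)}$ — and then $\langle f,g\rangle_{H^{m+s}(G)}=d_{\xi_0}\langle\xi_0\rangle^{2(m+s)}\langle w,w\rangle\neq 0$, a contradiction. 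I don't expect a serious obstacle here; the only points requiring care are bookkeeping with matrix indices (placing $w$ in the correct column of $\widehat{g}(\xi_0)$ to pair against the correct column of $\widehat{f}(\xi_0)$, using that $\Tr(AB^*)=\sum_{i,j}A_{ij}\overline{B_{ij}}$), checking that the constructed $g$ genuinely lies in the Sobolev space (immediate, as it is a finite sum of coefficient functions, hence smooth), and confirming that boundedness of $P|_{H^{m+s}(G)}$ guarantees the kernel is closed so that the orthogonal complement behaves as expected. One subtlety worth flagging explicitly: the decomposition $\Tr(\widehat{f}(\xi)\widehat{g}(\xi)^*)$ into column inner products relies on writing the Hilbert–Schmidt/Frobenius pairing columnwise, which is routine but should be stated.
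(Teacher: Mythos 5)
Your proof is correct and takes essentially the same approach as the paper: identify $\ker P|_{H^{m+s}(G)}$ via the Fourier multiplier property, express $\langle\cdot,\cdot\rangle_{H^{m+s}(G)}$ in Fourier terms via Plancherel, handle the ``if'' direction by columnwise orthogonality, and handle the ``only if'' direction by constructing test elements of the kernel supported on a single representation $[\xi_0]$ with a suitable vector placed in one column. The only cosmetic difference is that you phrase the ``only if'' direction as a proof by contradiction (projecting a column onto $\ker\sigma_P(\xi_0)$ and deriving a nonzero pairing), whereas the paper argues directly by showing every column pairs to zero against every $v_\xi\in\ker\sigma_P(\xi)$; these are logically equivalent and use the same construction.
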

\begin{proof}
    
Suppose that $f\in H^{m+s}(G)$ is in $(\ker P|_{H^{m+s}(G)})^{\perp}$. Fix $[\xi]\in\widehat{G}$ and let $v_\xi \in \ker \sigma_P({\xi})$. For $1\leq j\leq d_\xi$, let $u_j\in C^\infty(G)\subset {H^{m+s}(G)}$ be given by $\widehat{u_j}(\eta)=0$ for $[{\xi}]\neq [\eta]\in\widehat{G}$ and $\widehat{u_j}({\xi})$ be the $d_\xi\times d_\xi$ matrix  with $v_\xi$ as its $j$-th column and every other column be the zero vector. Then,
\begin{align*}
    Pu_j&=d_{\xi} \Tr(\sigma_{P}({\xi})\widehat{u_j}(\xi){\xi}(x))=d_{\xi}\Tr\left(\begin{pmatrix}
        0&\cdots&0& \sigma_{L}({\xi})v_\xi&0&\cdots&0
    \end{pmatrix}{\xi}(x)\right)=0;
\end{align*}
that is, $u_j\in \ker P|_{H^{m+s}(G)}$.
By identity \eqref{eq_Plancherel} we have that
    \begin{equation*}
        0=\frac{1}{d_\xi\langle \xi\rangle^{2(m+s)}} \langle f,u_j\rangle_{H^{m+s}(G)} = \Tr (\widehat{f}({\xi})\widehat{u_j}({\xi})^*)=\Tr (\overline{\widehat{u_j}({\xi})}^t\widehat{f}({\xi}))=\langle \widehat{f}({\xi})_{\cdot j},v_\xi\rangle_{\C^{d_\xi}},
    \end{equation*}
    where $\widehat{f}({\xi})_{\cdot j}$ denotes the $j$-th column of $\widehat{f}({\xi})$ and $\langle \cdot ,\cdot\rangle_{\C^{d_\xi}}$ denotes the usual inner product on $\C^{d_\xi}$. Hence every column of $\widehat{f}({\xi})$ is in $(\ker \sigma_{P}({\xi}))^\perp$, as claimed.
    
    Conversely, if  every column of $\widehat{f}(\xi)$ is in $(\ker \sigma_{P}(\xi))^\perp$, then for any $u\in \ker P|_{H^{m+s}(G)}$, we have that 
    \begin{align*}
        \langle f,u\rangle_{H^{m+s}(G)}&=\sum_{[\xi]\in\widehat{G}} d_\xi \langle \xi\rangle^{2(m+s)} \Tr(\widehat{f}(\xi)\widehat{u}({\xi})^*)=\sum_{[\xi]\in\widehat{G}} d_\xi \langle \xi\rangle^{2(m+s)}\Tr(\overline{\widehat{u}({\xi})}^t\widehat{f}(\xi))\\
        &=\sum_{[\xi]\in\widehat{G}} d_\xi\langle \xi\rangle^{2(m+s)}\sum_{j=1}^{d_\xi}\langle \widehat{f}({\xi})_{\cdot j},\widehat{u}({\xi})_{\cdot j}\rangle_{\C^{d_\xi}}=0,
    \end{align*}
    since every term in the sum above is equal to zero as $\widehat{u}({\xi})_{\cdot j}\in \ker \sigma_P(\xi)$, for every $[\xi]\in\widehat{G}$ and $1\leq j\leq d_\xi$. Therefore $f\in (\ker P|_{H^{m+s}(G)})^{\perp}$.
\end{proof}

\begin{theorem}\label{theo_directional_poincare_general}
    Let $Y=\langle \nabla_{G},\alpha\rangle$ be a left-invariant real vector field on a compact Lie group $G$ and $\delta\geq 1$. There exists $c>0$ such that 
\begin{equation}\label{ineq_poincare_nec_suf}
        \|\nabla_{G} f\|_{L^2(G)}^{\delta-1}\| Yf\|_{L^2(G)}\geq c\|f\|_{L^2(G)}^\delta,
    \end{equation}
    for every $f\in (\ker Y|_{H^1(G)})^\perp$, if and only if there exist $C>0$ such that
\begin{equation}\label{ineq_min_sin_nonzero_general_poincare}
        \lambda_{\min}^{>0}[\sigma_Y(\xi)]\geq C\langle \xi \rangle^{-(\delta-1)},
    \end{equation}
    for all $[\xi]\in\widehat{G}$ such that $\sigma_Y(\xi)\neq 0$. 
\end{theorem}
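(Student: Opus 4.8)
The plan is to reduce the inequality \eqref{ineq_poincare_nec_suf} to a purely spectral statement by means of Plancherel's identity \eqref{eq_Plancherel} and the characterization of $(\ker Y|_{H^1(G)})^\perp$ in terms of Fourier coefficients provided by Proposition \ref{prop_EP_equiv_Fourier} (applied with $P=Y$, $m=1$, $s=0$). Concretely, for $f\in (\ker Y|_{H^1(G)})^\perp$ the columns of each $\widehat f(\xi)$ lie in $(\ker\sigma_Y(\xi))^\perp$, and since $\widehat{Yf}(\xi)=\sigma_Y(\xi)\widehat f(\xi)$ and $\|\nabla_G f\|_{L^2}^2=\sum_{[\xi]}d_\xi\,\nu_\xi\|\widehat f(\xi)\|_{\HS}^2$ (because $\mathcal L_G=-(X_1^2+\cdots+X_n^2)$), all three quantities in the inequality become $\ell^2$-type sums over $\widehat G$ of Hilbert–Schmidt norms of $\widehat f(\xi)$ and $\sigma_Y(\xi)\widehat f(\xi)$, weighted by powers of $\langle\xi\rangle$.

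For the sufficiency direction, assume \eqref{ineq_min_sin_nonzero_general_poincare}. First I would handle the trivial representation and the $[\xi]$ with $\sigma_Y(\xi)=0$: for these, $(\ker\sigma_Y(\xi))^\perp=\{0\}$, so $\widehat f(\xi)=0$ and such terms contribute nothing. For each remaining $[\xi]$, using the last displayed inequality of Section \ref{sect_pre} (namely $\|\sigma_Y(\xi)B\|_{\HS}\ge\lambda_{\min}^{>0}[\sigma_Y(\xi)]\|B\|_{\HS}$ for $B$ with columns in $(\ker\sigma_Y(\xi))^\perp$) together with \eqref{ineq_min_sin_nonzero_general_poincare} and $\langle\xi\rangle^2\le C_1\nu_\xi$ from \eqref{def_C1}, I get a pointwise bound $\|\widehat{Yf}(\xi)\|_{\HS}^2\ge C^2\langle\xi\rangle^{-2(\delta-1)}\|\widehat f(\xi)\|_{\HS}^2$, hence $\langle\xi\rangle^{2(\delta-1)}\|\widehat{Yf}(\xi)\|_{\HS}^2\ge C^2\|\widehat f(\xi)\|_{\HS}^2$, and also $\nu_\xi^{\delta-1}\ge C_1^{-(\delta-1)}\langle\xi\rangle^{2(\delta-1)}$. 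The remaining point is to pass from these per-$\xi$ bounds to the global inequality \eqref{ineq_poincare_nec_suf}, which has the non-multiplicative shape $\|\nabla f\|^{\delta-1}\|Yf\|\ge c\|f\|^\delta$. I would argue: set $a_\xi=d_\xi\|\widehat f(\xi)\|_{\HS}^2$ and note $\|f\|_{L^2}^2=\sum a_\xi$, $\|\nabla f\|_{L^2}^2=\sum\nu_\xi a_\xi$, $\|Yf\|_{L^2}^2=\sum d_\xi\|\widehat{Yf}(\xi)\|_{\HS}^2$. From the pointwise bound, $\|Yf\|_{L^2}^2\ge C^2\sum\langle\xi\rangle^{-2(\delta-1)}a_\xi\ge C^2 C_1^{-(\delta-1)}\sum\nu_\xi^{-(\delta-1)}a_\xi$ (treating $\nu_\xi=0$ terms as absent, as above). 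Then by Hölder's inequality with exponents $\delta$ and $\delta/(\delta-1)$ (when $\delta>1$; the case $\delta=1$ is immediate), $\sum a_\xi=\sum(\nu_\xi a_\xi)^{(\delta-1)/\delta}(\nu_\xi^{-(\delta-1)}a_\xi)^{1/\delta}\le(\sum\nu_\xi a_\xi)^{(\delta-1)/\delta}(\sum\nu_\xi^{-(\delta-1)}a_\xi)^{1/\delta}$, which after raising to the power $\delta$ and inserting the bound on $\|Yf\|^2$ yields $\|f\|_{L^2}^{2\delta}\le\|\nabla f\|_{L^2}^{2(\delta-1)}\cdot C^{-2}C_1^{\delta-1}\|Yf\|_{L^2}^2$, i.e.\ \eqref{ineq_poincare_nec_suf} with $c=C\,C_1^{-(\delta-1)/2}$.

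For the necessity direction, I would contrapose, essentially mimicking the second half of the proof of Proposition \ref{prop_necessary} but restricting attention to $[\xi]$ with $\sigma_Y(\xi)\ne0$ and to test functions whose Fourier coefficient is supported on the orthogonal complement of $\ker\sigma_Y(\xi)$ (so they lie in $(\ker Y|_{H^1(G)})^\perp$ by Proposition \ref{prop_EP_equiv_Fourier}). If \eqref{ineq_min_sin_nonzero_general_poincare} fails, choose $[\xi_n]$ with $\sigma_Y(\xi_n)\ne0$ and unit vectors $v_n\in(\ker\sigma_Y(\xi_n))^\perp$ realizing (up to a factor $2$, say) the smallest nonzero singular value, so that $0<\|\sigma_Y(\xi_n)v_n\|_2<\tfrac1n\langle\xi_n\rangle^{-(\delta-1)}$; take $f_n$ with $\widehat{f_n}(\xi_n)$ the matrix having $v_n$ in its first column and zeros elsewhere, $\widehat{f_n}(\xi)=0$ otherwise. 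Then $\|f_n\|_{L^2}=1$, $\|\nabla f_n\|_{L^2}^2=\nu_{\xi_n}\le\langle\xi_n\rangle^2$, and $\|Yf_n\|_{L^2}^2=\|\sigma_Y(\xi_n)v_n\|_2^2<n^{-2}\langle\xi_n\rangle^{-2(\delta-1)}$, so the ratio $\|\nabla f_n\|^{2(\delta-1)}\|Yf_n\|^2/\|f_n\|^{2\delta}\lesssim n^{-2}\to0$, contradicting \eqref{ineq_poincare_nec_suf}.

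I expect the main obstacle to be the sufficiency direction, specifically the Hölder-interpolation step that converts the favorable per-frequency estimates into the genuinely nonlinear product inequality; one must be careful about the frequencies with $\nu_\xi=0$ or $\sigma_Y(\xi)=0$ (where the weight $\nu_\xi^{-(\delta-1)}$ is undefined) and confirm that, on $(\ker Y|_{H^1(G)})^\perp$, those frequencies carry no mass so the interpolation sum is legitimate. Everything else is a bookkeeping exercise with Plancherel's identity \eqref{eq_Plancherel}, the equivalence \eqref{def_C1}, and the singular-value inequality recorded at the end of Section \ref{sect_pre}.
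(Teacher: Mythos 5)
Your proposal is correct, and the necessity direction coincides with the paper's argument verbatim (same test functions $f_n$, same estimates). The sufficiency direction, however, takes a genuinely different route. Where you interpolate via H\"older's inequality with exponents $\delta/(\delta-1)$ and $\delta$ applied to the factorization $a_\xi=(\nu_\xi a_\xi)^{(\delta-1)/\delta}(\nu_\xi^{-(\delta-1)}a_\xi)^{1/\delta}$, the paper instead uses a Chebyshev-type frequency cutoff: it observes that the high frequencies with $\sqrt{\nu_\xi}\ge 2\|\nabla f\|/\|f\|$ carry at most $\|f\|^2/4$ of the $L^2$-mass, so the low frequencies carry at least $3\|f\|^2/4$, and on those frequencies $\nu_\xi^{-(\delta-1)}\ge(4\|\nabla f\|^2/\|f\|^2)^{-(\delta-1)}$ can be pulled out of the sum. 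Both arguments pass correctly from the per-$\xi$ lower bound $\|\sigma_Y(\xi)\widehat f(\xi)\|_{\HS}\ge\lambda_{\min}^{>0}[\sigma_Y(\xi)]\|\widehat f(\xi)\|_{\HS}$ to the non-multiplicative inequality $\|\nabla f\|^{\delta-1}\|Yf\|\ge c\|f\|^\delta$, and you handle the degenerate frequencies ($\sigma_Y(\xi)=0$, including the trivial representation) in exactly the right way by invoking Proposition \ref{prop_EP_equiv_Fourier}. Your H\"older route is arguably cleaner and yields the slightly sharper constant $c=CC_1^{-(\delta-1)/2}$ (versus the paper's $\sqrt{3}C(4\sqrt{C_1})^{-(\delta-1)}$), and it also makes transparent why the proof carries over to arbitrary Fourier multipliers $P$ in Corollary \ref{coro_poincare_multipliers}; the paper's cutoff argument has the minor virtue of avoiding the case split at $\delta=1$, since its final display is valid for $\delta\ge1$ uniformly, whereas H\"older with conjugate exponents requires $\delta>1$ and you rightly treat $\delta=1$ separately.
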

\begin{proof}
Note that if \eqref{ineq_min_sin_nonzero_general_poincare} does not hold, then there exist sequences $[\xi_n]\in\widehat{G}$ and $v_{n}\in (\ker \sigma_Y(\xi_n))^\perp\subset \C^{d_{\xi_n}}$, $\|v_n\|_2=1$, such that 
\begin{equation*}
  0< \| \sigma_Y(\xi_n)v_n\|_2<\frac{1}{n}\langle \xi_n \rangle ^{-(\delta-1)},
\end{equation*}
for every $n\in\N$.

Then, for every $n\in\N$, we have that the function $f_n\in H^1(G)$ given by the Fourier coefficients 
\begin{equation*}
    \widehat{f_n}(\xi)=\begin{cases}
        0&\text{if }\xi\neq \xi_n,\\
         \begin{pmatrix}
        v_n&0&\cdots&0
    \end{pmatrix}_{d_{\xi_n}\times d_{\xi_n}}&\text{if }\xi=\xi_n,
    \end{cases}
\end{equation*}
is in $(\ker Y|_{H^1(G)})^\perp$ by Proposition \ref{prop_EP_equiv_Fourier}. Following the same arguments as in the proof of Proposition \ref{prop_necessary}, we conclude that inequality \eqref{ineq_poincare_nec_suf} cannot hold.

Next assume that inequality \eqref{ineq_min_sin_nonzero_general_poincare} holds, and let $f\in (\ker Y|_{H^1(G)})^\perp$. We may assume $f\neq 0$, otherwise there is nothing to prove.

Note that if $\sigma_Y(\xi)\neq 0$, the smallest singular value of the linear operator $\sigma_Y(\xi):\C^{d_\xi}\to \C^{d_\xi}$ restricted to the subspace $(\ker \sigma_Y(\xi))^\perp$ is precisely $\lambda_{\min}^{>0}[\sigma_Y(\xi)]$. 
Since by Proposition \ref{prop_EP_equiv_Fourier}, the columns of $\widehat{f}(\xi)$ belong to $(\ker \sigma_Y(\xi))^\perp$ for every $[\xi]\in\widehat{G}$,  we conclude that 
\begin{equation}\label{ineq_symbol_norm_hs}
    \|\sigma_Y(\xi)\widehat{f}(\xi)\|_{\HS}\geq \lambda_{\min}^{>0}[\sigma_Y(\xi)]\|\widehat{f}(\xi)\|_{\HS},
\end{equation}
for every $[\xi]\in\widehat{G}$ such that $\sigma_Y(\xi)\neq 0$. Otherwise, if $\sigma_Y(\xi)=0$, then $\widehat{f}(\xi)=0$, again by Proposition \ref{prop_EP_equiv_Fourier},   and \eqref{ineq_symbol_norm_hs} also holds trivially in this case.

We conclude that
\begin{align}
    \|Yf\|_{L^2(G)}^2&=\sum_{[\xi]\in\widehat{G}}d_\xi\|\sigma_Y(\xi)\widehat{f}(\xi)\|_{\HS}^2\geq \sum_{[\xi]\in\widehat{G}}d_\xi(\lambda_{\min}^{>0}[\sigma_Y(\xi)])^2\|\widehat{f}(\xi)\|_{\HS}^2\notag\\
    &\geq \sum_{[\xi]\in\widehat{G}}d_\xi C^2\langle \xi \rangle^{-2(\delta-1)} \|\widehat{f}(\xi)\|_{\HS}^2.\label{ineq_poincare_suff_proof}
\end{align}
     Next note that for any non-zero $g\in H^1(G)$, we have
\begin{align*}
    \|\nabla g\|_{L^2(G)}^2&=\sum_{[\xi]\in\widehat{G}}d_\xi\nu_\xi\|\widehat{g}(\xi)\|_{\HS}^2\geq \sum_{\substack{\\
        \sqrt{\nu_\xi} \geq 2\frac{\|\nabla g\|_{L^2(G)}}{\|g\|_{L^2(G)}}}}d_\xi\nu_\xi\|\widehat{g}(\xi)\|_{\HS}^2\\
        &\geq 4\frac{\|\nabla g\|_{L^2(G)}^2}{\|g\|_{L^2(G)}^2}\sum_{\substack{\\
        \sqrt{\nu_\xi} \geq 2\frac{\|\nabla g\|_{L^2(G)}}{\|g\|_{L^2(G)}}}}d_\xi\|\widehat{g}(\xi)\|_{\HS}^2;
\end{align*}
hence,
\begin{equation*}
    \sum_{\substack{\\
        \sqrt{\nu_\xi} \geq 2\frac{\|\nabla g\|_{L^2(G)}}{\|g\|_{L^2(G)}}}}d_\xi\|\widehat{g}(\xi)\|_{\HS}^2\leq \frac{\|g\|_{L^2(G)}^2}{4}.
\end{equation*}
Consequently, 
    \begin{equation}\label{ineq_L^2_norm}
    \sum_{\substack{\\
        \sqrt{\nu_\xi} < 2\frac{\|\nabla g\|_{L^2(G)}}{\|g\|_{L^2(G)}}}}d_\xi\|\widehat{g}(\xi)\|_{\HS}^2\leq\frac{3\|g\|_{L^2(G)}^2}{4}. 
\end{equation}
 From  \eqref{ineq_poincare_suff_proof} we obtain
\begin{align*}
    \|Yf\|_{L^2(G)}^2&\geq C^2 \sum_{[\xi]\in\widehat{G}}d_\xi \langle \xi \rangle^{-2(\delta-1)} \|\widehat{f}(\xi)\|_{\HS}^2\geq  C^2\sum_{\substack{\\
        \sqrt{\nu_\xi} < 2\frac{\|\nabla f\|_{L^2(G)}}{\|f\|_{L^2(G)}}}}d_\xi\langle \xi \rangle^{-2(\delta-1)} \|\widehat{f}(\xi)\|_{\HS}^2\\
        &\geq \frac{C^2}{C_1^{\delta-1}}\sum_{\substack{\\
        \sqrt{\nu_\xi} < 2\frac{\|\nabla f\|_{L^2(G)}}{\|f\|_{L^2(G)}}}}d_\xi\nu_\xi^{-(\delta-1)} \|\widehat{f}(\xi)\|_{\HS}^2\\
        &\geq  \frac{C^2}{(4C_1)^{\delta-1}}\frac{\|f\|_{L^2(G)}^{2(\delta-1)}}{\|\nabla f\|_{L^2(G)}^{2(\delta-1)}}\sum_{\substack{\\
        \sqrt{\nu_\xi} < 2\frac{\|\nabla f\|_{L^2(G)}}{\|f\|_{L^2(G)}}}}d_\xi \|\widehat{f}(\xi)\|_{\HS}^2\\
        &\stackrel{\eqref{ineq_L^2_norm}}{\geq}  \frac{C^2}{(4C_1)^{\delta-1}}\frac{\|f\|_{L^2(G)}^{2(\delta-1)}}{\|\nabla f\|_{L^2(G)}^{2(\delta-1)}}\frac{3\|f\|_{L^2(G)}^2}{4},
\end{align*}
where on the third line we used the inequality 
\begin{equation*}
    \langle \xi \rangle \leq \sqrt{C_1\nu_\xi},
\end{equation*}
which holds for every non-trivial $[\xi]\in\widehat{G}$ for $C_1$ as in \eqref{def_C1}. Rearranging the inequality for $\|Yf\|_{L^2(G)}^2$ obtained above, and taking square roots on both sides, yields
\begin{equation*}
   \|\nabla f\|_{L^2(G)}^{\delta-1} \|Yf\|_{L^2(G)}\geq \frac{\sqrt{3}C}{(4\sqrt{C_1})^{\delta-1}}\|f\|_{L^2(G)}^\delta.
\end{equation*}
The proof is complete.
\end{proof}

Note that the proof of Theorem \ref{theo_directional_poincare_general}  still holds if we replace $Y$ by an arbitrary Fourier multiplier (left-invariant continuous linear operator acting on $C^\infty(G))$. Hence, we obtain the following corollary, whose proof is analogous:

\begin{corollary}\label{coro_poincare_multipliers}
     Let $G$ be a compact Lie group, $P:C^\infty(G)\to C^\infty(G)$ a  left-invariant continuous linear operator on a compact Lie group $G$ of order $m\in\R$. Given $\delta\geq 1$, there exists $c>0$ such that 
    \begin{equation*}
        \|\nabla_{G} f\|_{L^2(G)}^{\delta-1}\| Pf\|_{L^2(G)}\geq c\|f\|_{L^2(G)}^\delta,
    \end{equation*}
    for every $f\in (\ker P|_{H^{1}(G)})^\perp$, if and only if there exists $C>0$ such that
\begin{equation}\label{ineq_diof_multipliers}
        \lambda_{\min}^{>0}[\sigma_P(\xi)]\geq C\langle \xi \rangle^{-(\delta-1)},
    \end{equation}
    for all $[\xi]\in\widehat{G}$ such that $\sigma_P(\xi)\neq 0$.
\end{corollary}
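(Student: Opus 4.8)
The plan is to observe that Corollary \ref{coro_poincare_multipliers} is, up to notation, the same statement as Theorem \ref{theo_directional_poincare_general}, with the left-invariant vector field $Y$ (an operator of order $1$) replaced by a general left-invariant continuous linear operator $P$ of order $m\in\R$; so I would simply rerun the proof of Theorem \ref{theo_directional_poincare_general} and point out that nothing in it used more about $Y$ than the facts that $\widehat{Yf}(\xi)=\sigma_Y(\xi)\widehat f(\xi)$ and that the kernel of $Y|_{H^1(G)}$ is characterized column-wise by Proposition \ref{prop_EP_equiv_Fourier}, both of which hold verbatim for any Fourier multiplier $P$ (Proposition \ref{prop_EP_equiv_Fourier} is already stated for general $P$ of order $m$).

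For the necessity direction, I would assume \eqref{ineq_diof_multipliers} fails, extract sequences $[\xi_n]\in\widehat G$ and unit vectors $v_n\in(\ker\sigma_P(\xi_n))^\perp$ with $0<\|\sigma_P(\xi_n)v_n\|_2<\tfrac1n\langle\xi_n\rangle^{-(\delta-1)}$, and build $f_n\in H^1(G)$ with $\widehat{f_n}(\xi_n)$ having $v_n$ as its first column and all other Fourier coefficients zero. By Proposition \ref{prop_EP_equiv_Fourier} one has $f_n\in(\ker P|_{H^1(G)})^\perp$; by Plancherel, $\|f_n\|_{L^2}=1$, $\|\nabla f_n\|_{L^2}^{2(\delta-1)}=\nu_{\xi_n}^{\delta-1}\sim\langle\xi_n\rangle^{2(\delta-1)}$, and $\|Pf_n\|_{L^2}^2=\|\sigma_P(\xi_n)v_n\|_2^2<\tfrac1{n^2}\langle\xi_n\rangle^{-2(\delta-1)}$, so the quotient $\|\nabla f_n\|_{L^2}^{2(\delta-1)}\|Pf_n\|_{L^2}^2/\|f_n\|_{L^2}^{2\delta}\lesssim n^{-2}\to0$, which rules out the inequality. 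One small point to keep honest here: if $\delta=1$ then $\langle\xi_n\rangle$ never needs to go to infinity and the argument still works, while if $\delta>1$ the failure of \eqref{ineq_diof_multipliers} forces $\langle\xi_n\rangle\to\infty$ along the bad sequence; either way the conclusion is the same, so I would just note this in one line.

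For sufficiency, assume \eqref{ineq_diof_multipliers} and take $f\in(\ker P|_{H^1(G)})^\perp$, $f\neq0$. Proposition \ref{prop_EP_equiv_Fourier} says every column of $\widehat f(\xi)$ lies in $(\ker\sigma_P(\xi))^\perp$, so the last displayed inequality of the preliminaries gives $\|\sigma_P(\xi)\widehat f(\xi)\|_{\HS}\ge\lambda_{\min}^{>0}[\sigma_P(\xi)]\,\|\widehat f(\xi)\|_{\HS}$ when $\sigma_P(\xi)\neq0$, and when $\sigma_P(\xi)=0$ Proposition \ref{prop_EP_equiv_Fourier} forces $\widehat f(\xi)=0$ so the bound is trivial. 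Then $\|Pf\|_{L^2}^2=\sum_{[\xi]}d_\xi\|\sigma_P(\xi)\widehat f(\xi)\|_{\HS}^2\ge C^2\sum_{[\xi]}d_\xi\langle\xi\rangle^{-2(\delta-1)}\|\widehat f(\xi)\|_{\HS}^2$, and from here the frequency-localization argument of Theorem \ref{theo_directional_poincare_general} — restricting the sum to $\sqrt{\nu_\xi}<2\|\nabla f\|_{L^2}/\|f\|_{L^2}$, using \eqref{def_C1} and \eqref{ineq_L^2_norm} — yields $\|\nabla f\|_{L^2}^{\delta-1}\|Pf\|_{L^2}\ge\frac{\sqrt3\,C}{(4\sqrt{C_1})^{\delta-1}}\|f\|_{L^2}^\delta$ with no change. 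I do not anticipate a genuine obstacle: the only thing to verify is that the proof of Theorem \ref{theo_directional_poincare_general} nowhere exploited that $Y$ is a vector field (order one, real, first-order differential) beyond being a Fourier multiplier — and it does not, since every estimate is phrased purely in terms of $\sigma_Y(\xi)\widehat f(\xi)$ — so the ``main obstacle'' is really just bookkeeping: carrying the order $m$ through Definition \ref{def_EY} (where $(\ker P)^\perp$ is taken inside $H^{m+1}(G)$ mapping to $H^1(G)$) consistently, and confirming Proposition \ref{prop_EP_equiv_Fourier} applies with $s$ chosen so that $m+s=1$.
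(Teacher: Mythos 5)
Your proposal is correct and matches the paper's own treatment exactly: the paper gives no separate proof of Corollary~\ref{coro_poincare_multipliers} but simply remarks that the proof of Theorem~\ref{theo_directional_poincare_general} carries over verbatim when $Y$ is replaced by an arbitrary Fourier multiplier, which is precisely the observation you flesh out. The bookkeeping points you flag — invoking Proposition~\ref{prop_EP_equiv_Fourier} with $s$ chosen so that $m+s=1$, and noting that every estimate in the sufficiency argument is phrased purely through $\sigma_P(\xi)\widehat f(\xi)$ — are the right things to verify and pose no obstacle.
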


\begin{prop}\label{prop_composed_ineq_general}
Let $P:C^\infty(G)\to C^\infty(G)$ be a left-invariant continuous linear operator of order $m\in\R$ which is also normal with respect to the $L^2$ inner product and such that \eqref{ineq_diof_multipliers} holds for some $C>0$ and $\delta\geq 1$. Then there exists  $c>0$ such that
        \begin{equation}\label{ineq_Y2}
            \|\nabla_G (Pu)\|_{L^2(G)}^{\delta-1}\|P^2u\|_{L^2(G)}\geq c\|Pu\|_{L^2(G)}^\delta,
        \end{equation}
        for every $u\in H^{m+1}(G)$. Moreover, if $u\in H^{m+1}(G)\cap (\ker P|_{H^{1}(G)})^\perp$, then 
         \begin{align}\label{ineq_Y22}
        \|\nabla_G u\|^{\delta(\delta-1)}_{L^2(G)} \|\nabla_G (Pu)\|_{L^2(G)}^{\delta-1}\|P^2u\|_{L^2(G)}&\geq c^{\delta+1}\|u\|_{L^2(G)}^{\delta^2}
    \end{align}
    also holds.
\end{prop}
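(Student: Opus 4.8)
The plan is to obtain \eqref{ineq_Y2} by applying Corollary \ref{coro_poincare_multipliers} to a cleverly chosen argument, and then to bootstrap \eqref{ineq_Y22} from \eqref{ineq_Y2} together with the directional Poincaré inequality applied directly to $u$. The crucial observation is the following: since $P$ is normal with respect to the $L^2$ inner product, its symbol $\sigma_P(\xi)$ is a normal matrix for every $[\xi]$, hence $\ker\sigma_P(\xi)=\ker\sigma_P(\xi)^* = \ker\sigma_{P^*}(\xi)$, and more importantly $\operatorname{ran}\sigma_P(\xi)=(\ker\sigma_P(\xi))^\perp$. This means that for any $v\in H^{m+1}(G)$, the columns of $\widehat{Pv}(\xi)=\sigma_P(\xi)\widehat v(\xi)$ lie in $\operatorname{ran}\sigma_P(\xi)=(\ker\sigma_P(\xi))^\perp$, so by Proposition \ref{prop_EP_equiv_Fourier} we get $Pv\in (\ker P|_{H^1(G)})^\perp$ (noting $Pv\in H^1(G)$ since $P$ has order $m$ and $v\in H^{m+1}$).

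\emph{Step 1 (proof of \eqref{ineq_Y2}).} Given $u\in H^{m+1}(G)$, set $f=Pu$. By the observation above, $f\in (\ker P|_{H^1(G)})^\perp$. Since \eqref{ineq_diof_multipliers} holds, Corollary \ref{coro_poincare_multipliers} gives a constant $c>0$ with
\begin{equation*}
    \|\nabla_G f\|_{L^2(G)}^{\delta-1}\|Pf\|_{L^2(G)}\geq c\|f\|_{L^2(G)}^\delta.
\end{equation*}
Substituting $f=Pu$ (so $Pf=P^2u$) yields exactly \eqref{ineq_Y2}. This step is essentially immediate once the normality observation is in place.

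\emph{Step 2 (proof of \eqref{ineq_Y22}).} Now assume in addition $u\in (\ker P|_{H^1(G)})^\perp$. Apply Corollary \ref{coro_poincare_multipliers} directly to $u$:
\begin{equation*}
    \|\nabla_G u\|_{L^2(G)}^{\delta-1}\|Pu\|_{L^2(G)}\geq c\|u\|_{L^2(G)}^\delta,
\end{equation*}
hence $\|Pu\|_{L^2(G)}\geq c\|u\|_{L^2(G)}^\delta\,\|\nabla_G u\|_{L^2(G)}^{-(\delta-1)}$ (we may assume $u\neq0$, otherwise there is nothing to prove; and then $\nabla_G u\neq 0$ since $u$ has mean value zero — it lies in $(\ker P|_{H^1})^\perp$ which, as $P$ annihilates constants, is contained in the mean-zero subspace). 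Raising \eqref{ineq_Y2} to the power $\delta$ is not quite what we want; instead, take \eqref{ineq_Y2} in the form $\|\nabla_G(Pu)\|^{\delta-1}_{L^2}\|P^2u\|_{L^2}\geq c\|Pu\|^\delta_{L^2}$ and insert the lower bound for $\|Pu\|_{L^2}$ raised to the power $\delta$:
\begin{equation*}
    \|Pu\|_{L^2(G)}^\delta \geq c^\delta \|u\|_{L^2(G)}^{\delta^2}\,\|\nabla_G u\|_{L^2(G)}^{-\delta(\delta-1)}.
\end{equation*}
Combining, $\|\nabla_G(Pu)\|^{\delta-1}_{L^2}\|P^2u\|_{L^2}\geq c^{\delta+1}\|u\|^{\delta^2}_{L^2}\|\nabla_G u\|^{-\delta(\delta-1)}_{L^2}$, which rearranges to \eqref{ineq_Y22}.

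\emph{Main obstacle.} The only genuine point requiring care is justifying that $Pu\in(\ker P|_{H^1(G)})^\perp$, i.e.\ that normality of $P$ forces $\operatorname{ran}\sigma_P(\xi)=(\ker\sigma_P(\xi))^\perp$ at the level of each finite-dimensional symbol; this is standard linear algebra for normal matrices but must be connected to the operator-level normality via the correspondence $\widehat{Pf}(\xi)=\sigma_P(\xi)\widehat f(\xi)$ and $\widehat{P^*f}(\xi)=\sigma_P(\xi)^*\widehat f(\xi)$. A secondary technical point is the membership $Pu\in H^1(G)$ and $P^2u\in L^2(G)$ for $u\in H^{m+1}(G)$, which follows from $P$ having order $m$ (so it maps $H^{m+1}\to H^1$ and $H^{2m+2}$... — here one should note $P^2$ has order $2m$, and $u\in H^{m+1}$ suffices to make $P^2u\in H^{1-m}\subset L^2$ when $m\leq 1$; in general one takes the displayed norms to be the natural ones and the inequality is read as an identity between possibly-infinite quantities, but the cleanest reading is that all terms are finite because $Pu\in H^1$ and then $P(Pu)\in L^2$ by order considerations applied to $Pu\in H^1$ only when $m\le 0$ — this subtlety should be acknowledged, though in the intended applications $m\in\{0,1\}$). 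No deep idea beyond the normality trick is needed.
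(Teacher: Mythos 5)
Your proof is correct in substance and follows essentially the same route as the paper: the key normality observation ($\operatorname{ran}\sigma_P(\xi)=(\ker\sigma_P(\xi))^\perp$, hence $Pu\in(\ker P|_{H^1(G)})^\perp$ by Proposition \ref{prop_EP_equiv_Fourier}) followed by two applications of Corollary \ref{coro_poincare_multipliers}, one to $Pu$ and one to $u$. The only difference in Step 2 is organizational: the paper multiplies \eqref{ineq_Y2} by $\|\nabla_G u\|^{\delta(\delta-1)}_{L^2(G)}$ and then bounds $\bigl(\|\nabla_G u\|^{\delta-1}_{L^2(G)}\|Pu\|_{L^2(G)}\bigr)^\delta\geq c^\delta\|u\|^{\delta^2}_{L^2(G)}$, whereas you first isolate $\|Pu\|_{L^2(G)}$ by dividing by $\|\nabla_G u\|^{\delta-1}_{L^2(G)}$ and then raise to the $\delta$-th power. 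These are the same algebra; but note that your version introduces an unnecessary division, and the justification you offer that $\nabla_G u\neq 0$ (``$P$ annihilates constants, so $u$ has mean value zero'') is incorrect for a general Fourier multiplier $P$ — unlike a vector field, $P$ need not kill constants, so $\sigma_P(\mathbf{1}_G)$ may be nonzero and constants may well lie in $(\ker P|_{H^1(G)})^\perp$. The paper's ordering of the steps sidesteps this by never dividing; rearrange your Step~2 the same way and the side remark becomes unnecessary.
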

\begin{proof}
To prove the first inequality, first note that since $P$ is normal with respect to the $L^2$ inner product, i.e. it commutes with its adjoint, its symbol $\sigma_P(\xi)$ is a  normal matrix for every $[\xi]\in\widehat{G}$. Consequently, we have that 
    \begin{equation*}
        (\ker \sigma_P(\xi))^\perp=\operatorname{ran}\sigma_P(\xi),
    \end{equation*}
    for every $[\xi]\in\widehat{G}$, and so for $u\in H^{m+1}(G)$ we have that
\begin{equation*}
    \widehat{Pu}(\xi)=\sigma_P(\xi)\widehat{u}(\xi)\in\operatorname{ran}\sigma_P(\xi)=(\ker \sigma_P(\xi))^\perp,
\end{equation*}
for every $[\xi]\in\widehat{G}$, which proves that $Pu\in (\ker P|_{H^{1}})^\perp$ by Proposition \ref{prop_EP_equiv_Fourier}  (since $P:H^{m+1}(G)\to H^{1}(G)$ is bounded). Therefore \eqref{ineq_Y2} follows from Corollary  \ref{coro_poincare_multipliers}.

 For the proof of inequality \eqref{ineq_Y22}, note that multiplying both sides of \eqref{ineq_Y2} by $ \|\nabla_G u\|^{\delta(\delta-1)}_{L^2(G)}$ and applying Corollary  \ref{coro_poincare_multipliers} once again (since $u\in (\ker P|_{H^{1}(G)})^\perp$) yields:
    \begin{align*}
        \|\nabla_G u\|^{\delta(\delta-1)}_{L^2(G)} \|\nabla_G (Pu)\|_{L^2(G)}^{\delta-1}\|P^2u\|_{L^2(G)}&\geq c\|\nabla_G u\|^{\delta(\delta-1)}_{L^2(G)} \|Pu\|_{L^2(G)}^\delta\\
        &\geq c\left(\|\nabla_G u\|^{\delta-1}_{L^2(G)} \|Pu\|_{L^2(G)}\right)^\delta\\
        &\geq c^{\delta+1}\|u\|_{L^2(G)}^{\delta^2}.
    \end{align*}
\end{proof}

Since any (real) left-invariant vector field is antisymmetric on $L^2(G)$, it is, in particular, normal. Therefore we obtain the following corollary.
\begin{corollary}
    Let $Y$ be a (real) left-invariant vector field on a compact Lie group $G$ such that \eqref{ineq_min_sin_nonzero_general_poincare} holds for some $C>0$ and $\delta\geq 1$. Then there exists  $c>0$ such that
        \begin{equation*}
            \|\nabla_G (Yu)\|_{L^2(G)}^{\delta-1}\|Y^2u\|_{L^2(G)}\geq c\|Yu\|_{L^2(G)}^\delta,
        \end{equation*}
        for every $u\in H^{2}(G)$. Moreover, if $u\in H^{2}(G)\cap (\ker Y|_{H^1(G)})^\perp=(\ker Y|_{H^2(G)})^\perp$, then 
         \begin{align*}
        \|\nabla_G u\|^{\delta(\delta-1)}_{L^2(G)} \|\nabla_G (Yu)\|_{L^2(G)}^{\delta-1}\|Y^2u\|_{L^2(G)}&\geq c^{\delta+1}\|u\|_{L^2(G)}^{\delta^2}
    \end{align*}
    also holds.
\end{corollary}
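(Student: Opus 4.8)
The plan is to deduce this corollary directly from Proposition~\ref{prop_composed_ineq_general} applied to the operator $P=Y$. First I would recall that a real left-invariant vector field $Y$ on a compact Lie group is antisymmetric with respect to the $L^2(G)$ inner product, i.e.\ $Y^*=-Y$; in particular $Y$ commutes with its adjoint, so $Y$ is normal in the sense required by Proposition~\ref{prop_composed_ineq_general}. Moreover $Y$ is a first-order operator, so here $m=1$ and $H^{m+1}(G)=H^2(G)$. Since by hypothesis \eqref{ineq_min_sin_nonzero_general_poincare} holds — which is precisely condition \eqref{ineq_diof_multipliers} specialized to $P=Y$ — all the hypotheses of Proposition~\ref{prop_composed_ineq_general} are satisfied.

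Consequently, inequality \eqref{ineq_Y2} with $P=Y$ yields at once the first claimed inequality for every $u\in H^2(G)$, and inequality \eqref{ineq_Y22} yields the second one for every $u\in H^2(G)\cap(\ker Y|_{H^1(G)})^\perp$, with the same constant $c>0$ produced by Corollary~\ref{coro_poincare_multipliers}.

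The only remaining point is the identification $H^2(G)\cap(\ker Y|_{H^1(G)})^\perp=(\ker Y|_{H^2(G)})^\perp$ asserted in the statement. This follows from Proposition~\ref{prop_EP_equiv_Fourier}: for any $s\in\R$, a distribution $f\in H^{1+s}(G)$ lies in $(\ker Y|_{H^{1+s}(G)})^\perp$ if and only if every column of $\widehat f(\xi)$ belongs to $(\ker\sigma_Y(\xi))^\perp$ for all $[\xi]\in\widehat G$ — a condition that is independent of $s$. Hence an $f\in H^2(G)$ lies in $(\ker Y|_{H^2(G)})^\perp$ exactly when it lies in $(\ker Y|_{H^1(G)})^\perp$, which is the stated equality of sets; in particular the ``moreover'' clause is meaningful and follows from \eqref{ineq_Y22}.

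Since every step is an immediate invocation of a result already established, I do not expect any genuine obstacle: the corollary is purely the specialization of Proposition~\ref{prop_composed_ineq_general} to (real) vector fields, recorded separately for convenience. The only mildly delicate ingredient worth making explicit is the $s$-independence of the Fourier-side description of $(\ker Y|_{H^s(G)})^\perp$ via Proposition~\ref{prop_EP_equiv_Fourier}, which is what legitimizes the equality in the ``moreover'' part.
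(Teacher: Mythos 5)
Your proof is correct and takes essentially the same route as the paper, which simply observes that a real left-invariant vector field is antisymmetric hence normal on $L^2(G)$ and then invokes Proposition~\ref{prop_composed_ineq_general} with $P=Y$, $m=1$. You are slightly more explicit than the paper in justifying the set identity $H^2(G)\cap(\ker Y|_{H^1(G)})^\perp=(\ker Y|_{H^2(G)})^\perp$ via the $s$-independence of the Fourier-side description in Proposition~\ref{prop_EP_equiv_Fourier}, which is a welcome clarification.
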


\subsubsection{The directional Poincaré inequality on the torus}

{As a consequence of Theorem \ref{theo_directional_poincare_general} we obtain the following corollaries which relate the directional Poincaré inequality and number theoretical properties of $\alpha$, as also evidenced  in \cite{directional_Poincare}.} We  will denote by $\T^n$ the $n$-dimensional torus, for some arbitrary $n\in\N$. Since the directional Poincaré inequality reduces to the usual Poincaré inequality when $n=1$, we will also assume that $n\geq 2$.

In parallel to the main result obtained in \cite{directional_Poincare}, we have the following corollary of Theorem \ref{theo_directional_poincare_general}:

\begin{corollary}\label{coro_irrational}
      For every $\delta\geq 1$, the set $\mathcal{B}_\delta\subset \R^n$ of directions such that for every $\alpha\in \mathcal{B}_\delta$ there exists $c_\alpha>0$ such that \begin{equation*}
        \|\nabla_{\T^n} f\|_{L^2(\T^n)}^{\delta-1}\| \langle \nabla_{\T^n} f,\alpha\rangle \|_{L^2(\T^n)}\geq c_\alpha\|f\|_{L^2(\T^n)}^\delta,
    \end{equation*}
    for every $f\in H^1(\T^n)$ with mean value zero, is given by 
\begin{equation*}
    \mathcal{B}_\delta=\Big\{\alpha\in\R^n : \exists C>0\ \operatorname{s. t.}\ |\langle \xi,\alpha\rangle |\geq C|\xi|^{-(\delta-1)},\ \operatorname{ for\ every }\ \xi\in\Z^n\backslash\{0\}\Big\}.
\end{equation*}
The set $\mathcal{B}_\delta$ is non-empty if and only if $\delta\geq n$, in which case it is uncountable. Moreover, these sets satisfy $\mathcal{B}_\delta\subset \mathcal{B}_{\delta'}$ for $\delta\leq \delta'$.
\end{corollary}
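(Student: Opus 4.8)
The plan is to deduce Corollary~\ref{coro_irrational} directly from Theorem~\ref{theo_directional_poincare_general} applied to $G=\T^n$, together with the explicit computation of the symbol of a constant-coefficient vector field and a classical fact from Diophantine approximation. First I would recall that $\widehat{\T^n}=\Z^n$, that every class $[\xi]$ is one-dimensional so $d_\xi=1$, and that the representation attached to $\xi\in\Z^n$ is $x\mapsto e^{i\langle\xi,x\rangle}$; with the natural choice of basis $X_j=\partial_{x_j}$ one has $\mathcal{L}_{\T^n}=-\sum_j\partial_{x_j}^2$ and $\nu_\xi=|\xi|^2$, hence $\langle\xi\rangle=\sqrt{1+|\xi|^2}\sim|\xi|$ for $\xi\neq0$. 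For $Y=\langle\nabla_{\T^n},\alpha\rangle=\sum_j\alpha_j\partial_{x_j}$ the symbol is the scalar $\sigma_Y(\xi)=i\langle\xi,\alpha\rangle$, so $\sigma_Y(\xi)\neq0$ exactly when $\langle\xi,\alpha\rangle\neq0$, and in that case $\lambda_{\min}^{>0}[\sigma_Y(\xi)]=|\langle\xi,\alpha\rangle|$. Since every $1\times1$ matrix has trivial kernel unless it is zero, the space $(\ker Y|_{H^1(\T^n)})^\perp$ coincides with the mean-value-zero functions together with the functions whose Fourier support avoids the hyperplane $\{\xi:\langle\xi,\alpha\rangle=0\}$; but for the statement as phrased (mean-value zero) I would note that the directional Poincaré inequality restricted to mean-zero functions is equivalent to the one restricted to $(\ker Y)^\perp$ precisely when $\langle\xi,\alpha\rangle\neq0$ for all $\xi\neq0$, and this non-vanishing is forced whenever $\mathcal{B}_\delta$ is non-empty — so the identification of $\mathcal{B}_\delta$ with the stated Diophantine set falls straight out of Theorem~\ref{theo_directional_poincare_general} once one replaces $\langle\xi\rangle$ by the comparable $|\xi|$.

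For the non-emptiness and cardinality claims I would separate two regimes. If $\delta<n$, a standard pigeonhole / Dirichlet-type argument (or a direct Borel--Cantelli computation) shows that for \emph{every} $\alpha\in\R^n$ there are infinitely many $\xi\in\Z^n\setminus\{0\}$ with $|\langle\xi,\alpha\rangle|<|\xi|^{-(\delta-1)}$: indeed the sum over dyadic shells $|\xi|\sim 2^k$ of the measure (in $\alpha$, say on a bounded box) of the slab $\{|\langle\xi,\alpha\rangle|<|\xi|^{-(\delta-1)}\}$ is comparable to $\sum_k 2^{kn}\cdot 2^{-k}\cdot 2^{-k(\delta-1)}=\sum_k 2^{k(n-\delta)}$, which diverges when $\delta<n$; hence $\mathcal{B}_\delta=\emptyset$. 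If $\delta\geq n$ the same sum converges, so by Borel--Cantelli almost every $\alpha$ lies in $\mathcal{B}_\delta$, which in particular makes $\mathcal{B}_\delta$ uncountable (one could alternatively exhibit an explicit badly approximable $\alpha$, e.g. built from a basis of a degree-$n$ number field, to get non-emptiness without measure theory). The nesting $\mathcal{B}_\delta\subset\mathcal{B}_{\delta'}$ for $\delta\leq\delta'$ is immediate from $|\xi|^{-(\delta-1)}\geq|\xi|^{-(\delta'-1)}$ once $|\xi|\geq1$, which holds for all $\xi\in\Z^n\setminus\{0\}$.

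The step requiring the most care is the bookkeeping around "$f$ with mean value zero" versus "$f\in(\ker Y|_{H^1})^\perp$": Theorem~\ref{theo_directional_poincare_general} is stated for the latter space, while the corollary is stated for the former. I would handle this by observing that $\ker Y|_{H^1(\T^n)}$ is spanned by the constants together with all exponentials $e^{i\langle\xi,x\rangle}$ with $\langle\xi,\alpha\rangle=0$; if $\alpha$ has a rational dependence, i.e.\ some $\xi\neq0$ with $\langle\xi,\alpha\rangle=0$, then the mean-zero space strictly contains $(\ker Y)^\perp$ and the inequality fails on that difference exactly as in the necessity part of Proposition~\ref{prop_necessary} (take $f=e^{i\langle\xi,x\rangle}$, so $Yf=0$ but $f\neq0$), whence such $\alpha\notin\mathcal{B}_\delta$; and this is consistent with the Diophantine description, since $\langle\xi,\alpha\rangle=0$ for some nonzero $\xi$ means no constant $C>0$ can work. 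When no such $\xi$ exists, the two subspaces agree up to the constants and the inequality on mean-zero functions is literally the inequality of Theorem~\ref{theo_directional_poincare_general}. So the only genuine content beyond Theorem~\ref{theo_directional_poincare_general} is the convergence/divergence dichotomy at the threshold $\delta=n$, which is the number-theoretic heart and the place I would write out the dyadic estimate explicitly.
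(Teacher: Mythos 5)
Your reduction to Theorem~\ref{theo_directional_poincare_general} is the same route the paper takes: identify $\widehat{\T^n}\cong\Z^n$, compute the scalar symbol $\sigma_Y(\xi)=i\langle\xi,\alpha\rangle$, note $\langle\xi\rangle\sim|\xi|$, and translate the singular-value bound into the Diophantine condition. Your bookkeeping around mean-value zero versus $(\ker Y|_{H^1})^\perp$ is in fact spelled out more explicitly than in the paper, and it is correct: if some $\xi\neq0$ has $\langle\xi,\alpha\rangle=0$ then $e^{i\langle\xi,x\rangle}$ kills the inequality on the mean-zero space, and otherwise the two spaces coincide. For the Diophantine part the paper simply invokes Dirichlet's theorem and Perron (as in Steinerberger's original paper), whereas you attempt to supply the details, and here there are two genuine errors.

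First, your parenthetical ``(or a direct Borel--Cantelli computation)'' for the $\delta<n$ direction is not correct: the divergence half of Borel--Cantelli would at best give that \emph{almost every} $\alpha$ admits infinitely many $\xi$ with $|\langle\xi,\alpha\rangle|<|\xi|^{-(\delta-1)}$ (and even that requires some quasi-independence), while the corollary needs it for \emph{every} $\alpha$. The pigeonhole/Dirichlet argument you name first is the one that actually delivers the universal statement, so the sum-of-measures computation cannot be the justification for the claim as you wrote it; it is a heuristic for the threshold, not a proof that $\mathcal{B}_\delta=\emptyset$.

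Second, and more seriously, the claim ``If $\delta\geq n$ the same sum converges, so by Borel--Cantelli almost every $\alpha$ lies in $\mathcal{B}_\delta$'' fails at the endpoint $\delta=n$: there the dyadic sum $\sum_k 2^{k(n-\delta)}=\sum_k 1$ diverges, and indeed $\mathcal{B}_n$ is Lebesgue null, so no measure-theoretic argument can produce nonemptiness. The explicit construction you mention in passing (badly approximable vectors, e.g.\ from a basis of a real number field of degree $n$, as in Perron) is therefore not an optional ``alternative without measure theory'' but the required argument for the boundary case $\delta=n$; Borel--Cantelli only handles $\delta>n$. Once you promote the explicit construction to the main argument at $\delta=n$ and restrict the Borel--Cantelli reasoning to $\delta>n$ (and drop the Borel--Cantelli parenthetical in the $\delta<n$ direction), the proof is sound and matches the paper's intent, which cites exactly Dirichlet and Perron for these facts.
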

\begin{proof}
    If there exists $c_\alpha>0$ as in the statement, then $\alpha\in \mathcal{B}_\delta$ by Theorem \ref{theo_directional_poincare_general}, since in this case the symbol of  $Y=\langle \nabla_{\T^n},\alpha\rangle=\sum_{j=1}^n \alpha_j\partial_{x_j}$ is given by $\sigma_Y(\xi)=i\sum_{j=1}^n \alpha_j\xi_j=i\langle \xi,\alpha\rangle$, with the usual identification of $\widehat{\T^n}\sim\Z^n$. Conversely, if $\alpha\in\mathcal{B}_\delta$ , then the symbol of $Y$ as above vanishes only at $\xi=0$, hence $f\in H^1(\T^n)$ is in $(\ker Y|_{H^1(G)})^\perp$ if and only if $\widehat{f}(0)=0$, or equivalently, has mean-value zero. Therefore the existence of $c_\alpha$ also follows from Theorem \ref{theo_directional_poincare_general}. The inclusion $\mathcal{B}_{\delta}\subset \mathcal{B}_{\delta}$ if $\delta\leq \delta'$ is evident from the definition of the sets $\mathcal{B}_\delta$, and finally the fact that they are non-empty if and only if $\delta\geq n$ follows from Dirichlet's approximation theorem (see also Perron \cite{Perron}), as mentioned in \cite{directional_Poincare}.
\end{proof}

Theorem \eqref{theo_directional_poincare_general} also yields the following corollary, which allows for directions whose coordinates are linearly dependent over $\mathbb{Q}$:
\begin{corollary}\label{coro_rational}
     
     There exists a set $\mathcal{B}_1\subset \R^n$ such that for every  $\alpha\in\mathcal{B}_1$ there exists $c_\alpha>0$ such that
\begin{equation}\label{ineq_rational}
        \| \langle \nabla_{\T^n} f,\alpha\rangle \|_{L^2(\T^n)}\geq c_\alpha\|f\|_{L^2(\T^n)},
    \end{equation}
    for every $f\in H^1(\T^n)$, satisfying $\widehat{f}(\xi)=0$ whenever $\sum_{j=1}^n \alpha_j\xi_j=0$. More precisely, $\mathcal{B}_1$ is the set of all $\alpha\in \R^n$ such that its entries are all linearly dependent over $\mathbb{Q}$. Consequently, $\mathcal{B}_1$ is uncountable but Lebesgue-null. 
\end{corollary}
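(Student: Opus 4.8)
The plan is to read off \eqref{ineq_rational} from Theorem~\ref{theo_directional_poincare_general} applied with $\delta = 1$ to the vector field $Y = \langle \nabla_{\T^n},\alpha\rangle = \sum_{j=1}^{n}\alpha_j\partial_{x_j}$, and then settle an elementary number-theoretic question. Under the identification $\widehat{\T^n}\cong\Z^n$ every representation is one-dimensional, so $\sigma_Y(\xi) = i\langle\xi,\alpha\rangle\in\C^{1\times 1}$; hence $\lambda_{\min}^{>0}[\sigma_Y(\xi)] = |\langle\xi,\alpha\rangle|$ whenever $\sigma_Y(\xi)\neq 0$, and by Proposition~\ref{prop_EP_equiv_Fourier} the subspace $(\ker Y|_{H^1(\T^n)})^\perp$ is exactly the set of $f\in H^1(\T^n)$ with $\widehat f(\xi) = 0$ whenever $\langle\xi,\alpha\rangle = 0$, i.e.\ the class of test functions in \eqref{ineq_rational}. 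With $\delta = 1$ the weight $\langle\xi\rangle^{-(\delta-1)}$ is identically $1$, so Theorem~\ref{theo_directional_poincare_general} tells us that \eqref{ineq_rational} holds for some $c_\alpha>0$ if and only if
\[
    \inf\bigl\{\, |\langle\xi,\alpha\rangle| : \xi\in\Z^n,\ \langle\xi,\alpha\rangle\neq 0 \,\bigr\} > 0 .
\]
It then remains to identify the set of $\alpha$ for which this infimum is positive, which I claim is $\mathcal{B}_1 = \{\alpha\in\R^n : \dim_{\mathbb{Q}}\operatorname{span}_{\mathbb{Q}}\{\alpha_1,\dots,\alpha_n\}\le 1\}$, i.e.\ the $\alpha$ whose entries are pairwise linearly dependent over $\mathbb{Q}$.

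For the "if" direction I would note that $\dim_{\mathbb{Q}}\operatorname{span}_{\mathbb{Q}}\{\alpha_1,\dots,\alpha_n\}\le 1$ means $\alpha = tv$ for some $t\in\R$ and $v\in\Z^n$ (clear the common denominator of the rational ratios $\alpha_i/\alpha_{j_0}$ for a fixed nonzero entry $\alpha_{j_0}$; the case $\alpha = 0$ is trivial). Then $\langle\xi,\alpha\rangle = t\langle\xi,v\rangle$ with $\langle\xi,v\rangle\in\Z$, so $\langle\xi,\alpha\rangle\neq 0$ forces $|\langle\xi,v\rangle|\ge 1$ and hence $|\langle\xi,\alpha\rangle|\ge |t|>0$, giving \eqref{ineq_rational} with $c_\alpha = |t|$. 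For the "only if" direction, suppose $\dim_{\mathbb{Q}}\operatorname{span}_{\mathbb{Q}}\{\alpha_1,\dots,\alpha_n\}\ge 2$. I would first observe that two of the coordinates, say $\alpha_1,\alpha_2$ after relabelling, must be linearly independent over $\mathbb{Q}$ --- otherwise every nonzero entry would be a rational multiple of a single one, forcing dimension $\le 1$ --- so $\theta := \alpha_2/\alpha_1$ is irrational and $\alpha_1\neq 0$. Restricting to $\xi = (\xi_1,\xi_2,0,\dots,0)$ gives $\langle\xi,\alpha\rangle = \alpha_1(\xi_1 + \xi_2\theta)$, and Dirichlet's approximation theorem furnishes, for each $Q\in\N$, integers $p,q$ with $1\le q\le Q$ and $0 < |q\theta - p| < 1/Q$ (nonzero because $\theta\notin\mathbb{Q}$); taking $\xi_1 = -p$, $\xi_2 = q$ produces $\xi\neq 0$ with $0 < |\langle\xi,\alpha\rangle| < |\alpha_1|/Q$, so the infimum above equals $0$ and no $c_\alpha$ can exist.

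Combining the two implications gives $\mathcal{B}_1 = \bigcup_{v\in\Z^n\setminus\{0\}}\R v$, a countable union of one-dimensional linear subspaces of $\R^n$; since $n\ge 2$, each such line has $n$-dimensional Lebesgue measure zero, whence $\mathcal{B}_1$ is Lebesgue-null, while it is plainly uncountable (it contains $\R e_1$). I expect no genuine difficulty here beyond one point that must be stated precisely: the defining condition of $\mathcal{B}_1$ is that $\alpha$ lie on a \emph{rational} line --- dimension at most one over $\mathbb{Q}$ --- not merely that the $\alpha_j$ satisfy some rational relation; for $n\ge 3$ the latter is strictly weaker and the inequality really does fail outside $\mathcal{B}_1$ (e.g.\ for $\alpha = (1,\sqrt 2,1+\sqrt 2)$ when $n=3$). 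Once that is pinned down, the "only if" construction is just Dirichlet's theorem on a well-chosen pair of coordinates, and the reduction in the first paragraph is routine bookkeeping with Plancherel's identity and Proposition~\ref{prop_EP_equiv_Fourier}.
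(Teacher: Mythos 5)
Your proposal is correct and follows the same route as the paper: reduce via Theorem~\ref{theo_directional_poincare_general} with $\delta=1$, show the required uniform lower bound $|\langle\xi,\alpha\rangle|\ge C$ by clearing denominators when $\alpha$ lies on a rational line, and refute it via Dirichlet's theorem on a $\mathbb{Q}$-independent pair of coordinates otherwise. Your explicit remark that ``linearly dependent over $\mathbb{Q}$'' must mean $\dim_{\mathbb{Q}}\operatorname{span}_{\mathbb{Q}}\{\alpha_1,\dots,\alpha_n\}\le 1$ (illustrated by $\alpha=(1,\sqrt2,1+\sqrt2)$), and your identification of $\mathcal{B}_1$ as the countable union $\bigcup_{v\in\Z^n\setminus\{0\}}\R v$ to justify the null/uncountable claim, are useful clarifications of points the paper leaves implicit.
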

\begin{proof}
    By Theorem \ref{theo_directional_poincare_general} inequality \eqref{ineq_rational} holds for some $c>0$ if and only if the symbol of $Y=\langle \nabla_{\T^n},\alpha\rangle=\sum_{j=1}^n \alpha_j\partial_{x_j}$ satisfies inequality \eqref{ineq_min_sin_nonzero_general_poincare} for some $C>0$ and  $\delta=1$. 
    As mentioned in the proof of Corollary \eqref{coro_irrational}, the necessary and sufficient condition on the symbol given in Theorem \eqref{theo_directional_poincare_general} is, in this case ($G=\T^n$, $\delta=1$), equivalent to
    \begin{equation}\label{ineq_symbol_rational}
        \left|\sum_{j=1}^n \alpha_j\xi_j\right|\geq C,
    \end{equation}
     for every $\xi\in\Z^n$ such that $\sum_{j=1}^n \alpha_j\xi_j\neq 0$.  Suppose $\alpha\neq 0$ and assume first that the entries of $\alpha$ are linearly dependent over $\mathbb{Q}$. If $\alpha=0$ there is nothing to prove. For  $\alpha\neq 0$, we can write $\alpha=\lambda(\frac{p_1}{q_1},\dots,\frac{p_n}{q_n})$, for some  $p_j,q_j\in \Z$, $q_j\neq 0$, $1\leq j\leq n$, and $\lambda\in \R\backslash\{0\}$.
    Then
    \begin{align*}
         \left|\sum_{j=1}^n \alpha_j\xi_j\right|&=|\lambda|\left|\frac{p_1}{q_1}\xi_1+\dots+\frac{p_n}{q_n}\xi_n\right|=\left|\frac{\lambda}{q_1\cdots q_n}\right|\left|p_1q_2\cdots q_n\xi_1+\dots+p_nq_1\cdots q_{n-1} \xi_n\right|,
    \end{align*}
    for every $\xi\in\Z^n$. Note that the expression above is a product of the form $A\cdot B_\xi$, where $A\in \R\backslash\{0\}$ is a non-zero constant and $B_\xi$ is a non-negative integer which depends on $\xi$. Hence $B_\xi \geq 1$ and $ \left|\sum_{j=1}^n \alpha_j\xi_j\right|\geq \left|\frac{\lambda}{q_1\cdots q_n}\right|$ whenever $\sum_{j=1}^n \alpha_j\xi_j\neq 0$. Next assume that inequality \eqref{ineq_symbol_rational} holds, and assume that the entries of $\alpha$ are not linearly dependent over $\mathbb{Q}$. Without loss of generality we may assume that $\alpha_1$ and $\alpha_2$ are not linearly dependent over $\mathbb{Q}$, and so $\alpha_2/\alpha_1\not\in\mathbb{Q}$. Therefore, for every $\xi \in \Z^2\times \{0\}\subset \Z^n$ we have that
   \begin{align*}
       \left|\sum_{j=1}^n \alpha_j\xi_j\right|&=|\alpha_1|\left|\xi_1+\frac{\alpha_2}{\alpha_1}\xi_2\right|\geq C,
   \end{align*}
   an absurd by Dirichlet's approximation theorem since $\alpha\not\in\mathbb{Q}$.
\end{proof}

Recall that the irrationality measure of $\alpha\in\R$ is the infimum over all $\mu>0$ such that
\begin{equation}\label{ineq_irrationality_measure}
    \left|\alpha-\frac{\xi_1}{\xi_2}\right|\geq \frac{1}{(\xi_2)^{\mu}},
\end{equation}
for all $\xi_1,\xi_2\in\Z^2$ with $|\xi_2|$ sufficiently large. Evidently the irrationality measure of a rational number is $1$, and, in a famous result, Roth proved in \cite{Roth1955} that the irrationality measure of every irrational algebraic number is $2$. Furthermore, Liouville's Theorem tells us that if $\alpha$ is algebraic of degree $2$, then  \eqref{ineq_irrationality_measure} holds for $\mu=2$ as well.

\begin{corollary}\label{coro_t2}
      Let $\alpha=(\alpha_1,\alpha_2)\in\R^2$. There exists $c_\alpha>0$ such that     \begin{equation*}\label{eq_coro_torus_2}
        \|\nabla_{\T^2} f\|_{L^2(\T^2)}^{\delta-1}\| \langle \nabla_{\T^2} f,\alpha\rangle \|_{L^2(\T^2)}\geq c_\alpha\|f\|_{L^2(\T^2)}^\delta,
    \end{equation*}
     for some $\delta\geq 1$ and every $f\in H^1(\T^2)$ satisfying $\widehat{f}(\xi)=0$ whenever $\langle \xi,\alpha\rangle=0$, if and only if $Y=\langle \nabla_{\T^2},\alpha\rangle$ is globally solvable, or equivalently, 
    one of the following conditions holds:
    \begin{enumerate}
        \item[\it (1)] $\alpha_1=0$ or $\alpha_2=0$---in this case inequality \eqref{eq_coro_torus_2} holds for any $\delta\geq1$;
        \item[\it (2)] $\alpha_1\alpha_2\neq0$ and $\alpha_2/\alpha_1$  is rational or an irrational non-Liouville number; in this case:
        \begin{enumerate}
            \item[{\it (2.1)}] Inequality \eqref{eq_coro_torus_2} holds for any $\delta$ greater than the irrationality measure of $\alpha_2/\alpha_1$;
            \item[{\it (2.2)}] If $\alpha_2/\alpha_1$ is algebraic of degree $2$, then we can also take $\delta=2$.
        \end{enumerate}
    \end{enumerate}
\end{corollary}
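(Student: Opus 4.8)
The plan is to reduce inequality \eqref{eq_coro_torus_2} to a Diophantine condition on the symbol of $Y$ via Theorem \ref{theo_directional_poincare_general}, and then to carry out an elementary number-theoretic case analysis. Identifying $\widehat{\T^2}$ with $\Z^2$, the symbol of $Y=\langle\nabla_{\T^2},\alpha\rangle=\alpha_1\partial_{x_1}+\alpha_2\partial_{x_2}$ is the scalar $\sigma_Y(\xi)=i\langle\xi,\alpha\rangle$ and $d_\xi=1$ for every $\xi$. By Proposition \ref{prop_EP_equiv_Fourier}, a function $f\in H^1(\T^2)$ lies in $(\ker Y|_{H^1})^\perp$ exactly when $\widehat f(\xi)=0$ whenever $\langle\xi,\alpha\rangle=0$, i.e.\ precisely under the hypothesis on $f$ in the statement; moreover $\lambda_{\min}^{>0}[\sigma_Y(\xi)]=|\langle\xi,\alpha\rangle|$ when this is nonzero, and $\langle\xi\rangle=\sqrt{1+|\xi|^2}$ is comparable to $|\xi|$ for $\xi\neq0$. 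Hence Theorem \ref{theo_directional_poincare_general} yields, for each fixed $\delta\geq1$: inequality \eqref{eq_coro_torus_2} holds for some $c_\alpha>0$ if and only if there is $C>0$ with
\begin{equation}\label{diophantine_t2}
    |\langle\xi,\alpha\rangle|\geq C|\xi|^{-(\delta-1)}\quad\text{for all }\ \xi\in\Z^2\setminus\{0\}\ \text{ with }\ \langle\xi,\alpha\rangle\neq0.
\end{equation}
It therefore remains to decide for which $\alpha$ the condition \eqref{diophantine_t2} holds for \emph{some} $\delta\geq1$, together with the admissible exponents; the equivalence of this with global solvability of $Y$ comes from the symbolic characterization of global solvability given in Section \ref{sect_solvability} (and, for constant-coefficient vector fields on the torus, is classical).

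For the sufficiency direction and the exponents, first suppose $\alpha_1=0$ or $\alpha_2=0$ (case (1)). If $\alpha=0$ there is nothing to prove; otherwise, say $\alpha_2=0$, $\alpha_1\neq0$, so $\langle\xi,\alpha\rangle=\alpha_1\xi_1$ vanishes exactly when $\xi_1=0$, and for $\xi_1\neq0$ one has $|\langle\xi,\alpha\rangle|=|\alpha_1||\xi_1|\geq|\alpha_1|\geq|\alpha_1||\xi|^{-(\delta-1)}$ for every $\delta\geq1$ (since $|\xi|\geq1$), so \eqref{diophantine_t2} holds for all $\delta\geq1$. Now suppose $\alpha_1\alpha_2\neq0$ and put $c=\alpha_2/\alpha_1$, so $|\langle\xi,\alpha\rangle|=|\alpha_1|\,|\xi_1+c\xi_2|$ and \eqref{diophantine_t2} amounts to the same estimate for $|\xi_1+c\xi_2|$: the case $\xi_2=0,\ \xi_1\neq0$ is immediate, and for $\xi_2\neq0$ one writes $|\xi_1+c\xi_2|=|\xi_2|\,|c-(-\xi_1/\xi_2)|$ and uses $|\xi_2|\leq|\xi|$, so that \eqref{diophantine_t2} for $\delta$ follows once there is $C_0>0$ with $|c-p/q|\geq C_0|q|^{-\delta}$ for all $p\in\Z$, $q\in\Z\setminus\{0\}$ with $p/q\neq c$. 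If $c$ is rational this holds with $\delta=1$ by clearing denominators; if $c$ is irrational with irrationality measure $\mu<\infty$, then for any $\delta>\mu$ the definition of $\mu$ gives the estimate for $|q|$ large, and the finitely many remaining $q$ are handled because $\min_{p\in\Z}|c-p/q|>0$ for each $q$ as $c$ is irrational. This establishes (1), (2.1) (the irrationality measure of a rational being $1$), and (2.2) follows in the same way from Liouville's theorem, which supplies the estimate with $\delta=2$ when $c$ is algebraic of degree $2$.

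For the converse, suppose $\alpha_1\alpha_2\neq0$ and $c=\alpha_2/\alpha_1$ is an irrational Liouville number; I claim \eqref{diophantine_t2} fails for every $\delta\geq1$. Fix $\delta\geq1$. The Liouville property supplies, for each integer $k\geq1$, a pair $(p_k,q_k)$ with $q_k\geq2$ and $0<|c-p_k/q_k|<q_k^{-k}$; since $\min_{p}|c-p/q|$ is bounded below by a positive constant over any finite set of denominators $q$, we must have $q_k\to\infty$ along a subsequence, which we relabel. As $|p_k/q_k|\leq|c|+1$ for large $k$, the vectors $\xi^{(k)}=(-p_k,q_k)\in\Z^2$ have $|\xi^{(k)}|$ comparable to $q_k$, while $0<|\langle\xi^{(k)},\alpha\rangle|=|\alpha_1|\,q_k\,|c-p_k/q_k|<|\alpha_1|\,q_k^{1-k}$, whence $|\langle\xi^{(k)},\alpha\rangle|\,|\xi^{(k)}|^{\delta-1}\lesssim q_k^{\delta-k}\to0$ as $k\to\infty$, contradicting \eqref{diophantine_t2}. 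By Theorem \ref{theo_directional_poincare_general} the inequality \eqref{eq_coro_torus_2} then fails for every $\delta$, and $Y$ is not globally solvable. Combining the two directions with the dichotomy that an irrational number is either Liouville or non-Liouville gives the asserted equivalence and the stated exponents.

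The genuinely delicate points I anticipate are organizational rather than deep: identifying $(\ker Y|_{H^1})^\perp$ correctly through Proposition \ref{prop_EP_equiv_Fourier} in the degenerate cases (1), where $\sigma_Y$ vanishes on a whole sublattice of $\Z^2$ so that this subspace is strictly larger than the mean-zero subspace appearing in Corollary \ref{coro_irrational}; the passage between the bivariate estimate in \eqref{diophantine_t2} and the one-dimensional rational-approximation estimate $|c-p/q|\gtrsim|q|^{-\delta}$, including the finitely many small denominators; and tracking the exponents so as to recover exactly the bounds $\delta>\mu$ and $\delta=2$. The equivalence of $(1)$–$(2)$ with global solvability of $Y$ is quoted from Section \ref{sect_solvability} (or the classical theory) rather than reproved here.
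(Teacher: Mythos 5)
Your proposal is correct, and it carries out exactly the argument the paper intends: the paper gives no explicit proof of Corollary~\ref{coro_t2}, leaving it to be deduced from Theorem~\ref{theo_directional_poincare_general} together with the Diophantine facts (irrationality measure, Roth, Liouville) recalled just before the statement and the characterization of global solvability in Section~\ref{sect_solvability}. The only stylistic difference worth noting is that the paper's Corollaries~\ref{coro_irrational} and~\ref{coro_rational} already cover most of the case analysis, so one could cite them directly for case (2) rather than re-deriving the one-variable estimate $|c-p/q|\gtrsim|q|^{-\delta}$; but your self-contained reduction from Theorem~\ref{theo_directional_poincare_general} is at least as clean and makes it easier to track the precise exponents $\delta>\mu$ and $\delta=2$. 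The identification of $(\ker Y|_{H^1})^\perp$ via Proposition~\ref{prop_EP_equiv_Fourier}, the scalar form $\sigma_Y(\xi)=i\langle\xi,\alpha\rangle$, the handling of the degenerate case where a coordinate of $\alpha$ vanishes, the converse via Liouville approximants, and the equivalence with global solvability via Proposition~\ref{prop_globally_solvable} are all correctly executed.
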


Finally, comparing the necessary and sufficient condition in Theorem \ref{theo_directional_poincare_general} with the notion of global solvability with loss of derivatives in \cite{kk_loss}, we obtain the following corollary.

\begin{corollary}
   Let $\alpha\in\R^n$ and $Y=\langle\nabla_{\T^n},\alpha\rangle=\alpha_1\partial_{x_1}+\dots+\partial_n\partial_{x_n}$. Given $\delta\geq 1$, the following are equivalent.
    \begin{itemize}
        \item[{\it (1)}] For every  \( f \in 
        (\ker Y|_{H^1(\T^n)})^\perp\), there exists \( u \in H^{2 - \delta}(\mathbb{T}^n) \) such that \( Yu = f \)
        \item[{\it (2)}] There exists $c>0$ such that
        \begin{equation*}
            \|\nabla_G f\|_{L^2(\T^n)}^{\delta-1}\|\langle \nabla_{\T^n}f,\alpha\rangle\|_{L^2(\T^n)}\geq c\|f\|_{L^2(\T^n)}^\delta,
        \end{equation*}
        for every $f\in (\ker Y|_{H^1(G)})^\perp$.   
    \end{itemize}
\end{corollary}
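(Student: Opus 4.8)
The plan is to show that conditions \emph{(1)} and \emph{(2)} are each equivalent to the arithmetic condition
\begin{equation}\label{eq_plan_arith}
    \exists\,C>0 \ \text{ such that } \ |\langle\xi,\alpha\rangle|\geq C\langle\xi\rangle^{-(\delta-1)} \ \text{ for every } \xi\in\Z^n \text{ with } \langle\xi,\alpha\rangle\neq 0,
\end{equation}
whence their equivalence is immediate. The equivalence of \emph{(2)} with \eqref{eq_plan_arith} is just Theorem \ref{theo_directional_poincare_general} specialized to $G=\T^n$: as already noted in the proof of Corollary \ref{coro_irrational}, under $\widehat{\T^n}\simeq\Z^n$ the symbol of $Y$ is the scalar $\sigma_Y(\xi)=i\langle\xi,\alpha\rangle$, so $\sigma_Y(\xi)\neq 0$ iff $\langle\xi,\alpha\rangle\neq0$, and in that case $\lambda_{\min}^{>0}[\sigma_Y(\xi)]=|\langle\xi,\alpha\rangle|$; thus \eqref{ineq_min_sin_nonzero_general_poincare} reads exactly as \eqref{eq_plan_arith}.

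It remains to prove \emph{(1)} $\iff$ \eqref{eq_plan_arith}, which I would do directly on the Fourier side using Proposition \ref{prop_EP_equiv_Fourier} and the equivalence $\langle\xi\rangle\sim|\xi|$ on the torus coming from \eqref{def_C1}. For the forward direction, assuming \eqref{eq_plan_arith} and taking $f\in(\ker Y|_{H^1(\T^n)})^\perp$ (so $\widehat f(\xi)=0$ whenever $\langle\xi,\alpha\rangle=0$ by Proposition \ref{prop_EP_equiv_Fourier}), I set $\widehat u(\xi)=\widehat f(\xi)/(i\langle\xi,\alpha\rangle)$ for $\langle\xi,\alpha\rangle\neq0$ and $\widehat u(\xi)=0$ otherwise; then $Yu=f$, and \eqref{eq_plan_arith} gives
\begin{equation*}
    \|u\|_{H^{2-\delta}(\T^n)}^2=\sum_{\langle\xi,\alpha\rangle\neq0}\langle\xi\rangle^{2(2-\delta)}\frac{|\widehat f(\xi)|^2}{|\langle\xi,\alpha\rangle|^2}\lesssim\sum_{\xi\in\Z^n}\langle\xi\rangle^{2(2-\delta)+2(\delta-1)}|\widehat f(\xi)|^2=\|f\|_{H^1(\T^n)}^2<\infty,
\end{equation*}
which is \emph{(1)}. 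For the converse, I argue by contrapositive: if \eqref{eq_plan_arith} fails one extracts pairwise distinct $\xi_n\in\Z^n$ with $|\xi_n|\to\infty$ and $0<|\langle\xi_n,\alpha\rangle|<\tfrac1n\langle\xi_n\rangle^{-(\delta-1)}$ (a fixed $\xi$ obeying this for all $n$ would force $\langle\xi,\alpha\rangle=0$, so infinitely many distinct $\xi_n$ occur and one passes to a subsequence), and one defines $f$ by $\widehat f(\xi_n)=c_n$, $\langle\xi_n\rangle^2 c_n^2=n^{-2}$, and $\widehat f=0$ elsewhere. Then $f\in H^1(\T^n)\setminus\{0\}$ lies in $(\ker Y|_{H^1(\T^n)})^\perp$ by Proposition \ref{prop_EP_equiv_Fourier}, while any distributional solution of $Yu=f$ must have $\widehat u(\xi_n)=c_n/(i\langle\xi_n,\alpha\rangle)$, giving $\|u\|_{H^{2-\delta}(\T^n)}^2\geq\sum_n n^2\langle\xi_n\rangle^2 c_n^2=\sum_n 1=\infty$; so no such $u\in H^{2-\delta}(\T^n)$ exists and \emph{(1)} fails.

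I do not expect a genuine obstacle here: the whole argument rests on Theorem \ref{theo_directional_poincare_general} (already proved) plus elementary Fourier bookkeeping. The one point deserving care is the exponent arithmetic — the scheme closes precisely because $2(2-\delta)+2(\delta-1)=2$, i.e.\ the loss of $\delta-1$ derivatives in inverting $Y$ is exactly the gap between $H^1$ and $H^{2-\delta}$ — and the small technical step of verifying that the extracted frequencies $\xi_n$ may be taken distinct with $|\xi_n|\to\infty$, so that the constructed $f$ is a genuine element of $H^1(\T^n)$.
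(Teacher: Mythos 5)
Your proof is correct and follows essentially the route the paper intends, but with the details made explicit. The paper simply asserts the corollary ``by comparing the necessary and sufficient condition in Theorem~\ref{theo_directional_poincare_general} with the notion of global solvability with loss of derivatives in \cite{kk_loss}''; you prove this equivalence from scratch, showing that both \emph{(1)} and \emph{(2)} are equivalent to the Diophantine condition $|\langle\xi,\alpha\rangle|\geq C\langle\xi\rangle^{-(\delta-1)}$ for $\langle\xi,\alpha\rangle\neq0$. The key checks all hold: on $\T^n$ the symbol $\sigma_Y(\xi)=i\langle\xi,\alpha\rangle$ is scalar, so $\lambda_{\min}^{>0}[\sigma_Y(\xi)]=|\langle\xi,\alpha\rangle|$ and \emph{(2)}$\iff$Diophantine is exactly Theorem~\ref{theo_directional_poincare_general}; your exponent bookkeeping $2(2-\delta)+2(\delta-1)=2$ for the forward direction of \emph{(1)}$\iff$Diophantine is right; and for the converse, the extraction of pairwise distinct $\xi_n$ is sound, since if a single $\xi$ satisfied $0<|\langle\xi,\alpha\rangle|<\tfrac1n\langle\xi\rangle^{-(\delta-1)}$ for infinitely many $n$ it would force $\langle\xi,\alpha\rangle=0$, and distinct lattice points must escape to infinity, which guarantees $f\in H^1(\T^n)$ while $\|u\|_{H^{2-\delta}}=\infty$ for any candidate solution. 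The net effect is a self-contained argument for a statement the paper leaves as a citation.
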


\subsection{\texorpdfstring{Directional Poincaré Inequality on $\mathbb{S}^3$}{Directional Poincaré Inequality on the 3-sphere}}\label{section_sphere}

Following the notation in \cite{RT2010_book}, consider the basis of left-invariant vector fields $\{D_1,D_2,D_3\}$ on the compact Lie group  $\mathbb{S}^3
\cong SU(2)$, which satisfies $\mathcal{L}_{\mathbb{S}^3}=-(D_1^2+D_2^2+D_3^2)$, and let $\nabla_{\S}$ be the associated gradient operator.

Before proving our main result, we present the following technical lemma.

\begin{lemma}\label{lemma_tridiagonal_eig}
    Let $\ell\in\N_0$ and consider the one-parameter families of tridiagonal matrices given by 
    \begin{align*}
    C_1(\ell,\theta)=
    \begin{pmatrix}
        \sin(\theta)\frac{2\ell+1}{2} & \cos(\theta)\frac{1}{2}&\\
        \cos(\theta)\frac{2\ell+1}{2} &\sin(\theta)\frac{2\ell-1}{2}&\cos(\theta)\frac{2}{2}&\\
        & \cos(\theta)\frac{2\ell}{2}&\sin(\theta)\frac{2\ell-3}{2}&\cos(\theta)\frac{3}{2}& \\
        &&\ddots&\ddots&\ddots&\\
    &&& \cos(\theta)\frac{2}{2}&\sin(\theta)(-\frac{2\ell-1}{2})&\cos(\theta)\frac{2\ell+1}{2}\\
        & &&& \cos(\theta)\frac{1}{2}&\sin(\theta)(-\frac{2\ell+1}{2})
    \end{pmatrix},\\\\
 C_2(\ell,\theta)=
    \begin{pmatrix}
        \sin(\theta){\ell} & \cos(\theta)\frac{1}{2}&\\
        \cos(\theta)\frac{2\ell}{2} &\sin(\theta)(\ell-1)&\cos(\theta)\frac{2}{2}&\\
        & \cos(\theta)\frac{2\ell-1}{2}&\sin(\theta)(\ell-2)&\cos(\theta)\frac{3}{2}& \\
        &&\ddots&\ddots&\ddots&\\
    &&& \cos(\theta)\frac{2}{2}&\sin(\theta)(-(\ell-1))&\cos(\theta)\frac{2\ell}{2}\\
        & &&& \cos(\theta)\frac{1}{2}&\sin(\theta)(-\ell)
    \end{pmatrix},
\end{align*}
where $\theta\in\R$. Then their eigenvalues are independent of the parameter $\theta\in\R$ and given by 
    \begin{equation*}
     \{\pm 1/2,\pm 3/2,\dots,\pm (2\ell+1)/2\}
    \end{equation*}
 and 
     \begin{equation*}
     \{0,\pm 1,\pm 2,\dots,\pm \ell\},
    \end{equation*}
     respectively.
\end{lemma}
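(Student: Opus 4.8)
The plan is to realize $C_1(\ell,\theta)$ and $C_2(\ell,\theta)$, up to a $\theta$‑independent similarity, as the matrix of $\cos\theta\,A_1+\sin\theta\,A_3$ in a fixed irreducible representation of $\mathfrak{su}(2)$, where $A_1,A_2,A_3$ are the standard Hermitian generators (angular‑momentum matrices); rotating this combination back onto the diagonal generator $A_3$ then exhibits the eigenvalues and makes their independence of $\theta$ manifest.

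First dispose of the degenerate case $\cos\theta=0$: there $C_1(\ell,\theta)$ and $C_2(\ell,\theta)$ are diagonal, with entry sets $\{\pm\tfrac12,\dots,\pm\tfrac{2\ell+1}{2}\}$ and $\{0,\pm1,\dots,\pm\ell\}$ (the sign $\sin\theta=\pm1$ only reorders and negates the diagonal), which already gives the claim. So assume $\cos\theta\neq0$ and write $C$ for either $C_1(\ell,\theta)$ or $C_2(\ell,\theta)$; then $C=\cos\theta\,U+\sin\theta\,V$, with $U$ the off‑diagonal part and $V$ the (diagonal) remainder. On the $k$‑th pair of off‑diagonal positions the two entries of $C$ multiply to $\tfrac{\cos^2\theta}{4}\,k(2\ell+2-k)$ for $C_1$ and to $\tfrac{\cos^2\theta}{4}\,k(2\ell+1-k)$ for $C_2$, which is strictly positive throughout the relevant ranges $1\le k\le 2\ell+1$, resp. $1\le k\le 2\ell$. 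Hence the usual symmetrization of a tridiagonal matrix (conjugation by a suitable positive diagonal matrix) shows that $C$ is similar to the real symmetric tridiagonal matrix $S$ with the same main diagonal and $k$‑th off‑diagonal entry $\tfrac{\cos\theta}{2}\sqrt{k(2\ell+2-k)}$, resp. $\tfrac{\cos\theta}{2}\sqrt{k(2\ell+1-k)}$; so it is enough to compute the spectrum of $S$.

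Next, let $A_1,A_2,A_3$ be the standard Hermitian generators of the spin‑$j$ irreducible representation of $\mathfrak{su}(2)$, with $j=\ell+\tfrac12$ in the case of $C_1$ and $j=\ell$ in the case of $C_2$, written in the weight basis, so that $A_3=\operatorname{diag}(j,j-1,\dots,-j)$, $A_1=\tfrac12(A_++A_-)$ and $A_2=\tfrac1{2i}(A_+-A_-)$ with the usual raising and lowering operators $A_\pm$; then (from the $\mathfrak{su}(2)$ commutation relations $[A_2,A_3]=iA_1$, $[A_2,A_1]=-iA_3$) the matrix $B:=-iA_2$ is real and skew‑symmetric with $[B,A_3]=A_1$, $[B,A_1]=-A_3$. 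Comparing the diagonal entries, $\sin\theta\cdot m$ with $m=j,j-1,\dots,-j$, and the off‑diagonal entries, $\tfrac{\cos\theta}{2}\sqrt{(j-m)(j+m+1)}$, identifies $S=\cos\theta\,A_1+\sin\theta\,A_3$; reconciling normalizations with \cite{RT2010_book} is where the bookkeeping lives. Now put $P(t)=e^{tB}A_3e^{-tB}$ and $Q(t)=e^{tB}A_1e^{-tB}$; differentiating and using the commutators gives $P'=Q$ and $Q'=-P$, whence $P(t)=\cos t\,A_3+\sin t\,A_1$, and therefore, taking $t=\tfrac\pi2-\theta$,
\[
\cos\theta\,A_1+\sin\theta\,A_3=e^{(\pi/2-\theta)B}\,A_3\,e^{-(\pi/2-\theta)B}.
\]
Thus $S$, and hence $C$, is similar to $A_3$ and has spectrum $\{j,j-1,\dots,-j\}$, for every $\theta$; substituting $j=\ell+\tfrac12$ and $j=\ell$ gives exactly the two asserted eigenvalue sets.

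The only real difficulty is the convention‑matching in the third step — pinning down which spin‑$j$ representation each family corresponds to and checking that the symmetrized matrices are literally $\cos\theta\,A_1+\sin\theta\,A_3$ — together with keeping the signs straight in the rotation $e^{tB}$. Conceptually nothing is surprising: $C_1(\ell,\cdot)$ and $C_2(\ell,\cdot)$ are the symbols of a left‑invariant vector field of the form $\cos\theta\,X+\sin\theta\,D_3$ on $\mathbb{S}^3\cong SU(2)$, and since $\operatorname{Ad}\colon SU(2)\to SO(\mathfrak{su}(2))$ is surjective, the spectrum of such a symbol depends only on the norm of the vector field, which here equals $1$ independently of $\theta$.
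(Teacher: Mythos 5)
Your proof is correct and follows essentially the same route as the paper's: both exhibit the one-parameter family as the adjoint orbit of the diagonal generator of an irreducible $\mathfrak{su}(2)$-representation, using the same exponential-of-a-commutator ODE argument to prove $\theta$-independence. The only real difference is cosmetic: the paper works directly with the non-symmetric matrices $X=iC_1(\ell,0)$, $Y=iC_1(\ell,\pi/2)$, computes $Z=[X,Y]$ explicitly and verifies $[Y,Z]=X$, $[X,Z]=-Y$ by hand, so it never needs the symmetrization step or the case distinction at $\cos\theta=0$; you instead symmetrize first (which requires $\cos\theta\neq 0$) and match to the standard spin-$j$ weight basis. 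Your route makes the $\mathfrak{su}(2)$ structure more transparent at the cost of the extra bookkeeping of conventions, while the paper's is more self-contained.
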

\begin{proof} 
    Fix $\ell\in\N$, and let $X=iC_1(\ell,0)$ and $Y=i C_1(\ell,\pi/2)$. Note that we can write
    \begin{equation*}
        iC_1(\ell,\theta)=\cos(\theta)X+\sin(\theta)Y,
    \end{equation*}
    for every $\theta\in\R$.
    Now consider the commutator
    \begin{equation*}
        Z\defeq[X,Y]=XY-YX.
    \end{equation*}
    A simple computation shows that
    \begin{equation*}
        Z=\begin{pmatrix}
        0 & \frac{1}{2}&\\
        -\frac{2\ell+1}{2} &0&\frac{2}{2}&\\
        & -\frac{2\ell}{2}&&\frac{3}{2}& \\
        &&\ddots&\ddots&\ddots&\\
    &&& -\frac{2}{2}&0&\frac{2\ell+1}{2}\\
        & &&& -\frac{1}{2}&0
    \end{pmatrix}.
    \end{equation*}
Using this fact, it is easy to compute the commutators:
\begin{equation*}
    [Y,Z]=X\quad\text{and}\quad [X,Z]=-Y.
\end{equation*}
    Next consider the matrices
    \begin{equation*}
        M(\theta)=e^{\theta Z}Xe^{-\theta Z},
    \end{equation*}
    which are all conjugate to $X$ by definition. We claim that $iC_1(\ell,\theta)=M(\theta)$. Indeed, notice that the function $M(\cdot)$ satisfies the first-order differential equation $\frac{d}{d\theta}M(\theta)=[Z,M(\theta)]$ with initial condition $M(0)=X$. On the other hand
    \begin{align*}
        [Z,iC_1(\ell,\theta)]&=[Z,\cos(\theta)X+\sin(\theta)Y]=-\cos(\theta)[X,Z]-\sin(\theta)[Y,Z]\\
        &=\cos(\theta)Y-\sin(\theta)X\\
        &=\frac{d}{d\theta}\left(\cos(\theta)X+\sin(\theta)Y\right)\\
        &=\frac{d}{d\theta}iC_1(\ell,\theta);
    \end{align*}
    hence, $iC_1(\ell,\theta)$ also satisfies the differential equation. Since $iC_1(\ell,0)=X$, the same initial condition holds. Thus, by uniqueness of solution, we conclude the $iC_1(\ell,\theta)=M(\theta)$, for all $\theta$. Since $iC_1(\ell,\theta)$ is conjugate $X$ for all $\theta$, by transitivity we conclude that $i C_1(\ell,\theta)$ is conjugate to $Y=iC_1(\ell,\pi/2)$, which is diagonal with eigenvalues  $i\cdot\{\pm 1/2,\pm 3/2,\dots,\pm (2\ell+1)/2\}$. The proof for $C_2(\ell,\theta)$ is analogous.
\end{proof}

\begin{theorem}[Directional Poincaré inequality on $\S$]\label{theo_directional_sphere}
    For any $\alpha\in\R^3$ we have that
    \begin{equation}\label{ineq_Poincaré_S3}
        \|\langle \nabla_{\mathbb{S}^3}f,\alpha\rangle \|_{L^2(\mathbb{S}^3)}\geq \frac{\|\alpha\|}{2}\|f\|_{L^2(\mathbb{S}^3)},
    \end{equation}
    for every $f\in (\ker Y|_{H^1(G)})^\perp$, where $Y=\alpha_1D_1+\alpha_2D_2+\alpha_3D_3$.
\end{theorem}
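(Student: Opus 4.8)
The plan is to verify the hypothesis of Theorem~\ref{theo_directional_poincare_general} for $\delta=1$ in its sharp form, namely $\lambda_{\min}^{>0}[\sigma_Y(\xi)]\ge\|\alpha\|/2$ for every non-trivial $[\xi]\in\widehat{\S}$, and then to run the elementary $\delta=1$ half of that theorem's proof by hand so as to retain the explicit constant $\|\alpha\|/2$. Recall that $\widehat{\S}$ is parametrised by $\ell\in\tfrac12\N_0$ with $d_{\xi^\ell}=2\ell+1$, and that in the conventions of \cite{RT2010_book} one of the symbols $\sigma_{D_j}(\xi^\ell)$ — say, after relabelling, $\sigma_{D_3}(\xi^\ell)$ — is diagonal while the other two are off-diagonal, with $\sigma_{D_1}(\xi^\ell)\pm i\sigma_{D_2}(\xi^\ell)$ the upper/lower shift operators. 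Since $\sigma_Y(\xi^\ell)=\alpha_1\sigma_{D_1}(\xi^\ell)+\alpha_2\sigma_{D_2}(\xi^\ell)+\alpha_3\sigma_{D_3}(\xi^\ell)$, a conjugation by a suitable diagonal unitary matrix — which fixes $\sigma_{D_3}(\xi^\ell)$ and preserves the singular values of $\sigma_Y(\xi^\ell)$ — reduces $\alpha_1\sigma_{D_1}+\alpha_2\sigma_{D_2}$ to $\sqrt{\alpha_1^2+\alpha_2^2}\,\sigma_{D_1}(\xi^\ell)$. Writing $\sqrt{\alpha_1^2+\alpha_2^2}=\|\alpha\|\cos\theta$ and $\alpha_3=\|\alpha\|\sin\theta$, we may thus assume
\[
\sigma_Y(\xi^\ell)=\|\alpha\|\bigl(\cos\theta\,\sigma_{D_1}(\xi^\ell)+\sin\theta\,\sigma_{D_3}(\xi^\ell)\bigr),\qquad \theta=\theta(\alpha).
\]
(Alternatively one could reduce $\alpha$ all the way to $\|\alpha\|e_3$ using that $\operatorname{Ad}\colon SU(2)\to SO(3)$ is onto, but the two-parameter families of Lemma~\ref{lemma_tridiagonal_eig} suggest the partial reduction above.)

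Next, the matrix $\cos\theta\,\sigma_{D_1}(\xi^\ell)+\sin\theta\,\sigma_{D_3}(\xi^\ell)$ is, after one further diagonal similarity (which changes neither the spectrum nor what we need below), equal to $i\,C_1(\ell-\tfrac12,\theta)$ when $d_{\xi^\ell}=2\ell+1$ is even and to $i\,C_2(\ell,\theta)$ when it is odd. Lemma~\ref{lemma_tridiagonal_eig} then gives that the eigenvalues of $\sigma_Y(\xi^\ell)$ are $i\|\alpha\|$ times $\{\pm\tfrac12,\pm\tfrac32,\dots,\pm\ell\}$ if $2\ell+1$ is even, and $i\|\alpha\|$ times $\{0,\pm1,\dots,\pm\ell\}$ if $2\ell+1$ is odd, in both cases independently of $\theta$. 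Since $Y$ is real and left-invariant, hence skew-symmetric on $L^2(\S)$, each $\sigma_{D_j}(\xi^\ell)$ is skew-Hermitian, so $\sigma_Y(\xi^\ell)$ is skew-Hermitian and therefore normal; consequently its singular values equal the moduli of its eigenvalues. We conclude that $\sigma_Y(\xi^\ell)=0$ precisely for $\ell=0$ (the trivial representation), and that for every non-trivial $[\xi^\ell]$ the smallest non-zero singular value is $\|\alpha\|$ times the smallest non-zero element of the corresponding spectrum, hence $\lambda_{\min}^{>0}[\sigma_Y(\xi^\ell)]\ge\|\alpha\|/2$ (with equality for the two-dimensional representation).

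To finish, let $f\in(\ker Y|_{H^1(\S)})^\perp$, which we may assume non-zero. By Proposition~\ref{prop_EP_equiv_Fourier} every column of $\widehat{f}(\xi^\ell)$ lies in $(\ker\sigma_Y(\xi^\ell))^\perp$, so when $\sigma_Y(\xi^\ell)\ne0$ one has $\|\sigma_Y(\xi^\ell)\widehat{f}(\xi^\ell)\|_{\HS}\ge\lambda_{\min}^{>0}[\sigma_Y(\xi^\ell)]\,\|\widehat{f}(\xi^\ell)\|_{\HS}\ge\tfrac{\|\alpha\|}{2}\|\widehat{f}(\xi^\ell)\|_{\HS}$, while $\widehat{f}(\xi^\ell)=0$ when $\sigma_Y(\xi^\ell)=0$. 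Multiplying by $d_{\xi^\ell}$, summing over $\widehat{\S}$, and applying Plancherel's identity \eqref{eq_Plancherel} on both sides yields $\|Yf\|_{L^2(\S)}^2\ge\tfrac{\|\alpha\|^2}{4}\|f\|_{L^2(\S)}^2$; taking square roots gives the asserted inequality. (This is the $\delta=1$ instance of Theorem~\ref{theo_directional_poincare_general}, carried out directly so as to keep the optimal constant.)

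The only substantial point is the first two paragraphs: writing the symbols $\sigma_{D_j}(\xi^\ell)$ explicitly in the conventions of \cite{RT2010_book}, carrying out the unitary reduction, and matching the resulting one-parameter family with $C_1(\cdot,\theta)$ or $C_2(\cdot,\theta)$ with the correct index and normalisation. Once this bookkeeping is done, Lemma~\ref{lemma_tridiagonal_eig} supplies the full spectrum for free, and the passage from eigenvalues to singular values — as well as the final summation — is routine.
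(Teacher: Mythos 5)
Your proposal is correct and follows essentially the same route as the paper: verify the spectral condition $\lambda_{\min}^{>0}[\sigma_Y(\ell)]\ge\|\alpha\|/2$ via the explicit tridiagonal symbols, reduce by a diagonal similarity to the matrices of Lemma~\ref{lemma_tridiagonal_eig}, use skew-Hermiticity to pass from eigenvalues to singular values, and then apply the $\delta=1$ Plancherel argument. The only cosmetic difference is that you split the reduction into a diagonal unitary phase-removal step followed by a further diagonal similarity, whereas the paper performs a single (non-unitary) diagonal conjugation $D(\ell)$ that lands directly on $C_1(\cdot,\theta)$ or $C_2(\cdot,\theta)$; both produce the same matrices and the same eigenvalue lists.
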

\begin{proof}
Fix $\alpha\in\R^3$ and let $Y\defeq\langle\nabla_{\mathbb{S}^3},\alpha\rangle =\alpha_1D_1+\alpha_2D_2+\alpha_3D_3$, so that $\langle \nabla_{\mathbb{S}^3}f,\alpha\rangle=Yf$. 
The result will follow from Theorem \ref{theo_directional_poincare_general} once we prove that 
\begin{equation}\label{diof_condition_S^3}
    \lambda_{\min}^{>0}[\sigma_Y(\ell)]\geq \frac{\|\alpha\|}{2},
\end{equation}
for every $\ell\in\frac{1}{2}\N_0$ such that $\sigma_Y(\ell)\neq 0$.\\
For $\alpha=0$, the result is trivial. Assume that  $\alpha\neq 0$. The inequality \eqref{diof_condition_S^3} follows from \cite[Proposition 12.2.4]{RT2010_book}---keeping in mind the result there holds for vectors of same length in $\operatorname{Lie}(SU(2))\sim \operatorname{Lie}(\mathbb{S}^3)$---as similar matrices have the same eigenvalues, however we choose to give a more direct proof, as follows. First note that 
\begin{equation*}
\|Y f\|_{L^2(\mathbb{S}^3)}=   \|\alpha\|\cdot\|{\textstyle\frac{1}{\|\alpha\|}}Y f\|_{L^2(\mathbb{S}^3)},    
\end{equation*}
so we only need to prove that \eqref{diof_condition_S^3} holds for $\|\alpha\|=1$.  

 Next set $\alpha_*=(i\alpha_1-\alpha_2)/2$. Using the formulas for the symbols of $D_1,D_2$ and $D_3$ in \cite{RT2010_book}, we obtain that  $\sigma_Y(\ell)$ is given by the tridiagonal matrix
{\small\begin{equation*}
    \begin{pmatrix}
        i\alpha_3\ell & -\overline{\alpha_*}\sqrt{2\ell}&&&\\
        \alpha_*\sqrt{2\ell} &i\alpha_3(\ell-1)&-\overline{\alpha_*}\sqrt{(2\ell-1)2}&&\\
        &\alpha_*\sqrt{(2\ell-1)2}&i\alpha_3(\ell-2)&-\overline{\alpha_*}\sqrt{(2\ell-2)3} \\
        &&\ddots&\ddots&\ddots\\
        &&&\alpha_*\sqrt{2(2\ell-1)}&i\alpha_3(-(\ell-1))&-\overline{\alpha_*}\sqrt{2\ell}\\
        && &&\alpha_*\sqrt{2\ell}&i\alpha_3(-\ell)
    \end{pmatrix},
\end{equation*}}
for every $\ell\in\frac{1}{2}\N_0\sim \widehat{\mathbb{S}^3}$. Since $Y$ is anti-symmetric on $L^2(\S)$, we have that  $\sigma_Y(\ell)$ is anti-hermitian for every $\ell\in\frac{1}{2}\N_0$ (see \cite[Remark 10.4.20]{RT2010_book}); hence its singular values coincide with the absolute value of its eigenvalues. Therefore, it is enough to compute the eigenvalues of $\sigma_Y(\ell)$.

Note that if $\alpha_1=\alpha_2=0$, then $\sigma_Y(\ell)$ is diagonal and, consequently, its eigenvalues are given trivially by $i\alpha_3\{\pm 1/2,\pm 3/2,\dots,\pm\ell\}$; hence
\begin{equation*}
    \lambda_{\min}^{>0}[\sigma_Y(\ell)]=\frac{|\alpha_3|}{2}=\frac{\|\alpha\|}{2}=\frac{1}{2}.
\end{equation*}
Also, if $\ell=0$, $\sigma_Y(\ell)=\begin{pmatrix}
    0
\end{pmatrix}$,
and there is nothing to prove.
Next assume that $(\alpha_1,\alpha_2)\neq 0$ and $\ell\neq 0$.
First we will study the half-integer frequencies $\ell\in\frac{1}{2}\N_0\backslash\N_0$.

For $\ell\in\frac{1}{2}\N_0\backslash\N_0$, by considering the change of variables $\ell'=\ell-\frac{1}{2}$, we have that $ \tilde \sigma_Y(\ell')\vcentcolon=\frac{1}{i}\sigma_Y(\frac{2\ell'+1}{2})=\frac{1}{i}\sigma_Y(\ell)$ is equal to
\begin{equation*}
   \begin{pmatrix}
        \alpha_3(\frac{2\ell'+1}{2}) & -\overline{i\alpha_*}\sqrt{2\ell'+1}&&&\\
        -i\alpha_*\sqrt{2\ell'+1} &\alpha_3(\frac{2\ell'-1}{2})&-\overline{i\alpha_*}\sqrt{(2\ell')2}&&\\
        &-i\alpha_*\sqrt{(2\ell')2}&\alpha_3(\frac{2\ell'-3}{2})&-\overline{i\alpha_*}\sqrt{(2\ell-1)3}& \\
        &\ddots&\ddots&\ddots&\\
        &&-i\alpha_*\sqrt{(2\ell')2}&\alpha_3(-\frac{2\ell'-1}{2})&-\overline{i\alpha_*}\sqrt{2\ell'+1}\\
        & &&-i\alpha_*\sqrt{2\ell'+1}&\alpha_3(-\frac{2\ell'+1}{2}),
    \end{pmatrix}
\end{equation*}
for every $\ell'\in\N_0$, where $\tilde \sigma_Y(\ell')\in \C^{2\ell'+2\times 2\ell'+2}$. Henceforth we will consider $\ell\in\N_0$.

Let $D(\ell)= \operatorname{diag}(d_1,\dots,d_{2\ell+2})$, where
\begin{equation*}
    d_k=\left(\frac{2}{\sqrt{\alpha_1^2+\alpha_2^2}}\right)^{k-1}\frac{1}{(k-1)!}\prod_{j=1}^{k-1}\tilde \sigma_{Y}(\ell)_{j(j+1)},
\end{equation*}
with $d_1=1$. Setting $C_0(\ell,\alpha) = D(\ell)\tilde \sigma_{Y}(\ell)D(\ell)^{-1}$, a simple computation shows that 
\begin{equation*}
    C_0(\ell,\alpha)=\begin{pmatrix}
        \alpha_3(\frac{2\ell+1}{2}) & \frac{\|\alpha'\|}{2}1&\\
        \frac{\|\alpha'\|}{2}{(2\ell+1)} &\alpha_3(\frac{2\ell-1}{2})&\frac{\|\alpha'\|}{2}2&\\
        & \frac{\|\alpha'\|}{2}2\ell&\alpha_3(\frac{2\ell-3}{2})&\frac{\|\alpha'\|}{2}3& \\
        &&\ddots&\ddots&\ddots&\\
    &&& \frac{\|\alpha'\|}{2}2&\alpha_3(-\frac{2\ell-1}{2})&\frac{\|\alpha'\|}{2}(2\ell+1)\\
        & &&& \frac{\|\alpha'\|}{2}1&\alpha_3(-\frac{2\ell+1}{2})
    \end{pmatrix}.
\end{equation*}
Indeed, note that 
\begin{align*}
    C_0(\ell,\alpha)_{k(k+1)}&=D(\ell)_{kk}\tilde \sigma_{Y}(\ell)_{k(k+1)}D(\ell)^{-1}_{(k+1)(k+1)}=\frac{d_k}{d_{k+1}}\tilde \sigma_{Y}(\ell)_{k(k+1)}=\frac{\sqrt{\alpha_1^2+\alpha_2^2}}{2}k,
\end{align*}
 and
\begin{align*}
    C_0(\ell,\alpha)_{(k+1)k}&=D(\ell)_{(k+1)(k+1)}\tilde \sigma_{Y}(\ell)_{(k+1)k}D(\ell)^{-1}_{kk}\\
    &=\frac{d_{k+1}}{d_k}\tilde \sigma_{Y}(\ell)_{(k+1)k}\\
    &=\left(\frac{\sqrt{\alpha_1^2+\alpha_2^2}}{2}\right)^{-1}\frac{1}{k}\tilde \sigma_{Y}(\ell)_{(k+1)k}\tilde \sigma_{Y}(\ell)_{k(k+1)}\\
    &=\frac{2}{\sqrt{\alpha_1^2+\alpha_2^2}}\frac{1}{k}\frac{{\alpha_1^2+\alpha_2^2}}{4}(2\ell+2-k)k\\
    &=\frac{\sqrt{\alpha_1^2+\alpha_2^2}}{2}(2\ell+2-k),
\end{align*}
for $1\leq k\leq 2\ell+1$. Also,
\begin{align*}
     C_0(\ell,\alpha)_{kk}&=D(\ell)_{kk}\tilde \sigma_{Y}(\ell)_{kk}D(\ell)^{-1}_{kk}\\
     &=\tilde \sigma_{Y}(\ell)_{kk}\\
     &=\alpha_3 \left(\frac{2\ell+3-2k}{2}\right),
\end{align*}
for $1\leq k\leq 2\ell+2$.

Since $\|\alpha\|=1$, we can write $\alpha=(\cos\phi\cos \theta,\sin\phi\cos\theta,\sin\phi)$, where $0\leq \phi\leq 2\pi$ and $-\pi/2\leq \theta<\pi/2$. Then $C_0(\ell,\alpha)$ can be rewritten as\begin{align*}
    C_1(\ell,\theta)=\begin{pmatrix}
        \sin(\theta)\frac{2\ell+1}{2} & \cos(\theta)\frac{1}{2}&\\
        \cos(\theta)\frac{2\ell+1}{2} &\sin(\theta)\frac{2\ell-1}{2}&\cos(\theta)\frac{2}{2}&\\
        & \cos(\theta)\frac{2\ell}{2}&\sin(\theta)\frac{2\ell-3}{2}&\cos(\theta)\frac{3}{2}& \\
        &&\ddots&\ddots&\ddots&\\
    &&& \cos(\theta)\frac{2}{2}&\sin(\theta)(-\frac{2\ell-1}{2})&\cos(\theta)\frac{2\ell+1}{2}\\
        & &&& \cos(\theta)\frac{1}{2}&\sin(\theta)(-\frac{2\ell+1}{2})
    \end{pmatrix}.
\end{align*}
By Lemma \ref{lemma_tridiagonal_eig} the eigenvalues of $C_1(\ell,\theta)$ are given by 
\begin{equation*}
 \{\pm 1/2,\pm 3/2,\dots,\pm (2\ell+1)/2\}.  
\end{equation*}
So these also correspond to the eigenvalues of $C_0(\ell,\alpha)$  and by similarity $\tilde\sigma_Y(\ell)$ also has the same eigenvalues. We conclude that for $\ell\in\frac{1}{2}\N_0\backslash\N_0$, the eigenvalues of $\sigma_Y(\ell)$ are given by 
\begin{equation*}
    i \{\pm 1/2,\pm 3/2,\dots,\pm \ell\},
\end{equation*}
so its smallest non-zero singular value corresponds to $\frac{1}{2}$. 

For the case $\ell\in\N$, we follow an analogous argument, but this time we skip the change of variables $\ell'\mapsto \ell$ and apply the conjugation by $D(\ell)$ directly. We then write $\alpha$ in spherical coordinates again, and obtain that $\tilde\sigma_Y(\ell)$ is similar to 
\begin{align*}
    C_2(\ell,\theta)=\begin{pmatrix}
        \sin(\theta){\ell} & \cos(\theta)\frac{1}{2}&\\
        \cos(\theta)\frac{2\ell}{2} &\sin(\theta)(\ell-1)&\cos(\theta)\frac{2}{2}&\\
        & \cos(\theta)\frac{2\ell-1}{2}&\sin(\theta)(\ell-2)&\cos(\theta)\frac{3}{2}& \\
        &&\ddots&\ddots&\ddots&\\
    &&& \cos(\theta)\frac{2}{2}&\sin(\theta)(-(\ell-1))&\cos(\theta)\frac{2\ell}{2}\\
        & &&& \cos(\theta)\frac{1}{2}&\sin(\theta)(-\ell)
    \end{pmatrix}.
\end{align*}
Then, again by Lemma \ref{lemma_tridiagonal_eig}, we obtain that the eigenvalues of $C_2(\ell,\theta)$ are given by 
\begin{equation*}
    \{0,\pm 1,\dots,\pm \ell\},
\end{equation*}
so we conclude that the eigenvalues of $\sigma_Y(\ell)$ correspond to 
\begin{equation*}
   i \{0,\pm 1,\dots,\pm \ell\};
\end{equation*}
 hence, its smallest non-zero singular value is $1$. Therefore, we conclude that inequality \eqref{diof_condition_S^3} holds for every $\ell\in\frac{1}{2}\N_0$ such that $\sigma_Y(\ell)\neq0$. The proof is complete.
\end{proof}

\section{Global Solvability of Fourier Multipliers on compact Lie groups}\label{sect_solvability}

Global solvability of left-invariant continuous linear operators on compact Lie groups  has been characterized in terms of $L^2$ lower bounds on the action of the symbol of the operator on certain subspaces of the eigenspaces of the Laplacian (see \cite{araujo_compact,strongly_inv_solvable}), and, for certain classes of operators, in terms of algebraic conditions involving the eigenvalues of their respective global symbol (see \cite{diagonal_systems}). 

In this section we characterize the global solvability of left-invariant continuous linear operators on compact Lie groups in terms of lower bounds on the decay of the smallest non-zero singular value of the symbol. Consequently,  together with the results from Section \ref{sect_nec_suff}, we obtain that the global solvability of a left-invariant continuous linear operator is equivalent to the existence of its corresponding directional Poincaré inequality, for some exponent $\delta\geq 1$.

\begin{definition}
    Let $M$ be a compact manifold and $P:C^\infty(M)\to C^\infty(M)$ be  a continuous linear operator. We say that $P$ is globally solvable if for every $f\in(\ker {}^tP)^{0}$ there exists $u\in C^\infty(M)$ such that $Pu=f$, where 
    \begin{equation*}
        (\ker {}^tP)^{0}=\{f\in C^\infty(M):\langle u,f\rangle=0,\,\forall u \in \ker {}^tP\subset \mathcal{D}'(M)\}.
    \end{equation*}
\end{definition}
We note that since $\overline{\operatorname{ran} P}=(\ker {}^tP)^{0}$, an operator $P$ is globally solvable if and only if its range is closed in $C^\infty(M)$.

\begin{prop}\label{prop_kernel_annihi_range}
    Let $G$ be a compact Lie group and $P$ a left-invariant continuous linear operator on $C^\infty(G)$. Then $f\in C^\infty(G)$ is in $(\ker {}^tP)^{0}$ if and only if, for every $[\xi]\in\widehat{G}$, 
    every column of $\widehat{f}(\xi)$ is in  $\operatorname{ran}\sigma_P(\xi)$.
\end{prop}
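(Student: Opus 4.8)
The plan is to argue purely at the level of Fourier coefficients, mirroring the proof of Proposition \ref{prop_EP_equiv_Fourier} but using ranges instead of orthogonal complements of kernels. The key observation is the duality between $P$ and its transpose ${}^tP$ at the level of symbols: if $\sigma_P(\xi)$ is the global symbol of $P$, then the symbol of ${}^tP$ is (up to the conventions fixed in Section \ref{sect_pre}) essentially the transpose of $\sigma_P(\xi)$, so that $\ker \sigma_{{}^tP}(\xi)$ and $\operatorname{ran}\sigma_P(\xi)$ are related by $(\operatorname{ran}\sigma_P(\xi))^\perp = \ker \sigma_P(\xi)^*$. I would first record this symbol relation precisely, being careful about complex conjugation: since $\widehat{u}(\xi)=\langle u,\xi^*\rangle$ and ${}^tP$ is the transpose with respect to the bilinear (not sesquilinear) duality, one gets $\widehat{{}^tPu}(\xi) = \sigma_P(\xi)^t\,\widehat{u}(\xi)$ after accounting for how conjugation interacts with the coefficient functions; alternatively, and more robustly, I would phrase the whole argument in terms of the formal $L^2$-adjoint $P^*$ and translate at the end, since $\ker {}^tP$ and $\ker P^*$ differ only by complex conjugation of Fourier coefficients, which does not affect the (real, or more precisely conjugation-closed) space $(\ker {}^tP)^0\cap C^\infty(G)$.

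For the forward direction, suppose $f\in(\ker {}^tP)^0$ and fix $[\xi]\in\widehat{G}$. I want to show each column of $\widehat{f}(\xi)$ lies in $\operatorname{ran}\sigma_P(\xi)$, equivalently is orthogonal to $\ker\sigma_P(\xi)^* = (\operatorname{ran}\sigma_P(\xi))^\perp$. Given $w_\xi\in(\operatorname{ran}\sigma_P(\xi))^\perp$, I construct a test distribution $u\in\mathcal{D}'(G)$ (in fact smooth, since it is supported on a single representation) whose only nonzero Fourier coefficient is at $[\xi]$, with $w_\xi$ placed appropriately in one column; the computation analogous to the one in Proposition \ref{prop_EP_equiv_Fourier} shows that such $u$ lies in $\ker {}^tP$ because $\sigma_{{}^tP}(\xi)$ annihilates $w_\xi$ (this is exactly where $(\operatorname{ran}\sigma_P(\xi))^\perp = \ker\sigma_P(\xi)^*$ enters, after translating between ${}^tP$ and $P^*$). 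Then $\langle u,f\rangle = 0$ forces, via the Parseval identity \eqref{eq_Plancherel}, the inner product of the relevant column of $\widehat{f}(\xi)$ with $w_\xi$ to vanish; letting $w_\xi$ range over a basis of $(\operatorname{ran}\sigma_P(\xi))^\perp$ and the column index range over $1,\dots,d_\xi$ gives the claim.

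For the converse, suppose every column of $\widehat{f}(\xi)$ is in $\operatorname{ran}\sigma_P(\xi)$ for every $[\xi]$, and let $u\in\ker {}^tP$ be arbitrary; I must show $\langle u,f\rangle = 0$. Since $u\in\ker{}^tP$, Proposition \ref{prop_charact_smooth_and_dis} ensures $\widehat{u}(\xi)$ has polynomial growth, so all sums converge, and the condition $\,{}^tPu=0$ translates (again via the symbol relation) to each column of $\widehat{u}(\xi)$ lying in $\ker\sigma_P(\xi)^* = (\operatorname{ran}\sigma_P(\xi))^\perp$. Then expanding $\langle u,f\rangle$ by Parseval and grouping column by column, each term pairs a vector in $\operatorname{ran}\sigma_P(\xi)$ against one in $(\operatorname{ran}\sigma_P(\xi))^\perp$, hence vanishes, so $\langle u,f\rangle=0$; since $u\in\ker{}^tP$ was arbitrary, $f\in(\ker{}^tP)^0$.

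\textbf{Main obstacle.} The only genuinely delicate point is bookkeeping: getting the relation between $\sigma_{{}^tP}(\xi)$ and $\sigma_P(\xi)$ exactly right under the paper's Fourier conventions (transpose versus conjugate-transpose, and which side matrices multiply on), and consequently correctly identifying $(\operatorname{ran}\sigma_P(\xi))^\perp$ with the kernel of the appropriate adjoint-type symbol. I expect to handle this cleanly by first proving the statement with ${}^tP$ replaced by the formal $L^2$-adjoint $P^*$ — where $\sigma_{P^*}(\xi)=\sigma_P(\xi)^*$ and $(\operatorname{ran}\sigma_P(\xi))^\perp=\ker\sigma_P(\xi)^*$ is immediate from linear algebra — and then observing that, since the Fourier coefficients of $\overline{u}$ are obtained from those of $u$ by an entrywise operation that preserves membership in $\operatorname{ran}\sigma_P(\xi)$ only after the corresponding adjustment, the space $(\ker{}^tP)^0$ coincides with $(\ker P^*)^0$ on the level of smooth functions (both being $\overline{\operatorname{ran}P}$). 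Everything else is a routine repetition of the Fourier-coefficient manipulations already carried out in Proposition \ref{prop_EP_equiv_Fourier}.
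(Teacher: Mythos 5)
Your plan is essentially the paper's: derive a symbol identity for the transpose (or adjoint), then repeat the Fourier–coefficient argument from Proposition \ref{prop_EP_equiv_Fourier} with test distributions supported on a single representation. Two points are worth flagging, one of which matters.

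First, the slip in your draft symbol identity ``$\widehat{{}^tPu}(\xi) = \sigma_P(\xi)^t\,\widehat{u}(\xi)$'' is a real one: the paper's careful duality computation gives $\sigma_{{}^tP}(\xi) = \sigma_P(\overline{\xi})^{t}$, i.e.\ the conjugate representation $\overline{\xi}$ appears, not $\xi$. This is what forces the test distributions $u_j$ in the paper to be supported at $[\overline{\xi}]$ rather than $[\xi]$, and if you work directly with ${}^tP$ without this conjugate you will not get the pairing to collapse to a single column. That said, your proposed detour through the formal $L^2$-adjoint $P^*$ (where the clean identity $\sigma_{P^*}(\xi) = \sigma_P(\xi)^*$ holds without a $\overline{\xi}$, and $(\operatorname{ran}\sigma_P(\xi))^\perp = \ker\sigma_P(\xi)^*$ is immediate) sidesteps this bookkeeping entirely and is arguably cleaner than the paper's direct approach.

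Second, a smaller notational slip: the space you want is $(\ker P^*)^\perp$ (Hermitian $L^2$-orthogonal complement), not $(\ker P^*)^0$; indeed $(\ker {}^tP)^0 = (\ker P^*)^\perp = \overline{\operatorname{ran} P}$, whereas $(\ker P^*)^0 = (\ker {}^tP)^\perp = \overline{\operatorname{ran}\overline{P}}$, which is generally different. With that fixed, the translation step you describe is correct, and the remainder of your outline — constructing rank-one test distributions, applying the bilinear Parseval identity, and invoking $(\operatorname{ran}\sigma_P(\xi))^\perp = \ker\sigma_P(\xi)^*$ column by column — matches the paper's proof.
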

\begin{proof}
    First note that $\sigma_{{}^tP}(\xi)$, the symbol of ${}^tP$ at $[\xi]\in\widehat{G}$, is given by $\sigma_P(\overline{\xi})^{t}$. Indeed, for $u\in\mathcal{D}'(G)$ and $f\in C^\infty(G)$, we have that
\begin{align}\label{eq_Fourier_duality}
    \langle u ,f \rangle&=\left\langle u,\sum_{[\xi]\in\widehat{G}}d_\xi \Tr\left(\widehat{f}(\xi)\xi(x)\right)\right\rangle=\sum_{[\xi]\in\widehat{G}}d_\xi \Tr\left(\widehat{f}(\xi)\langle u,\xi(x)\rangle\right)\notag\\
    &=\sum_{[\xi]\in\widehat{G}}d_\xi \Tr\left(\widehat{f}(\xi)^t\langle u,{\xi}(x)^t\rangle\right)=\sum_{[\xi]\in\widehat{G}}d_\xi \Tr\left(\widehat{f}(\xi)^t\widehat{u}(\overline{\xi})\right)\notag\\
    &=\sum_{[\xi]\in\widehat{G}}d_\xi \Tr\left(\widehat{f}(\xi)\widehat{u}(\overline{\xi})^t\right),
\end{align}
where we used the fact that the Fourier series of $f$ converges in $C^\infty(G)$.
Therefore, 
\begin{align*}
    \langle u,Pf\rangle=\sum_{[\xi]\in\widehat{G}}d_\xi \Tr\left(\sigma_{P}(\xi)\widehat{f}(\xi)\widehat{u}(\overline{\xi})^t\right),
\end{align*}
but also
\begin{align*}
     \langle {}^tPu,f\rangle&=\sum_{[\xi]\in\widehat{G}}d_\xi \Tr\left(\widehat{f}(\xi)\widehat{u}(\overline{\xi})^t\sigma_{{}^tP}(\overline{\xi})^t\right)\\
     &=\sum_{[\xi]\in\widehat{G}}d_\xi \Tr\left(\sigma_{{}^tP}(\overline{\xi})^t\widehat{f}(\xi)\widehat{u}(\overline{\xi})^t\right).
\end{align*}  
Since this holds for any $u\in\mathcal{D}'(G)$ and $f\in C^\infty(G)$, we must have that
\begin{equation*}
    \sigma_{{}^tP}(\xi)=\sigma_{P}(\overline{\xi})^{t},
\end{equation*}
for every $[\xi]\in\widehat{G}$, as claimed.

Therefore, $u\in \mathcal{D}'(G)$ is in $\ker {}^tP$ if and only if the colmuns of $\widehat{u}(\xi)$ are in $\ker\sigma_P(\overline{\xi})^t$, for every $[\xi]\in\widehat{G}$.

Next suppose that $f\in C^\infty(G)$ is in $(\ker {}^tP)^{0}$. Fix $[\xi]\in\widehat{G}$ and let $v_\xi \in \ker \sigma_P({\xi})^t$. For $1\leq j\leq d_\xi$, let $u_j\in \mathcal{D}'(G)$ be given by $\widehat{u_j}(\eta)=0$ for $[\overline{\xi}]\neq [\eta]\in\widehat{G}$ and $\widehat{u_j}(\overline{\xi})$ be the $d_\xi\times d_\xi$ matrix  with $v_\xi$ as its $j$-th column and every other column be the zero vector. Then,
\begin{align*}
    {}^tPu_j&=d_{\overline\xi} \Tr(\sigma_{{}^tP}(\overline{\xi})\widehat{u_j}(\overline\xi)\overline{\xi}(x))\\
    &=d_{\overline{\xi}} \Tr(\sigma_{P}({\xi})^t\widehat{u_j}(\overline{\xi})\overline{\xi}(x))\\
    &=d_{\overline{\xi}}\Tr\left(\begin{pmatrix}
        0&\cdots&0& \sigma_{P}({\xi})^tv_\xi&0&\cdots&0
    \end{pmatrix}\overline{\xi}(x)\right)=0;
\end{align*}
that is, $u_j\in \ker {}^tP$.
Also, by identity \eqref{eq_Fourier_duality} we have that
    \begin{equation*}
        0= \frac{1}{d_\xi}\langle u_j,f\rangle = \Tr (\widehat{f}({\xi})\widehat{u_j}(\overline{\xi})^t)=\Tr (\widehat{u_j}(\overline{\xi})^t\widehat{f}({\xi}))=\langle \widehat{f}({\xi})_{\cdot j},v_\xi\rangle_{\C^{d_\xi}},
    \end{equation*}
    where $\widehat{f}({\xi})_{\cdot j}$ denotes the $j$-th column of $\widehat{f}({\xi})$. Hence every column of $\widehat{f}({\xi})$ is in $(\ker \sigma_{P}({\xi})^t)^\perp=\operatorname{ran}\sigma_P({\xi})$, as claimed.
    
    Conversely, if  every column of $\widehat{f}(\xi)$ is in $\operatorname{ran}\sigma_P(\xi)=(\ker \sigma_{P}(\xi)^t)^\perp$, then for any $u\in \ker {}^tP$, we have that 
    \begin{align*}
        \langle u,f\rangle&=\sum_{[\xi]\in\widehat{G}} d_\xi \Tr(\widehat{f}(\xi)\widehat{u}(\overline{\xi})^t)
        =\sum_{[\xi]\in\widehat{G}} d_\xi\sum_{j=1}^{d_\xi}\langle \widehat{f}({\xi})_{\cdot j},\widehat{u}(\overline{\xi})_{\cdot j}\rangle_{\C^{d_\xi}}=0,
    \end{align*}
    since every term in the sum above is equal to zero as $\widehat{u}(\overline{\xi})_{\cdot j}\in \ker \sigma_{{}^tP}(\overline{\xi})=\ker \sigma_P(\xi)^t$, for every $[\xi]\in\widehat{G}$ and $1\leq j\leq d_\xi$. Therefore $f\in (\ker {}^tP)^{0}$.
\end{proof}

\begin{prop}\label{prop_globally_solvable}
    Let $G$ be a compact Lie group and $P:C^\infty(G)\to C^\infty(G)$ be a left-invariant continuous linear operator on $G$. Then $P$ is globally solvable (has closed range)  if and only if there exist $C,k>0$ such that
    \begin{equation}\label{ineq_diof_gs}
        \lambda_{\min}^{>0}[\sigma_P(\xi)]\geq C\langle \xi\rangle^{-k},
    \end{equation}
    for every $[\xi]\in\widehat{G}$ such that $\sigma_P(\xi)\neq 0$.
\end{prop}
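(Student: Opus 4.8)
The plan is to establish both directions via the Fourier-multiplier characterization of $\ker {}^tP$ and $(\ker {}^tP)^0$ obtained in Proposition \ref{prop_kernel_annihi_range}, together with Proposition \ref{prop_charact_smooth_and_dis}, reducing everything to quantitative linear algebra on the symbols $\sigma_P(\xi)$.

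For the sufficiency direction, suppose \eqref{ineq_diof_gs} holds for some $C,k>0$. Given $f\in(\ker {}^tP)^0\subset C^\infty(G)$, Proposition \ref{prop_kernel_annihi_range} says each column of $\widehat f(\xi)$ lies in $\operatorname{ran}\sigma_P(\xi)=(\ker\sigma_P(\xi))^\perp$ (this last equality is an issue—see below; in general $\operatorname{ran}\sigma_P(\xi)$ need not equal $(\ker\sigma_P(\xi))^\perp$). I define a candidate solution $u$ by setting $\widehat u(\xi)$ to be the unique matrix whose $j$-th column is the minimal-norm preimage under $\sigma_P(\xi)$ of the $j$-th column of $\widehat f(\xi)$; equivalently $\widehat u(\xi)=\sigma_P(\xi)^{+}\widehat f(\xi)$, where $\sigma_P(\xi)^{+}$ is the Moore--Penrose pseudoinverse. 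Since the columns of $\widehat u(\xi)$ lie in $(\ker\sigma_P(\xi))^\perp$, the bound $\|\sigma_P(\xi)^{+}\widehat f(\xi)\|_{\HS}\le (\lambda_{\min}^{>0}[\sigma_P(\xi)])^{-1}\|\widehat f(\xi)\|_{\HS}\le C^{-1}\langle\xi\rangle^{k}\|\widehat f(\xi)\|_{\HS}$ holds; combined with the rapid decay of $\|\widehat f(\xi)\|_{\HS}$ from Proposition \ref{prop_charact_smooth_and_dis}, the sequence $(\widehat u(\xi))$ also decays rapidly, so $u\in C^\infty(G)$. Finally $\widehat{Pu}(\xi)=\sigma_P(\xi)\sigma_P(\xi)^{+}\widehat f(\xi)=\widehat f(\xi)$, since $\sigma_P(\xi)\sigma_P(\xi)^{+}$ is the orthogonal projection onto $\operatorname{ran}\sigma_P(\xi)$, which already contains every column of $\widehat f(\xi)$; hence $Pu=f$.

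For the necessity direction, suppose \eqref{ineq_diof_gs} fails. Then there is a sequence $[\xi_n]\in\widehat G$ with $\sigma_P(\xi_n)\ne 0$ and $0<\lambda_{\min}^{>0}[\sigma_P(\xi_n)]<n^{-1}\langle\xi_n\rangle^{-n}$; the $\xi_n$ are necessarily distinct since $\lambda_{\min}^{>0}[\sigma_P(\xi)]\to 0$ forces $\langle\xi_n\rangle\to\infty$. Pick unit vectors $v_n\in(\ker\sigma_P(\xi_n))^\perp$ with $\|\sigma_P(\xi_n)v_n\|_2=\lambda_{\min}^{>0}[\sigma_P(\xi_n)]$ and set $w_n=\sigma_P(\xi_n)v_n\in\operatorname{ran}\sigma_P(\xi_n)$. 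Define $f_n\in C^\infty(G)$ (indeed a trigonometric polynomial) by $\widehat{f_n}(\xi)=0$ for $\xi\ne\xi_n$ and $\widehat{f_n}(\xi_n)=\big(w_n\ 0\ \cdots\ 0\big)/\|w_n\|_2$. By construction each column of $\widehat{f_n}(\xi_n)$ lies in $\operatorname{ran}\sigma_P(\xi_n)$, so $f_n\in(\ker{}^tP)^0$ by Proposition \ref{prop_kernel_annihi_range}, and $\|f_n\|_{L^2(G)}=\sqrt{d_{\xi_n}}$. If $P$ were globally solvable there would be $u_n\in C^\infty(G)$ with $Pu_n=f_n$; I then show no uniform Sobolev bound can hold, contradicting the closed-graph/open-mapping consequence of closed range (the standard fact that a globally solvable left-invariant operator admits, on $(\ker{}^tP)^0$, a continuous linear right inverse into some fixed $H^s$, equivalently a uniform estimate $\|u\|_{H^{s-k}}\lesssim \|Pu\|_{H^s}$ on $(\ker P|_{H^{s-k}})^\perp$ for some $k$). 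Concretely, any $u_n$ solving $Pu_n=f_n$ must satisfy $\widehat{u_n}(\xi_n)\supseteq$ a column equal to $v_n/\|w_n\|_2$ modulo $\ker\sigma_P(\xi_n)$, forcing $\|\widehat{u_n}(\xi_n)\|_{\HS}\ge \|w_n\|_2^{-1}=(\lambda_{\min}^{>0}[\sigma_P(\xi_n)])^{-1}\cdot(\text{unit column norm})$, which grows faster than any power of $\langle\xi_n\rangle$; feeding this into the would-be uniform estimate yields a contradiction.

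The main obstacle is the discrepancy, in the non-normal case, between $\operatorname{ran}\sigma_P(\xi)$ and $(\ker\sigma_P(\xi))^\perp$: the definition of $(\ker{}^tP)^0$ via Proposition \ref{prop_kernel_annihi_range} involves $\operatorname{ran}\sigma_P(\xi)$, whereas $\lambda_{\min}^{>0}$ is the smallest nonzero \emph{singular} value, controlling $\sigma_P(\xi)$ restricted to $(\ker\sigma_P(\xi))^\perp$. The resolution is that $\sigma_P(\xi)$ maps $(\ker\sigma_P(\xi))^\perp$ bijectively onto $\operatorname{ran}\sigma_P(\xi)$, and the least nonzero singular value of $\sigma_P(\xi)$ equals the reciprocal of the operator norm of the inverse of this restriction; so the pseudoinverse bound $\|\sigma_P(\xi)^{+}\|_{\mathrm{op}}=(\lambda_{\min}^{>0}[\sigma_P(\xi)])^{-1}$ is exactly what is needed in both directions, and no appeal to normality is required. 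A second, routine point to handle carefully is the precise form of the uniform estimate extracted from closed range; I would phrase it via the Fréchet-space open mapping theorem applied to $P\colon C^\infty(G)\to(\ker{}^tP)^0$ (closed range in a Fréchet space makes this a topological surjection onto a Fréchet space), then transfer the continuity of a right inverse into a seminorm estimate by the explicit description of the topology of $C^\infty(G)$ via the weights $\langle\xi\rangle$.
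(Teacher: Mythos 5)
Your proposal is correct. Both directions reduce, as in the paper, to the linear algebra of $\sigma_P(\xi)$ via Proposition~\ref{prop_kernel_annihi_range} and the smoothness criterion of Proposition~\ref{prop_charact_smooth_and_dis}.

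In the sufficiency direction your Moore--Penrose pseudoinverse formulation is essentially the paper's construction made cleaner. The paper defines $\widehat u(\xi)$ through a block-form reduction of $\sigma_P(\xi)$ that, as written via a \emph{unitary conjugation}, implicitly requires $\operatorname{ran}\sigma_P(\xi)=(\ker\sigma_P(\xi))^\perp$ --- which can fail for a non-normal $\sigma_P(\xi)$. Your observation that $\sigma_P(\xi)^+$ has operator norm $(\lambda_{\min}^{>0}[\sigma_P(\xi)])^{-1}$, that it maps into $(\ker\sigma_P(\xi))^\perp$, and that $\sigma_P(\xi)\sigma_P(\xi)^+$ is the orthogonal projection onto $\operatorname{ran}\sigma_P(\xi)$ gives exactly the same decay estimate without that implicit assumption, so you have tightened the argument at this step.

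In the necessity direction you take a genuinely different, heavier route. The paper assembles a \emph{single} $f\in C^\infty(G)\cap(\ker {}^tP)^0$ with $\widehat f(\xi_n)=\bigl(\sigma_P(\xi_n)v_n\;\;0\;\;\cdots\;\;0\bigr)$, whose $\HS$ norms decay faster than any polynomial by construction, and then shows directly that any $u$ with $Pu=f$ has $\|\widehat u(\xi_n)\|_{\HS}\ge \|v_n\|_2=1$ for every $n$ (using the orthogonality of $v_n$ to $\ker\sigma_P(\xi_n)$), whence $u\notin C^\infty(G)$ by Proposition~\ref{prop_charact_smooth_and_dis}; no functional analysis is required. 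You instead fix a sequence of unit-norm $f_n$ and invoke a uniform Sobolev a priori estimate obtained from the Fréchet open mapping theorem, then show the $f_n$ violate it. This works, but one sub-claim in your parenthetical is not quite right: closed range in a Fréchet space does \emph{not} in general yield a continuous linear right inverse into a fixed $H^s$ (Fréchet surjections need not admit continuous linear sections); what open mapping actually gives is an a priori estimate of the form $\inf_{v\in\ker P}\|u-v\|_{L^2}\le C\|Pu\|_{H^{s'}}$ for some $s',C>0$, and it is this estimate formulation that your contradiction uses, so the argument goes through once phrased carefully. A tiny nit: the $[\xi_n]$ are not automatically distinct just because $\langle\xi_n\rangle\to\infty$ (finitely many repeats are possible); pass to a subsequence. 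You could bypass the open mapping theorem entirely by summing your $f_n$, normalized so the coefficients decay rapidly, into a single $f$, exactly as the paper does.
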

\begin{proof}
Suppose that the inequality \eqref{ineq_diof_gs} is satisfied, and let $f\in C^\infty(G)\cap (\ker {}^tP)^0$.  We will construct $u\in C^\infty(G)$ such that $Pu=f$, as follows.

Let $[\xi]\in\widehat{G}$. If $\sigma_P(\xi)=0$, we must have $\widehat{f}(\xi)=0$. Then, let $\widehat{u}(\xi)=0$. 

Now assume $\sigma_P(\xi)\neq 0$. Due to the decomposition 
\begin{equation*}
    \ker \sigma_P(\xi)\oplus( \ker \sigma_P(\xi))^\perp=\C^{d_\xi},
\end{equation*}
by conjugating $\xi$ with a unitary change of basis (which amounts to choosing a different representative of the class $[\xi])$, we may assume that $\sigma_P(\xi)$ is given in block form
\begin{equation*}
    \sigma_P(\xi)=\begin{pmatrix}
        \tilde \sigma_P(\xi)&0\\
        0&0
    \end{pmatrix},
\end{equation*}
where the matrix $\tilde \sigma_P(\xi)$ corresponds to the restriction
\begin{equation*}
    \tilde \sigma_P(\xi)= \sigma_P(\xi)|_{(\ker \sigma_P(\xi))^\perp}:(\ker \sigma_P(\xi))^\perp\to \operatorname{ran}\sigma_P(\xi),
\end{equation*}
is invertible and satisfies
\begin{equation*}
     \lambda_{\min}[\tilde\sigma_P(\xi)]= \lambda_{\min}^{>0}[\sigma_P(\xi)]\geq C\langle \xi \rangle^{-k}.
\end{equation*}
This also implies that 
\begin{equation*}
    \| \tilde \sigma_P(\xi)^{-1}\|_{\operatorname{op}}\leq C^{-1}\langle \xi \rangle^{k},
\end{equation*}
and since the columns of $\widehat{f}(\xi)$ are in the range of $\sigma_P(\xi)$, by Proposition \ref{prop_kernel_annihi_range}, we must have that $\widehat{f}(\xi)$ is given in block form by
\begin{equation*}
    \widehat{f}(\xi)= \begin{pmatrix}
        {\widehat{f_1}}(\xi)\\0
    \end{pmatrix},
\end{equation*}
where $\widehat{f_1}(\xi)\in\C^{d^0_\xi\times d_\xi}$, $d^0_\xi = \dim \operatorname{ran}\sigma_P(\xi)=\dim (\ker \sigma_P(\xi))^{\perp}$.

Therefore, defining
\begin{equation*}
    \widehat{u}(\xi)=\begin{pmatrix}
        \tilde \sigma_P(\xi)^{-1}\widehat{f_1}(\xi)\\
        0
    \end{pmatrix}\in\C^{d_\xi\times d_\xi},
\end{equation*}
we have that $\sigma_P(\xi)\widehat{u}(\xi)=\widehat{f}(\xi)$ and, moreover,
\begin{align*}
    \|\widehat{u}(\xi)\|_{\HS}&=\| \tilde \sigma_P(\xi)^{-1}\widehat{f_1}(\xi)\|_{\HS}\\
    &\leq \| \tilde \sigma_P(\xi)^{-1}\|_{\operatorname{op}}\|\widehat{f}(\xi)\|_{\HS}\\
    &\leq C^{-1}\langle \xi \rangle^{k}\|\widehat{f}(\xi)\|_{\HS}.
\end{align*}
Due to the rapid decay of the Fourier coefficients of $f$, it follows that $u\in C^\infty(G)$. Moreover, the equality $\sigma_P(\xi)\widehat{u}(\xi)=\widehat{f}(\xi)$ for every $[\xi]\in\widehat{G}$ guarantees that $Pu=f$. This proves that $P$ is globally solvable.

Conversely, assume that inequality \eqref{ineq_diof_gs} does not hold. Then there exist sequences of distinct elements $[\xi_n]\in\widehat{G}$ and $v_{n}\in (\ker \sigma_P(\xi_n))^\perp\subset \C^{d_{\xi_n}}$, $\|v_n\|_2=1$, such that
\begin{equation*}
  0< \| \sigma_P(\xi_n)v_n\|_2<\langle \xi_n \rangle ^{-n},
\end{equation*}
for every $n\in\N$.

Consider $f\in C^\infty(G)$ given by the Fourier coefficients:
\begin{equation*}
    \widehat{f}(\xi)=\begin{cases}
        0&\text{ if }\xi\neq \xi_n,\ \forall n\in\N,\\
        \begin{pmatrix}
            \sigma_P(\xi_n)v_n&0&\cdots&0 
        \end{pmatrix}_{d_{\xi_n}\times d_{\xi_n}}&\text{ if }\xi= \xi_n,\ n\in\N.
    \end{cases}
\end{equation*}
Then, $f$ is well-defined, since 
\begin{equation*}
    \|\widehat{f}(\xi_n)\|_{\HS}=\|\sigma_P(\xi_n)v_n\|_2<\langle \xi_n \rangle ^{-n},
\end{equation*}
for every $n\in\N$ and $ \|\widehat{f}(\xi)\|_{\HS}=0$, for every other $[\xi]\in\widehat{G}$. Moreover, note that the columns of $\widehat{f}(\xi)$ are in $ \operatorname{ran}\sigma_P(\xi)$, for every $[\xi]\in\widehat{G}$, since 
\begin{equation*}
    \sigma_P(\xi_n)\begin{pmatrix}
            v_n&0&\cdots&0 
        \end{pmatrix}_{d_{\xi_n}\times d_{\xi_n}}=\widehat{f}(\xi_n),
\end{equation*}
for every $n\in\N$. Therefore by Proposition \ref{prop_kernel_annihi_range}, $f\in (\ker {}^tP)^0$. On the other hand, if $Pu=f$, then  the Fourier coefficients of $u$ must satisfy
\begin{equation*}
    \widehat{u}(\xi_n)=
        \begin{pmatrix}
            v_n&0&\cdots&0 
        \end{pmatrix}_{d_{\xi_n}\times d_{\xi_n}}+w_{\xi_n},
\end{equation*}
where the columns of $w_{\xi_n}\in \C^{d_\xi\times d_\xi}$ are in $\ker \sigma_P(\xi_n)$, and hence $\sigma_P(\xi_n)w_{\xi_n}=0$. Therefore, by orthogonality, we conclude that 
\begin{equation*}
    \|\widehat{u}(\xi_n)\|_{\HS}\geq \|v_n\|_2=1,
\end{equation*}
for every $n\in\N$. Therefore,  $u\in\mathcal{D}'(G)\backslash C^\infty(G)$ and, consequently, $P$ is not globally solvable.
\end{proof}

As an immediate consequence of the result above and Corollary \ref{coro_poincare_multipliers}, we obtain:
\begin{corollary}
    Let $G$ be a compact Lie group and $P:C^\infty(G)\to C^\infty(G)$ a left-invariant continuous linear operator. The following are equivalent:
    \begin{enumerate}
        \item $P$ is globally solvable (has closed range);
        \item There exist $c>0$ and $\delta\geq 1$ such that
        \begin{equation*}
        \|\nabla_{G} f\|_{L^2(G)}^{\delta-1}\| Pf\|_{L^2(G)}\geq c\|f\|_{L^2(G)}^\delta,
    \end{equation*}
    for every $f\in (\ker P|_{H^1(G)})^\perp$.
    \end{enumerate}
\end{corollary}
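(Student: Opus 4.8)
The plan is to prove the final corollary by combining the two characterizations already established: Proposition \ref{prop_globally_solvable}, which characterizes global solvability of a left-invariant continuous linear operator $P$ by the existence of constants $C,k>0$ with $\lambda_{\min}^{>0}[\sigma_P(\xi)]\geq C\langle\xi\rangle^{-k}$ for all $[\xi]$ with $\sigma_P(\xi)\neq 0$; and Corollary \ref{coro_poincare_multipliers}, which characterizes the directional Poincaré inequality with exponent $\delta$ by the existence of $C>0$ with $\lambda_{\min}^{>0}[\sigma_P(\xi)]\geq C\langle\xi\rangle^{-(\delta-1)}$ for all such $[\xi]$. The point is simply that the two symbol conditions are logically equivalent once we are allowed to quantify over the exponent: $k$ is an arbitrary positive real, and so is $\delta-1$ for $\delta\geq 1$.

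First I would prove the implication $(1)\Rightarrow(2)$. Assume $P$ is globally solvable. By Proposition \ref{prop_globally_solvable} there exist $C,k>0$ with $\lambda_{\min}^{>0}[\sigma_P(\xi)]\geq C\langle\xi\rangle^{-k}$ for every $[\xi]\in\widehat{G}$ with $\sigma_P(\xi)\neq 0$. Set $\delta\defeq k+1$; then $\delta\geq 1$ and the inequality reads $\lambda_{\min}^{>0}[\sigma_P(\xi)]\geq C\langle\xi\rangle^{-(\delta-1)}$, which is exactly condition \eqref{ineq_diof_multipliers}. By Corollary \ref{coro_poincare_multipliers} there exists $c>0$ so that $\|\nabla_G f\|_{L^2(G)}^{\delta-1}\|Pf\|_{L^2(G)}\geq c\|f\|_{L^2(G)}^{\delta}$ for every $f\in(\ker P|_{H^1(G)})^\perp$, giving $(2)$.

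Conversely, for $(2)\Rightarrow(1)$, suppose there exist $c>0$ and $\delta\geq 1$ such that the stated directional Poincaré inequality holds for every $f\in(\ker P|_{H^1(G)})^\perp$. By Corollary \ref{coro_poincare_multipliers} there exists $C>0$ with $\lambda_{\min}^{>0}[\sigma_P(\xi)]\geq C\langle\xi\rangle^{-(\delta-1)}$ for every $[\xi]\in\widehat{G}$ with $\sigma_P(\xi)\neq 0$. If $\delta\geq 1$ we may take $k\defeq\delta-1\geq 0$; if $k=0$ this is a uniform lower bound which in particular gives $\lambda_{\min}^{>0}[\sigma_P(\xi)]\geq C\langle\xi\rangle^{-1}$, so in any case there exist $C,k>0$ (e.g. $k=\max\{\delta-1,1\}$) with $\lambda_{\min}^{>0}[\sigma_P(\xi)]\geq C\langle\xi\rangle^{-k}$. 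Proposition \ref{prop_globally_solvable} then yields that $P$ is globally solvable (has closed range), completing the equivalence.

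I do not expect a genuine obstacle here: the corollary is a bookkeeping consequence of two substantial results proved earlier, and the only mild subtlety is the slight mismatch between "$\exists k>0$" (global solvability) and "$\exists\delta\geq 1$" (Poincaré), which is harmless because weakening a lower bound $\langle\xi\rangle^{-a}$ to $\langle\xi\rangle^{-b}$ with $b\geq a$ is automatic, as $\langle\xi\rangle\geq 1$. The main thing to be careful about is to invoke Corollary \ref{coro_poincare_multipliers} rather than Theorem \ref{theo_directional_poincare_general}, since $P$ is a general Fourier multiplier and not necessarily a vector field, and to state the subspace $(\ker P|_{H^1(G)})^\perp$ consistently with Definition \ref{def_EY}.
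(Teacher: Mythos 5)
Your proof is correct and follows exactly the route the paper intends: the paper presents this corollary as an immediate consequence of Proposition \ref{prop_globally_solvable} and Corollary \ref{coro_poincare_multipliers}, and you have simply spelled out the (harmless) quantifier bookkeeping between $k>0$ and $\delta\geq 1$, which is the only point that needs a word.
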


Proposition \ref{prop_globally_solvable}, together with the proof of Theorem \ref{theo_directional_sphere}, also implies in the following corollary.

\begin{corollary}\label{coro_GS_sphere}
    Every real left-invariant vector field on $\S$ is globally solvable.
\end{corollary}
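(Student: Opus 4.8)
The plan is to combine the symbol estimate already established inside the proof of Theorem \ref{theo_directional_sphere} with the characterization of global solvability given in Proposition \ref{prop_globally_solvable}. First I would dispose of the degenerate case $Y=0$, which is globally solvable since then $(\ker {}^tY)^{0}=\{0\}$. So assume $Y\neq 0$; since $\{D_1,D_2,D_3\}$ is a basis of the real left-invariant vector fields on $\S$, we can write $Y=\alpha_1D_1+\alpha_2D_2+\alpha_3D_3$ for some $\alpha\in\R^3\backslash\{0\}$, which is precisely the setup of Theorem \ref{theo_directional_sphere}.

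Next I would invoke the key estimate \eqref{diof_condition_S^3} obtained in the course of proving that theorem, namely $\lambda_{\min}^{>0}[\sigma_Y(\ell)]\geq \|\alpha\|/2$ for every $\ell\in\frac{1}{2}\N_0$ with $\sigma_Y(\ell)\neq 0$ (under the identification $\widehat{\S}\sim\frac{1}{2}\N_0$). Since this bound is uniform in $\ell$ and $\langle\ell\rangle\geq 1$, it gives in particular $\lambda_{\min}^{>0}[\sigma_Y(\ell)]\geq \frac{\|\alpha\|}{2}\langle\ell\rangle^{-1}$, so inequality \eqref{ineq_diof_gs} holds with $C=\|\alpha\|/2>0$ and $k=1$. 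Proposition \ref{prop_globally_solvable} then yields that $Y$ has closed range, i.e.\ is globally solvable, as claimed.

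There is essentially no obstacle left once Theorem \ref{theo_directional_sphere} is in hand: the only points requiring a little care are bookkeeping ones already resolved in the cited proof --- identifying $\widehat{\S}$ with $\frac{1}{2}\N_0$, and using that the singular values of the anti-hermitian matrix $\sigma_Y(\ell)$ are the absolute values of its eigenvalues. If one preferred a more self-contained argument avoiding direct reference to the internal estimate, one could instead quote Theorem \ref{theo_directional_sphere} to obtain a directional Poincaré inequality on $\S$ with $\delta=1$ and then apply the corollary preceding this one, which equates global solvability of a Fourier multiplier with the existence of such an inequality for some $\delta\geq 1$; the route through Proposition \ref{prop_globally_solvable} is simply the shortest.
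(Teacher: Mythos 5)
Your proposal matches the paper's proof essentially verbatim: both invoke the uniform lower bound $\lambda_{\min}^{>0}[\sigma_Y(\ell)]\geq\|\alpha\|/2$ established inside the proof of Theorem \ref{theo_directional_sphere} and then apply Proposition \ref{prop_globally_solvable}. Your extra remark about the $Y=0$ case and the alternative route through Corollary \ref{coro_poincare_multipliers} are correct but add nothing beyond the paper's argument.
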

\begin{proof}
    Let $Y=\alpha_1 D_1+\alpha_2 D_2+\alpha_3 D_3$ be an arbitrary real vector field on $\S$. Note that from the proof of Theorem \ref{theo_directional_sphere} we have that $\lambda_{\min}^{>0}[\sigma_Y(\ell)]\geq \frac{\|\alpha\|}{2}$, for every $\ell\in\frac{1}{2}\N_0$. Therefore by Proposition \ref{prop_globally_solvable}, $Y$ is globally solvable.
\end{proof}

\section{Directional Poincaré inequality for tube-type vector fields}\label{sect_tube}

In view of the relationship between the directional Poincaré inequality and global solvability proved in the previous section, together with the fact that a (real) tube-type vector field on $\T^1\times G$ is globally solvable if and only the corresponding normalized vector field is globally solvable, in this section we explore a class of tube-type vector fields on $\T^1\times G$, and show that an equivalence between the existence of a directional Poincaré inequality and the global solvability of such vector fields holds as in the left-invariant case.

\begin{obs}\label{remark_normal_ran_ker}
For any Fourier multiplier $P$ which is normal with respect to the $L^2$ inner product, the fact that 
    \begin{equation*}
        (\ker \sigma_P(\xi))^\perp=\operatorname{ran}\sigma_P(\xi)
    \end{equation*}
    (as mentioned in the proof of Proposition \ref{prop_composed_ineq_general}) together with  Propositions \ref{prop_EP_equiv_Fourier} and  \ref{prop_kernel_annihi_range}, implies that
    \begin{equation*}
        (\ker {}^tP)^0=(\ker P|_{H^1(G)})^\perp\cap C^\infty(G).
    \end{equation*}
    In particular, since left-invariant vector fields are antisymmetric, therefore normal, with respect to the $L^2$ inner product, the equality above also holds for left-invariant vector fields on $G$. With that in mind, an  equivalent characterization of the set $(\ker Y|_{H^1(G)})^\perp$  can be obtained as follows: we have that $(\ker Y|_{H^1(G)})^\perp=\overline{(\ker {}^tY)^0}$, where the closure is taken with respect to the $H^1(G)$ norm on $C^\infty(G)$. It is worth mentioning that this holds more generally: for any real vector field $Y$ on $G$, using that ${}^tY=-Y$ on $\mathcal{D}'(G)$ and that $Y$ is real, we can also conclude that  $(\ker {}^tY)^0=(\ker Y|_{H^1(G)})^\perp\cap C^\infty(G)$, and so the previous characterization also holds in this case. 
\end{obs}

Before stating and proving the main result of this section, we fix the following notation: let $X$ be a left-invariant vector field on a compact Lie group $G$. Since $X$ antisymmetric with respect to the $L^2$ inner product on $G$, 
 one can show (see \cite[Remark 10.4.20]{RT2010_book})  its symbol $\sigma_X(\xi)$  is anti-hermitian for every $[\xi]\in\widehat{G}$. Therefore its eigenvalues are purely imaginary and we denote them by $i\mu_r(\xi)\in i\R$, for  $1\leq r\leq d_\xi$ and every $[\xi]\in\widehat{G}$. Moreover, we can always choose a representative $\xi\in [\xi]$ such that $\sigma_X(\xi)$ is diagonal.

\begin{theorem}\label{theo_poincare_tube}
    Let $a(t)\in C^\infty(\T^1,\R)$ and $X$ a left-invariant vector field on a compact Lie group $G$. Consider the real vector field
\begin{equation}\label{eq_tube_vector}
    Y= \partial_t+a(t)X
\end{equation}
 on $\T^1\times G$. Given $\delta\geq 1$, there exists $c>0$ such that
\begin{equation}\label{ineq_poincare_variable_coef}
    \|\nabla_{\T^1\times G}f\|_{L^2(\T^1\times G)}^{\delta-1}\| Yf\|_{L^2(\T^1\times G)}\geq c\|f\|_{L^2(\T^1\times G)}^\delta,
\end{equation}
for every $f\in (\ker Y|_{H^1(\T^1\times G)})^\perp$, 
if and only if there exists $C>0$ such that
\begin{equation}\label{eq_diof_tube}
    \min_{1\leq r\leq d_\xi}| k+a_0\mu_r(\xi) |\geq C(|k|+\langle\xi\rangle)^{-(\delta-1)},
\end{equation}
for every $(k,[\xi])\in\Z\times G$ satisfying $ \min_{1\leq r\leq d_\xi}| k+a_0\mu_r(\xi) |\neq 0$, where
\begin{equation*}
    a_0=\frac{1}{2\pi}\int_{0}^{2\pi} a(t)dt.
\end{equation*}
\end{theorem}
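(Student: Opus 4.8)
The plan is to reduce the tube-type vector field to a Fourier-multiplier situation by taking a partial Fourier transform in the torus variable $t\in\T^1$, and then to replace the variable coefficient $a(t)$ by its average $a_0$ via a smooth change of variables, exactly as is done in the classical Greenfield--Wallach/Bergamasco-type analysis of global solvability. Concretely, for $f\in C^\infty(\T^1\times G)$ write its mixed Fourier series $f(t,x)=\sum_{k\in\Z}\sum_{[\xi]}d_\xi e^{ikt}\Tr(\xi(x)\widehat f(k,\xi))$, where $\widehat f(k,\xi)$ is the Fourier coefficient in both variables. Since $X$ is left-invariant with anti-hermitian symbol, after choosing a representative of each $[\xi]$ diagonalizing $\sigma_X(\xi)$, the operator $Y$ acts diagonally on the Fourier side in $x$ and as $\partial_t + a(t)\,i\mu_r(\xi)$ in $t$ on each coordinate. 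Thus solving $Yu=f$ (or estimating $\|Yf\|_{L^2}$) decouples into a family, indexed by $([\xi],r)$, of one-dimensional ODE problems $u'(t)+i\mu_r(\xi)a(t)u(t)=f_r(t)$ on $\T^1$, whose $L^2$ behaviour is governed by the numbers $k+a_0\mu_r(\xi)$ through the integrating factor $\exp(i\mu_r(\xi)\int_0^t a(s)ds)$. This is the content that converts \eqref{eq_diof_tube} into an estimate analogous to \eqref{ineq_min_sin_nonzero_general_poincare}.

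For the ``if'' direction, I would argue as in the proof of Theorem \ref{theo_directional_poincare_general}: use \eqref{eq_diof_tube} together with the integrating-factor bound for the scalar ODE $u'+i\mu_r(\xi)a(t)u=f_r$ to get, for $f\in(\ker Y|_{H^1})^\perp$, a lower bound of the form $\|Yf\|_{L^2}^2\gtrsim \sum_{k,[\xi]} d_\xi (|k|+\langle\xi\rangle)^{-2(\delta-1)}\|\widehat f(k,\xi)\|_{\HS}^2$ (here one absorbs the error between $a(t)$ and $a_0$ by the smooth change of variable $t\mapsto t+\frac1{i\mu_r}\int_0^t(a(s)-a_0)ds$, which is a diffeomorphism of $\T^1$ up to a constant and introduces only bounded multiplicative factors in $L^2$, uniformly in the frequency). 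Then the same ``high-frequency splitting'' device used in Theorem \ref{theo_directional_poincare_general} — discard the frequencies with $|k|^2+\nu_\xi \geq 4\|\nabla f\|_{L^2}^2/\|f\|_{L^2}^2$, which carry at most $\tfrac34\|f\|_{L^2}^2$, and bound $(|k|+\langle\xi\rangle)^{-2(\delta-1)}$ from below on the remaining frequencies by a constant times $(\|f\|_{L^2}/\|\nabla f\|_{L^2})^{2(\delta-1)}$ — yields \eqref{ineq_poincare_variable_coef}. For the ``only if'' direction, if \eqref{eq_diof_tube} fails one produces, for each $n$, a frequency $(k_n,[\xi_n])$ and an index $r_n$ with $0<|k_n+a_0\mu_{r_n}(\xi_n)|<\tfrac1n(|k_n|+\langle\xi_n\rangle)^{-(\delta-1)}$, and builds a test function $f_n$ supported on that single frequency and the corresponding eigenvector of $\sigma_X(\xi_n)$ (so $f_n\in(\ker Y|_{H^1})^\perp$ by the Fourier characterization, Proposition \ref{prop_EP_equiv_Fourier}, extended to $\T^1\times G$ via Remark \ref{remark_normal_ran_ker}); the integrating-factor computation gives $\|Yf_n\|_{L^2}\le \tfrac{c_a}{n}(|k_n|+\langle\xi_n\rangle)^{-(\delta-1)}$ while $\|\nabla f_n\|_{L^2}^{\delta-1}\sim(|k_n|+\langle\xi_n\rangle)^{\delta-1}$ and $\|f_n\|_{L^2}=1$, so \eqref{ineq_poincare_variable_coef} fails.

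The main obstacle I anticipate is making the passage from the variable coefficient $a(t)$ to the constant $a_0$ fully rigorous and \emph{uniform in the frequency} $\mu_r(\xi)$, which can be arbitrarily large. The naive change of variables $t\mapsto \phi(t)=t+\frac1{i\mu}\int_0^t(a-a_0)$ only works cleanly when $\mu\neq 0$, and one must check that the induced multiplier on $L^2(\T^1)$ has operator norm bounded independently of $\mu$ — this is true because $\phi$ differs from the identity by a fixed smooth function scaled by $1/\mu$, but it requires care when $\mu$ is small (where instead one uses directly that $a(t)\mu_r(\xi)$ is a bounded perturbation) versus large. An alternative, which avoids the change of variables entirely, is to work directly with the solution operator of $u'+i\mu a(t)u=f$ on $\T^1$: when $k+a_0\mu\notin 2\pi\Z\setminus\{0\}$-type resonance is excluded, this operator is given by an explicit kernel involving $\exp(i\mu\int a)$ and $(1-e^{2\pi i a_0\mu})^{-1}$, and its $L^2\to L^2$ norm is comparable to $|k+a_0\mu|^{-1}$ up to constants depending only on $\|a\|_{C^0}$; I would lean toward this second route, as it keeps all estimates manifestly in terms of the quantity appearing in \eqref{eq_diof_tube}. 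Everything else — the Plancherel bookkeeping on $\T^1\times G$, the high-frequency splitting, the construction of the failing test functions — is a routine adaptation of the arguments already given for Theorem \ref{theo_directional_poincare_general} and Proposition \ref{prop_globally_solvable}.
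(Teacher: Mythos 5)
Your overall plan — decouple in the $G$-variable, observe that $Y$ acts as a one-parameter family of first-order ODEs on $\T^1$, and pass from $a(t)$ to $a_0$ to land on a Fourier multiplier to which Theorem~\ref{theo_directional_poincare_general} applies — is the right idea, and is in spirit what the paper does. But the specific mechanism you propose for the passage $a(t)\leadsto a_0$ has a defect that also infects your ``only if'' direction, so as written the argument has a real gap.

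The central problem is that you try to implement the passage by a \emph{change of variables} $t\mapsto\phi(t)=t+\tfrac1{i\mu}\int_0^t(a-a_0)$. That map is complex-valued (the coefficient $1/(i\mu)$ is purely imaginary while $\int_0^t(a-a_0)$ is real), so it is not a diffeomorphism of $\T^1$ and cannot be made sense of. What is actually available — and what the paper uses — is not a change of variables but a conjugation by the \emph{unimodular phase} $e^{-i\mu_r(\xi)(\int_0^t a(\tau)d\tau-a_0t)}$, assembled frequency-by-frequency into an operator $\Psi$ on $L^2(\T^1\times G)$. Because the phase has modulus one, $\Psi$ is an exact $L^2$ isometry, and a direct computation shows $\Psi^{-1}Y\Psi = Y_0 := \partial_t + a_0X$ while $\|\nabla(\Psi^{\pm1}f)\|_{L^2}\le(\operatorname{var}(a)+1)^{1/2}\|\nabla f\|_{L^2}$ with a constant depending only on $a$. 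This dissolves your worry about uniformity in $\mu$: the loss is a fixed factor, not something to be fought at high frequency versus low frequency, and there is no special case at $\mu=0$. Once one has $\Psi$, both implications are two-line reductions to Theorem~\ref{theo_directional_poincare_general} applied to $Y_0$.

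This matters concretely in your ``only if'' step. You build $f_n$ as a single mode $e^{ik_nt}\xi_n(x)_{r_n s}$ and assert that an ``integrating-factor computation'' gives $\|Yf_n\|_{L^2}\lesssim\tfrac1n(|k_n|+\langle\xi_n\rangle)^{-(\delta-1)}$. That is false for non-constant $a$: for a pure mode one gets
\begin{equation*}
\|Yf_n\|_{L^2}^2=\frac{1}{2\pi}\int_0^{2\pi}\bigl|k_n+a(t)\mu_{r_n}(\xi_n)\bigr|^2\,dt,
\end{equation*}
which is of size $\mu_{r_n}(\xi_n)^2\cdot\operatorname{var}(a)$ whenever $\mu_{r_n}(\xi_n)$ is large, regardless of how small $k_n+a_0\mu_{r_n}(\xi_n)$ is. The test function must be the phase-twisted mode $\Psi\bigl(e^{ik_nt}\xi_n(x)_{r_n s}\bigr)$, which is exactly what the isometry buys you. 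Relatedly, you invoke Proposition~\ref{prop_EP_equiv_Fourier} to place $f_n$ in $(\ker Y|_{H^1})^\perp$, but that proposition is only for left-invariant operators, and $Y=\partial_t+a(t)X$ is not left-invariant on $\T^1\times G$ because $a$ depends on $t$. The correct route is via Remark~\ref{remark_normal_ran_ker} (valid for any real vector field) together with the fact that $\Psi^{-1}$ restricts to a bijection $(\ker{}^tY)^0\to(\ker{}^tY_0)^0$. Your alternative suggestion of working directly with the explicit solution kernel on $\T^1$ could be made to work, but it is more delicate and still requires the unimodular phase to appear in the kernel; the conjugation-by-$\Psi$ approach is the cleaner path.
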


\begin{proof}
For each $[\xi]\in\widehat{G}$, choose a representative $\xi$ of $[\xi]$ such that $\sigma_X(\xi)$ is diagonal. Let $\Psi:C^\infty(\T^1\times G)\to \C^\infty(\T^1\times G)$ be given by
\begin{equation*}
    \Psi f(t,x)=\sum_{[\xi]\in\widehat{G}}d_\xi \sum_{r,s=1}^{d_\xi} e^{-i\mu_r(\xi)(\int_0^t a(\tau)d\tau-a_0t)}\widehat{f}(t,\xi)_{rs}\xi_{sr}(x),
\end{equation*}
for $f\in C^\infty(\T^1\times G)$.
 It is easy to check that  $\|\Psi f\|_{L^2}=\|\Psi^{-1} f\|_{L^2}=\|f\|_{L^2}$, $\Psi$ is bijective and $\Psi^{-1}Y\Psi =Y_0$, where
 \begin{equation*}
    \Psi^{-1} f(t,x)=\sum_{[\xi]\in\widehat{G}}d_\xi \sum_{r,s=1}^{d_\xi} e^{i\mu_r(\xi)(\int_0^t a(\tau)d\tau-a_0t)}\widehat{f}(t,\xi)_{rs}\xi_{sr}(x);
\end{equation*}
 see \cite{KWR_product} for more details. 

 Next, note that for $f\in C^\infty(\T^1\times G)$, we have that 
\begin{align*}
    \|\nabla &(\Psi^{-1} f)\|_{L^2(\T^1\times G)}^2=\sum_{[\xi]\in\widehat{G}}d_\xi\sum_{r,s=1}^{d_\xi}\int_0^{2\pi}|\partial_t[\widehat{\Psi^{-1}f}(t,\xi)_{rs}|^2+|\mu_r(\xi)\widehat{\Psi^{-1}f}(t,\xi)_{rs}|^2dt\\
    &=\sum_{[\xi]\in\widehat{G}}d_\xi\sum_{r,s=1}^{d_\xi}\int_0^{2\pi} |\partial_t\widehat{f}(t,\xi)_{rs}|^2+|\mu_{r}(\xi)(a(t)-a_0)\widehat{f}(t,\xi)_{rs}|^2+|\mu_{r}(\xi)\widehat{f}(t,\xi)_{rs}|^2dt\\
    &\leq \sum_{[\xi]\in\widehat{G}}d_\xi\sum_{r,s=1}^{d_\xi}\int_0^{2\pi}((a(t)-a_0)^2+1)(|\partial_t\widehat{f}(t,\xi)_{rs}|^2+|\mu_{r}(\xi)\widehat{f}(t,\xi)_{rs}|^2)dt\\
    &\leq \Big(\max_{0\leq s\leq 2\pi}(a(s)-a_0)^2+1\Big)\sum_{[\xi]\in\widehat{G}}d_\xi\sum_{r,s=1}^{d_\xi}\int_0^{2\pi}|\partial_t\widehat{f}(t,\xi)_{rs}|^2+|\widehat{Xf}(t,\xi)_{rs}|^2dt\\
    &=(\operatorname{var}(a)+1)\|\nabla f\|_{L^2(\T^1\times G)}^2,
\end{align*}
where $\operatorname{var}(a)\vcentcolon=\max_{0\leq s\leq 2\pi}(a(s)-a_0)^2$. Moreover, one can show (see  \cite[proof of Proposition 5.1.2]{Kow_thesis}) that the restriction $\Psi^{-1}:(\ker {}^tY)^0\to (\ker {}^tY_0)^0$ is well defined and bijective. 

Note that under the identifications $\widehat{\T^1\times G}\sim \widehat{\T^1}\times \widehat{G}$ and  $\widehat{\T^1}\sim \Z$ we have that 
\begin{equation}\label{eq_symbol_y0}
    \sigma_{Y_0}(k,\xi)=ik\operatorname{Id}_{d_\xi}+a_0\sigma_X(\xi),
\end{equation}
for every $(k,\xi)\in\Z\times G$ (again see \cite{KWR_product} for more details).
Therefore, if \eqref{eq_diof_tube} holds, by Theorem \ref{theo_directional_poincare_general}  the vector field $Y_0$ satisfies the directional Poincaré inequality with exponent $\delta$ and some constant $c_0>0$, so by Remark \ref{remark_normal_ran_ker} for every $f\in (\ker {}^tY)^0$ we have that
\begin{align*}
    \|\nabla f\|_{L^2}^{\delta-1}\|Yf\|_{L^2}&\geq \frac{1}{(\operatorname{var}(a)+1)^{\frac{\delta-1}{2}}}\|\nabla (\Psi^{-1}f)\|_{L^2}^{\delta-1}\|\Psi^{-1}Y(\Psi \Psi ^{-1}f)\|_{L^2}\\
    &=\frac{1}{(\operatorname{var}(a)+1)^{\frac{\delta-1}{2}}}\|\nabla (\Psi^{-1}f)\|_{L^2}^{\delta-1}\|Y_0\Psi^{-1}f\|_{L^2}\\
    &\geq  \frac{c_0}{(\operatorname{var}(a)+1)^{\frac{\delta-1}{2}}}\|\Psi^{-1}f\|_{L^2}^\delta= \frac{c_0}{(\operatorname{var}(a)+1)^{\frac{\delta-1}{2}}}\|f\|_{L^2}^\delta.
\end{align*}
As also mentioned in Remark \ref{remark_normal_ran_ker}, $ (\ker {}^tY)^0= (\ker Y|_{H^1(\T^1\times G)})^\perp\cap C^\infty(\T^1\times G)$,
hence this space is dense in $(\ker Y|_{H^1(\T^1\times G)})^\perp$ with respect to the $H^1$ norm, and we conclude that inequality \eqref{ineq_poincare_variable_coef} holds with $c={c_0}/{(\operatorname{var}(a)+1)^{\frac{\delta-1}{2}}}$ for every $f\in (\ker Y|_{H^1(\T^1\times G)})^\perp$.

Conversely, assume that inequality \eqref{ineq_poincare_variable_coef} holds for some $c>0$, and every $f\in (\ker Y|_{H^1(\T^1\times G)})^\perp$. Note that by the same arguments used previously, we also have that
\begin{equation*}
    \|\nabla_{\T^1\times G}(\Psi f)\|_{L^2(\T^1\times G)}\leq (\operatorname{var}(a)+1)^{1/2}\|\nabla f\|_{L^2(\T^1\times G)}.
\end{equation*}
Hence, for every $f\in (\ker {}^tY_0)^0$, we have
\begin{align*}
    \|\nabla f\|_{L^2}^{\delta-1}\|Y_0f\|_{L^2}&\geq \frac{1}{(\operatorname{var}(a)+1)^{\frac{\delta-1}{2}}}\|\nabla (\Psi f)\|_{L^2}^{\delta-1}\|\Psi Y_0(\Psi^{-1} \Psi f)\|_{L^2}\\
    &=\frac{1}{(\operatorname{var}(a)+1)^{\frac{\delta-1}{2}}}\|\nabla (\Psi f)\|_{L^2}^{\delta-1}\|Y\Psi f\|_{L^2}\\
    &\geq  \frac{c}{(\operatorname{var}(a)+1)^{\frac{\delta-1}{2}}}\|\Psi f\|_{L^2}^\delta= \frac{c}{(\operatorname{var}(a)+1)^{\frac{\delta-1}{2}}}\|f\|_{L^2}^\delta.
\end{align*}
Once again by the fact that $ (\ker {}^tY_0)^0\subset (\ker Y_0|_{H^1(\T^1\times G)})^\perp$ is dense  with respect to the $H^1$ norm (see Remark \ref{remark_normal_ran_ker}), we conclude that the inequality above holds for every $f\in (\ker Y_0|_{H^1(\T^1\times G)})^\perp$.
 By Theorem \ref{theo_directional_poincare_general} and  the identifications $\widehat{\T^1\times G}\sim \widehat{\T^1}\times \widehat{G}$,  $\widehat{\T^1}\sim \Z$, as well as the formula for the symbol \eqref{eq_symbol_y0}, 
we conclude that \eqref{eq_diof_tube} must hold.
\end{proof}

\begin{corollary}\label{coro_tube_1}
    Let $Y$ be the vector field on $\T^1\times G$ given by \eqref{eq_tube_vector} and fix $\delta\geq 1$. Inequality \eqref{ineq_poincare_variable_coef} holds for some $c>0$ if and only if there exists $c_0>0$ such that 
    \begin{equation}\label{eq_directional_Y0}
        \|\nabla f\|_{L^2(\T^1\times G)}^{\delta-1}\| Y_0f\|_{L^2(\T^1\times G)}\geq c_0\|f\|_{L^2(\T^1\times G)}^\delta,\quad \forall f\in (\ker Y_0|_{H^1(\T^1\times G)})^\perp,
    \end{equation}
  where
    \begin{equation*}
        Y_0=\partial_t+a_0X,
    \end{equation*}
    with $a_0=\frac{1}{2\pi}\int_{0}^{2\pi}a(t)dt$,
     and we can take 
        \begin{equation*}
        c_0=\frac{c}{(\operatorname{var}(a)+1)^{\frac{\delta-1}{2}}}\quad\text{or}\quad c=\frac{c_0}{(\operatorname{var}(a)+1)^{\frac{\delta-1}{2}}},
    \end{equation*}
    if the inequalities hold.\\
    Moreover, the following statements are equivalent:
    \begin{enumerate}
        \item  Inequality \eqref{ineq_poincare_variable_coef} holds for some $\delta\geq 1$ and $c>0$;
        \item Inequality \eqref{eq_directional_Y0} holds for some $\delta\geq 1$ and $c_0>0$;
        \item $Y$ is globally solvable;
        \item $Y_0$ is globally solvable.
    \end{enumerate}
\end{corollary}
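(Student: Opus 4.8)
The plan is to assemble the corollary from the proof of Theorem~\ref{theo_poincare_tube} together with the global solvability characterization of Fourier multipliers obtained in Section~\ref{sect_solvability}; no genuinely new argument is needed, so I would mainly make explicit which pieces of the earlier proofs are being reused.

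First I would treat the equivalence of \eqref{ineq_poincare_variable_coef} and \eqref{eq_directional_Y0} with the stated relations between $c$ and $c_0$, for fixed $\delta\ge1$. This is already contained in the proof of Theorem~\ref{theo_poincare_tube}: the conjugating operator $\Psi$ is an $L^2$-isometry with $\Psi^{-1}Y\Psi=Y_0$ carrying $(\ker{}^tY)^0$ bijectively onto $(\ker{}^tY_0)^0$, hence (after $H^1$-closure, cf.\ Remark~\ref{remark_normal_ran_ker}) $(\ker Y|_{H^1})^\perp$ onto $(\ker Y_0|_{H^1})^\perp$, and it satisfies $\|\nabla(\Psi^{\pm1}g)\|_{L^2}\le(\operatorname{var}(a)+1)^{1/2}\|\nabla g\|_{L^2}$; the two chains of inequalities in that proof then convert a directional Poincaré inequality for $Y_0$ with constant $c_0$ into one for $Y$ with constant $c_0/(\operatorname{var}(a)+1)^{(\delta-1)/2}$, and conversely one for $Y$ with constant $c$ into one for $Y_0$ with constant $c/(\operatorname{var}(a)+1)^{(\delta-1)/2}$. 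I would simply restate this, noting in addition that $Y_0=\partial_t+a_0X$ is itself a left-invariant vector field on $\T^1\times G$ with symbol \eqref{eq_symbol_y0}, so Theorem~\ref{theo_directional_poincare_general} applies to $Y_0$ directly if the symbol formulation is preferred.

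For the four equivalences I would argue as follows. Quantifying the first part over $\delta\ge1$ gives $(1)\Leftrightarrow(2)$ at once. For $(2)\Leftrightarrow(4)$, since $Y_0$ is a left-invariant continuous linear operator on $\T^1\times G$, Proposition~\ref{prop_globally_solvable} combined with Corollary~\ref{coro_poincare_multipliers} (equivalently, the corollary stated immediately after Proposition~\ref{prop_globally_solvable}) shows that $Y_0$ is globally solvable if and only if \eqref{eq_directional_Y0} holds for some $c_0>0$ and some $\delta\ge1$. For $(3)\Leftrightarrow(4)$, I would use that $\Psi^{\pm1}$ are isomorphisms of $C^\infty(\T^1\times G)$ (and of $\mathcal{D}'(\T^1\times G)$) intertwining $Y$ and $Y_0$ and carrying $(\ker{}^tY)^0$ onto $(\ker{}^tY_0)^0$, as established in \cite{KWR_product,Kow_thesis} and already used in the proof of Theorem~\ref{theo_poincare_tube}: then a smooth solution of $Yu=f$ with $f\in(\ker{}^tY)^0$ is carried by $\Psi^{-1}$ to a smooth solution of $Y_0(\Psi^{-1}u)=\Psi^{-1}f$ with $\Psi^{-1}f\in(\ker{}^tY_0)^0$, and symmetrically, so $Y$ is globally solvable precisely when $Y_0$ is. Chaining $(1)\Leftrightarrow(2)\Leftrightarrow(4)\Leftrightarrow(3)$ completes the proof. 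The argument is essentially bookkeeping over already-proved facts; the only non-formal ingredient is that $\Psi$ is such an isomorphism preserving the relevant annihilator spaces and the $L^2$ norm, but this is supplied by \cite{KWR_product,Kow_thesis} and needs no new work here.
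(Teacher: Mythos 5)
Your proof is correct and is precisely the argument the paper leaves implicit (no explicit proof of Corollary~\ref{coro_tube_1} is given): the first assertion with the stated constants is read off from the two displayed chains of inequalities in the proof of Theorem~\ref{theo_poincare_tube}, the equivalence $(1)\Leftrightarrow(2)$ follows by quantifying over $\delta$, $(2)\Leftrightarrow(4)$ is the unnamed corollary after Proposition~\ref{prop_globally_solvable} applied to the Fourier multiplier $Y_0$ on $\T^1\times G$, and $(3)\Leftrightarrow(4)$ uses that $\Psi$ is a $C^\infty$-automorphism intertwining $Y$ and $Y_0$ and carrying $(\ker{}^tY)^0$ onto $(\ker{}^tY_0)^0$, as cited from \cite{KWR_product,Kow_thesis}. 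Nothing is missing; this is exactly the intended bookkeeping.
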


\begin{corollary}\label{coro_tube_t2}
     Let $a\in C^\infty(\T^1)$ be real-valued and consider the vector field on $\T^2$ given by
     \begin{equation*}
         Y=\partial_t+a(t)\partial_x.
     \end{equation*}
     There exist $\delta\geq 2$ and  $c_a>0$ such that
          \begin{equation}\label{ineq_coro_tube_t2}
        \|\nabla_{\T^2} f\|_{L^2(\T^2)}^{\delta-1}\| Y f \|_{L^2(\T^2)}\geq c_a\|f\|_{L^2(\T^2)}^\delta,
    \end{equation}
    for every $f\in H^1(\T^2)$ with mean-value zero, if and only if 
     $a_0=\frac{1}{2\pi}\int_0^{2\pi} a(t)dt$ is an irrational non-Liouville number. Moreover: in this case we can take $\delta$ greater then the irrationality measure of $a_0$, or equal to $2$ is $a_0$ is algebraic of degree $2$. 
\end{corollary}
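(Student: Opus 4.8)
The plan is to obtain this as a specialization of Theorem~\ref{theo_poincare_tube} with $G=\T^1$ and $X=\partial_x$, after rewriting condition \eqref{eq_diof_tube} as a Diophantine statement about $a_0$. Under the identification $\widehat{\T^1}\sim\Z$ every representation $\xi_n\colon x\mapsto e^{inx}$ is one-dimensional, the symbol of $X=\partial_x$ at $n$ equals $in$, so its single eigenvalue is $in$ and $\mu_1(n)=n$, while $\langle n\rangle=\sqrt{1+n^2}$; in particular the minimum over $r$ in \eqref{eq_diof_tube} is a single term. Hence for $Y=\partial_t+a(t)\partial_x$ the condition \eqref{eq_diof_tube} reads: there is $C>0$ with
\[
    |k+a_0 n|\ \geq\ C\,(|k|+\langle n\rangle)^{-(\delta-1)}\qquad\text{for all }(k,n)\in\Z^2\text{ with }k+a_0 n\neq 0.
\]
I would also record that when $a_0$ is irrational one has $k+a_0 n=0$ only for $(k,n)=(0,0)$, and that then $\ker Y|_{H^1(\T^2)}$ consists exactly of the constants: solving $\partial_t\widehat f(t,n)=-in\,a(t)\widehat f(t,n)$ in each $x$-frequency and imposing $2\pi$-periodicity in $t$ forces $\widehat f(0,n)=0$ for $n\neq 0$ because $e^{-2\pi i n a_0}\neq 1$, and then $\partial_t\widehat f(t,0)=0$ makes $f$ constant. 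Thus $(\ker Y|_{H^1(\T^2)})^\perp$ is exactly the space of mean-value-zero functions, so Theorem~\ref{theo_poincare_tube} applies directly to \eqref{ineq_coro_tube_t2}.

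For sufficiency, suppose $a_0$ is irrational and non-Liouville with irrationality measure $\mu=\mu(a_0)<\infty$ (so $\mu\geq 2$ by Dirichlet); fix any $\delta>\mu$ and set $\varepsilon=(\delta-\mu)/2>0$. If $|k+a_0 n|\geq 1$ the displayed inequality holds with $C=1$ since $(|k|+\langle n\rangle)^{-(\delta-1)}\leq 1$. If $0<|k+a_0 n|<1$ then $n\neq 0$, $|k|\leq(|a_0|+1)|n|$, hence $|n|\leq|k|+\langle n\rangle\leq(|a_0|+3)|n|$; writing $|k+a_0 n|=|n|\,|a_0-p/q|$ with $q=|n|$ and a suitable $p\in\Z$, the definition of the irrationality measure gives $|a_0-p/q|\geq q^{-(\mu+\varepsilon)}$ for all such pairs outside a finite set, so $|k+a_0 n|\geq|n|^{-(\mu+\varepsilon-1)}\geq(|k|+\langle n\rangle)^{-(\delta-1)}$ because $\mu+\varepsilon-1<\delta-1$. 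Absorbing the finitely many remaining pairs (on which $|k+a_0 n|>0$) into $C$ yields \eqref{eq_diof_tube}, and Theorem~\ref{theo_poincare_tube} then gives \eqref{ineq_coro_tube_t2} for every mean-value-zero $f$, with an explicit constant $c_a$. When $a_0$ is algebraic of degree $2$, Liouville's effective estimate $|a_0-p/q|\geq c\,q^{-2}$, valid for every $q\geq1$ with no exceptions, replaces this step and gives $|k+a_0 n|\geq c|n|^{-1}\geq c\,(|k|+\langle n\rangle)^{-1}$ in the second regime, i.e.\ \eqref{eq_diof_tube} with $\delta=2$.

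For necessity I would prove the contrapositive: if $a_0$ is rational or Liouville, then \eqref{ineq_coro_tube_t2} fails for every $\delta\geq 2$ and every $c_a>0$. If $a_0=p/q$ is rational in lowest terms (including $a_0=0$, $q=1$), the function $f(t,x)=e^{-iq\int_0^t a(\tau)\,d\tau}\,e^{iqx}$ is smooth on $\T^2$---periodicity in $t$ uses $q a_0=p\in\Z$---is not identically zero, has mean value zero, and satisfies $Yf=0$ by direct differentiation; hence the left-hand side of \eqref{ineq_coro_tube_t2} vanishes while the right-hand side is positive, for every $\delta$ and $c_a$ (take $\operatorname{Re}f$ for a real-valued example). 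If $a_0$ is an irrational Liouville number, then for a given $\delta$ I would choose, with exponent $m=\delta+2$, infinitely many rationals $p/q$ with $0<|a_0-p/q|<q^{-m}$ and set $(k,n)=(-p,q)$: then $0<|k+a_0 n|<q^{-(\delta+1)}$ while $|k|+\langle n\rangle\leq(|a_0|+3)q$, so $|k+a_0 n|\,(|k|+\langle n\rangle)^{\delta-1}\to 0$ along this sequence; hence \eqref{eq_diof_tube} fails and, by Theorem~\ref{theo_poincare_tube}, so does \eqref{ineq_coro_tube_t2} for some mean-value-zero $f$.

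The bulk of the effort is the Diophantine bookkeeping---keeping the direction of each inequality straight while passing between the one-variable quantity $|a_0-p/q|$ and the two-variable comparison of $|k+a_0 n|$ with $(|k|+\langle n\rangle)^{-(\delta-1)}$, and observing that the regime $|k+a_0 n|<1$ is exactly where the irrationality measure enters. The one genuinely subtle point is the rational case: there condition \eqref{eq_diof_tube} actually \emph{holds}, because the nonzero values of $|k+a_0 n|$ are bounded below by $1/q$, so the failure of \eqref{ineq_coro_tube_t2} cannot be read off Theorem~\ref{theo_poincare_tube}; instead it comes from the fact that for rational $a_0$ the kernel $\ker Y|_{H^1(\T^2)}$ is strictly larger than the constants, so the space of mean-value-zero functions properly contains $(\ker Y|_{H^1(\T^2)})^\perp$ and the inequality is destroyed on those extra kernel directions.
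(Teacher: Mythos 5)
Your proof is correct, and it takes a meaningfully different route from the paper's. The paper's proof of this corollary is modular: for sufficiency it invokes Corollary~\ref{coro_irrational} for $Y_0=\partial_t+a_0\partial_x$ and transfers back to $Y$ through Corollary~\ref{coro_tube_1} together with the observation (their display \eqref{eq_mean_value}) that the gauge transformation $\Psi^{-1}$ preserves the mean value; for necessity it shows that the inequality on mean-value-zero functions forces $M_0\subset(\ker Y|_{H^1})^\perp$, pushes this through $\Psi$ to conclude $M_0\subset(\ker Y_0|_{H^1})^\perp$ (whence $a_0$ is irrational), and then cites Corollary~\ref{coro_t2} to conclude $a_0$ is non-Liouville. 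You instead work directly with Theorem~\ref{theo_poincare_tube}: you compute $\ker Y|_{H^1(\T^2)}$ by solving the ODE for the $x$-frequency coefficients and observe that for irrational $a_0$ it reduces to the constants, so $(\ker Y|_{H^1})^\perp$ is precisely the mean-value-zero space and the theorem applies verbatim; you then carry out the Diophantine verification of \eqref{eq_diof_tube} inline rather than quoting Corollary~\ref{coro_t2}. Your necessity argument is also structured differently: for Liouville $a_0$ you exhibit a sequence $(k,n)=(-p,q)$ that kills the symbol estimate, while for rational $a_0$ you bypass the symbol condition entirely (correctly noting that \eqref{eq_diof_tube} actually \emph{holds} there, with $C=1/q$) and instead produce the explicit nonconstant kernel element $f(t,x)=e^{-iq\int_0^t a}\,e^{iqx}$, which has mean value zero and annihilates the left-hand side of \eqref{ineq_coro_tube_t2}. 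That last point is genuinely worth making explicit — the corollary is \emph{not} a direct restatement of Theorem~\ref{theo_poincare_tube}, precisely because the hypothesis space (mean-value zero) strictly contains $(\ker Y|_{H^1})^\perp$ when $a_0$ is rational — and your treatment surfaces this more clearly than the paper's chain of implications does. Both proofs are valid; yours is more self-contained and constructive, the paper's is shorter by leaning on its preceding corollaries.
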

\begin{proof}
Suppose first that $a_0$ is an irrational non-Liouville number. From Corollary \ref{coro_irrational} we conclude that a directional Poincaré inequality holds for $Y_0$ for every $f\in H^1(\T^2)$ with mean-value zero. Now let $f\in (\ker {}^tY)^0$. As in the proof of Theorem \ref{theo_poincare_tube}, we have that $\Psi^{-1}f\in (\ker {}^tY_0)^0$, which coincides with $(\ker Y_0|_{H^1(\T^2)})^\perp\cap C^\infty(\T^2)$, by Remark \ref{remark_normal_ran_ker}. As in the proof of Corollary \ref{coro_irrational}, this implies that $\widehat{\Psi^{-1}f}(0,0)=0$, that is: 
\begin{align}\label{eq_mean_value}
    0&=\widehat{\Psi^{-1}f}(0,0)=\frac{1}{(2\pi)}\int_0^{2\pi}\widehat{{\Psi^{-1}f}}(t,0)dt\notag\\
    &=\frac{1}{(2\pi)}\int_0^{2\pi}\widehat{{f}}(t,0)dt=\frac{1}{(2\pi)^2}\int_0^{2\pi}\int_0^{2\pi}f(t,x)dtdx,
\end{align}
so every $f\in (\ker {}^tY)^0$ has mean-value zero. From the density of $(\ker {}^tY)^0\subset (\ker Y|_{H^1(\T^2)})^\perp$, we conclude that this property also holds for any $f\in (\ker Y|_{H^1(\T^2)})^\perp$. Inequality \eqref{ineq_coro_tube_t2} and then follows Corollary \ref{coro_tube_1} ({\it (2)}$\implies ${\it (1)}). The other claims about the exponent also follow from Corollary \ref{coro_irrational}.

Next suppose that there exist $\delta\geq 2$ and $c_a>0$ such that inequality \eqref{ineq_coro_tube_t2} holds for every $f\in H^1(G)$ with mean value zero. This implies that the subspace $\{f\in H^1(G): f \text{ has mean value zero}\}$ is contained in $(\ker Y|_{H^1(\T^2)})^\perp$, by Remark \ref{remark_perp}. In particular, by Remark \ref{remark_normal_ran_ker}, we have that $M_0\vcentcolon=\{f\in C^\infty(\T^2): f \text{ has mean value zero}\}\subset (\ker {}^tY)^0=(\ker Y|_{H^1(\T^2)})^\perp\cap C^\infty(\T^2)$. 
 Following the notation and arguments in the proof of Theorem \ref{theo_poincare_tube}, this implies that there exists $c_0>0$ such that for any $f\in \Psi^{-1}M_0$, $f\in  (\ker {}^tY_0)^0$, inequality 
\begin{equation}\label{ineq_Y0_proof}
     \|\nabla_{\T^2} f\|_{L^2(\T^2)}^{\delta-1}\| Y_0 f \|_{L^2(\T^2)}\geq c_0\|f\|_{L^2(\T^2)}^\delta,
\end{equation}
holds, where
    \begin{equation*}
        Y_0=\partial_t+a_0\partial_x.
    \end{equation*}
    But by identities \eqref{eq_mean_value}, $\Psi^{-1}M_0=M_0$, so Remark \ref{remark_perp} implies that $M_0 $ is a subspace of $(\ker Y_0|_{H^1(G)})^\perp$, and we conclude that $a_0$ must be irrational. But then $M_0=(\ker {}^tY_0)^0$, so by density we conclude that \eqref{ineq_Y0_proof} holds for every $f\in (\ker Y_0|_{H^1(\T^2)})^\perp$. It then follows from Corollary \ref{coro_t2} that $a_0$ must be a non-Liouville number.
\end{proof}

\section{Acknowledgements}
The authors thank Professor Noam D. Elkies who provided the ideas for the proof of Lemma \ref{lemma_tridiagonal_eig} (see ``Independence of parameter for eigenvalues of periodic family of
tridiagonal matrices", MathOverflow; URL \url{https://mathoverflow.net/q/498252}).

The first author was supported in part by FAPESP (grants 2024/08416-6 and 2024/12753-8) and CNPq (grant 313581/2021-5). The second author was supported by FAPESP (grant 2025/08151-5).
\bibliographystyle{elsarticle-num} 
  \bibliography{references.bib}

\begin{thebibliography}{10}
\expandafter\ifx\csname url\endcsname\relax
  \def\url#1{\texttt{#1}}\fi
\expandafter\ifx\csname urlprefix\endcsname\relax\def\urlprefix{URL }\fi
\expandafter\ifx\csname href\endcsname\relax
  \def\href#1#2{#2} \def\path#1{#1}\fi

\bibitem{directional_Poincare}
S.~Steinerberger, Directional {Poincar{\'e}} inequalities along mixing flows,
  Ark. Mat. 54~(2) (2016) 555--569.

\bibitem{GW_GH_vector_fields}
N.~R.~W. S.~J.~Greenfield, Globally hypoelliptic vector fields, Topology 12~(3)
  (1973) 247--253.

\bibitem{RT2010_book}
M.~Ruzhansky, V.~Turunen, Pseudo-differential operators and symmetries.
  {Background} analysis and advanced topics, Vol.~2 of Pseudo-Differ. Oper.,
  Theory Appl., Basel: Birkh{\"a}user, 2010.

\bibitem{strongly_invariant_KW}
A.~Kirilov, W.~A.~A. de~Moraes, Global hypoellipticity for strongly invariant
  operators, J. Math. Anal. Appl. 486~(1) (2020) 123878.

\bibitem{tese-Nicholas}
N.~B. Rodrigues, Classes de {G}evrey em grupos de {L}ie compactos e
  aplicações, Master's thesis, Universidade de São Paulo, Brazil (2016).

\bibitem{Perron}
O.~Perron, {\"U}ber diophantische {Approximationen}., Math. Ann. 83 (1921)
  77--84.

\bibitem{Roth1955}
K.~F. Roth, Rational approximations to algebraic numbers, Mathematika 2~(1)
  (1955) 1--20.

\bibitem{kk_loss}
A.~Pedroso~Kowacs, A.~Kirilov, Global {Hypoellipticity} and {Solvability} with
  {Loss} of {Derivatives} on the {Torus}, To appear in: J. Funct. Anal.
  Preprint, {arXiv}:2503.17466 [math.{AP}] (2025).

\bibitem{araujo_compact}
G.~Ara{\'u}jo, Global regularity and solvability of left-invariant differential
  systems on compact {Lie} groups, Ann. Global Anal. Geom. 56~(4) (2019)
  631--665.

\bibitem{strongly_inv_solvable}
A.~Kirilov, W.~A.~A. de~Moraes, P.~M. Tokoro, Global solutions for systems of
  strongly invariant operators on closed manifolds, Preprint,
  {arXiv}:2502.18660 [math.{AP}] (2025).

\bibitem{diagonal_systems}
P.~L. Dattori~da Silva, A.~Kirilov, R.~P. da~Silva, Diagonal systems of
  differential operators on compact {Lie} groups, Result. Math. 80~(6) (2025)
  25, id/No 191.

\bibitem{KWR_product}
A.~Kirilov, W.~{de Moraes}, M.~Ruzhansky, Global hypoellipticity and global
  solvability for vector fields on compact lie groups, J. Funct. Anal. 280~(2)
  (2021) 108806.

\bibitem{Kow_thesis}
A.~Pedroso~Kowacs, Links between regularity and inequalities on compact lie
  groups, Phd thesis, Universidade Federal do Paraná, Brazil, available at
  \url{https://acervodigital.ufpr.br/xmlui/handle/1884/94503} (2024).

\end{thebibliography}
\end{document}